\documentclass[a4paper]{article}
\usepackage[all]{xy}
\usepackage{amsmath, amssymb, amsthm}
\usepackage{latexsym, color}
\usepackage[dvipdfmx, hiresbb]{graphicx}
\usepackage{float}

\newtheorem{thm}{Theorem}[section]
\newtheorem{prop}[thm]{Proposition}
\newtheorem{lem}[thm]{Lemma}
\newtheorem{defn}[thm]{Definition}

\newtheorem{cor}[thm]{Corollary}

\title{On Euclidean systems of ray classes}
\author{Yutaro Matsuno}

\newcommand{\n}{\normalfont}
\newcommand{\ds}{\displaystyle}
\newcommand{\p}{\mathfrak{p}}

\newcommand{\ded}{\mathfrak{o}}

\newcommand{\id}{\text{id}}
\newcommand{\m}{\mathfrak{m}}

\begin{document}

\maketitle

\begin{abstract}
Lenstra introduced the notion of Euclidean ideal classes, and Treatman extended it to Euclidean systems. In this paper, we formulate Euclidean systems for ray classes, and study their basic properties. In particular, we show that every Euclidean system of ray classes generates the corresponding ray class group. We further prove, assuming GRH, that if $K$ is a totally real Galois number field of degree $n\ge 3$ and $p$ is an odd rational prime which does not split completely in $K$, then for every $N>0$, every generating set of the ray class group $Cl_K^{(p)^N}$ with modulus $(p)^N$ is a Euclidean system.
\end{abstract}

\section{Introduction}

Euclidean domains are classical objects of interest in algebraic number theory. Every Euclidean domain is a PID, and hence the question of whether the ring of integers of a number field is Euclidean is closely related to questions concerning the class number and principality. Motzkin\cite{Motzkin1949} interpreted the Euclidean algorithm in terms of a set-theoretic filtration and proved that every Euclidean domain admits a minimal Euclidean function. He also showed that, for the ring of integers of a number field, the existence of an ordinal-valued Euclidean function is equivalent to the existence of a Euclidean function with values in the natural numbers.

Lenstra\cite{Lenstra1979} extended this framework from rings of integers to ideal classes and introduced the notion of a Euclidean ideal class. This notion formulates Euclidean division not only in the principal ideal class, but also in all ideal classes. From this point of view, it becomes possible to study Euclidity for elements of the class group of a Dedekind domain.

Treatman\cite{Treatman1998} further generalized this notion by formulating Euclidity for a set of ideal classes, rather than for a single ideal class, and introduced the notion of a Euclidean system. This notion is a natural multi-class generalization of Euclidean ideal classes and clarifies the relation between generating sets of the class group and the Euclidean algorithm.

The ideal class group is the ray class group of trivial modulus. It is therefore natural to extend Treatman's Euclidean systems to the setting of ray class groups. In this paper, we formulate Euclidean systems for ray classes and study their basic properties. This definition agrees with Treatman's definition when the modulus is trivial.

The first result of this paper is a ray class analogue of Treatman's theorem. The precise definitions will be given later, but the main point can be stated as follows.
\\

\begin{thm}\label{11} \n
Let \(K\) be a number field, \(\m\) a modulus of \(K\), and \(Cl_K^{\m}\) the ray class group modulo \(\m\). If a subset \(\mathcal{C}\subset Cl_K^{\m}\) is a Euclidean system of ray classes, then \(\mathcal{C}\) generates \(Cl_K^{\m}\).
\\
\end{thm}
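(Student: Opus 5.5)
The plan is to adapt the minimal-counterexample argument that Lenstra used to show a Euclidean ideal class generates the class group, and that Treatman used for Euclidean systems, to ray classes. Let $H\subseteq Cl_K^{\m}$ be the subgroup generated by $\mathcal{C}$, and suppose for contradiction that $H\neq Cl_K^{\m}$. The definition of a Euclidean system of ray classes provides, for each class in $\mathcal{C}$, a fixed representative $C$: an integral ideal coprime to $\m$. It also provides a function $\psi$ from integral ideals coprime to $\m$ to a well-ordered set. Since $H$ is a proper subgroup, the set of integral ideals coprime to $\m$ whose ray class lies outside $H$ is nonempty. By well-ordering we pick such an ideal $I$ with $\psi(I)$ minimal.

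Next we apply the Euclidean division property to $I$. I expect the definition to say roughly the following. For every integral ideal $I$ coprime to $\m$ and every $x\in I^{-1}$ with $x\notin \ded_K$, subject to a suitable congruence condition modulo $\m$, there exist $C\in\mathcal{C}$ and $y\in C^{-1}$ such that $y\equiv 1 \pmod{^{\times}\m}$ in the sense of the ray group, $(x-y)IC\neq 0$ is integral, and
\[
\psi\bigl((x-y)IC\bigr)<\psi(I).
\]
The exact placement of $C$ versus $C^{-1}$ will follow the paper's convention. We choose an admissible $x$; producing one is a separate task discussed below. Put $I'=(x-y)IC$. Provided $x-y$ lies in the ray $K_{\m,1}$, the ray class of $I'$ equals $[I][C]$. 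Since $\psi(I')<\psi(I)$, minimality forces $[I']\in H$. Because $[C]\in H$, we get $[I]=[I'][C]^{-1}\in H$, which is a contradiction. Hence $H=Cl_K^{\m}$.

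The main obstacle is the second step, and it has two parts. First we must check that the ray version of the definition makes $x-y$ (or the appropriate quotient) satisfy $x-y\equiv 1 \pmod{^{\times}\m}$, including the sign conditions at the real places of $\m$, so that $I'$ lies in the correct ray class and not merely the correct ideal class. Second, we must show that an admissible $x$ actually exists, namely $x\in I^{-1}\setminus\ded_K$ with the required congruence modulo $\m$. If $I\neq\ded_K$, then $I^{-1}\supsetneq \ded_K$, and by the Chinese remainder theorem we can adjust any element of $I^{-1}\setminus\ded_K$ by an element of $I^{-1}\m$ to meet the congruence condition while staying outside $\ded_K$. The positivity conditions at real places can be arranged by adding a large positive rational integer divisible enough by $\m$.

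The degenerate case $I=\ded_K$ cannot occur, since the trivial class lies in $H$. If the definition is instead phrased with $\psi$ defined on all ideals and $x$ ranging over $K$, I would rerun the same argument with $x$ chosen in $K\setminus I$ and the ray condition imposed on the corresponding principal ideal.
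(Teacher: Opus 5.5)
Your argument is essentially the paper's proof of Theorem~\ref{49}(i): a well-founded descent along the Euclidean function, in which each step replaces $\mathfrak b$ by an ideal $\mathfrak x_i\mathfrak b$ of ray class $[\mathfrak b]C_i^{-1}$ (the paper iterates this step using the minimal function $\varphi_{\mathcal C}$ from Proposition~\ref{47}, which also gives the bound $\sum n_i\le\varphi_{\mathcal C}(\mathfrak b_0)$; your minimal-counterexample form works directly with any ordinal-valued Euclidean function). The two points you flag as the main obstacle are built into the paper's ideal-theoretic definition, whose moduli are finite so no sign conditions arise: $\mathfrak x_i\in S_{\m}(\tilde{\mathfrak c}_i)\subset\tilde{\mathfrak c}_iP_K^{\m}$ with $[\tilde{\mathfrak c}_i]=C_i^{-1}$ fixes the ray class automatically, and a test input is just an element of $I(\mathcal C,\mathfrak b)$, which is nonempty for $\mathfrak b\neq(1)$ (take pairwise coprime $\mathfrak c_i\in\text{Int}(C_i)$ prime to $\mathfrak b$, and $\tilde{\mathfrak d}=\mathfrak a\mathfrak p^{-1}$ with $\mathfrak p\mid\mathfrak b$ and $\mathfrak a\in\text{Int}([\mathfrak p](C_1\cdots C_r)^{-1})$ prime to $\mathfrak p$).
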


There is also a deep existence theorem due to Lenstra\cite{Lenstra1977} concerning Euclidean ideal classes. Using Artin's generalized primitive root conjecture, which holds under GRH, Lenstra proved that a generator of the ideal class group of the ring of integers is a Euclidean ideal class.

Treatman extended this result to the setting of Euclidean systems. More precisely, he decomposed the ideal class group as
\[
Cl_K\simeq \bigoplus_{i=1}^r \mathbb{Z}/e_i\mathbb{Z}~~(e_i|e_{i+1})
\]
and proved, under the assumption that the rank of the unit group is at least \(\max\{1,r-1\}\), that the set of ideal classes corresponding to the standard generators \((1~\text{mod}~e_i)\) of the right-hand side is a Euclidean system. Here, the invariant factor condition \(e_i\mid e_{i+1}\) gives a standard presentation in which the number \(r\) of generators is minimal; it does not play an essential role in the construction of the Euclidean system itself.

The second main result of this paper gives a ray class analogue of Treatman's existence theorem. Its precise formulation uses the Motzkin filtration associated with a subset of ray classes, the partial minimal Euclidean function on it, and an admissibility condition. These notions will be introduced later. Roughly speaking, the admissibility condition ensures that sufficiently many prime ideals lie in the first layer of Motzkin's filtration. Under this condition, and assuming GRH, we prove that generating sets of ray class groups are Euclidean systems.

In particular, we obtain the following sufficient condition.
\\

\begin{thm}\label{12} \n
Assume GRH. Let \(K\) be a totally real Galois number field of degree \(n\ge 3\), and let \(p\) be an odd rational prime which does not split completely in \(K\). Then, for every \(N>0\), any generating set of \(Cl_K^{(p)^N}\) is a Euclidean system of ray classes.
\\
\end{thm}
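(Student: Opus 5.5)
The plan is to deduce Theorem~\ref{12} from the general existence theorem (the ray class analogue of Treatman's theorem). That result, proved later under GRH, says that a generating set of $Cl_K^{\m}$ is a Euclidean system whenever the admissibility condition holds. So the work consists in verifying admissibility for $\m=(p)^N$ under the stated hypotheses. Concretely, admissibility should amount to this: for each class $C$ in the generating set $\mathcal{C}$, a set of prime ideals of positive (Dirichlet) density lying in $C$ belongs to the first layer of the Motzkin filtration. Such a prime $\mathfrak{q}$ is one for which every residue class of $\ded/\mathfrak{q}$ (suitably twisted by the ray condition mod $\m$) is represented by a unit congruent to $1$ mod $\m$, or by zero. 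In Lenstra's approach this reduces to requiring that the image of the unit group $E_{\m}=\{u\in\ded^\times : u\equiv 1 \bmod \m\}$ in $(\ded/\mathfrak{q})^\times$ be everything. That is, $\mathfrak{q}$ should be a ``primitive root prime'' for the group $E_{\m}$.

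The key steps, in order, are as follows.

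First, note that $K$ is totally real of degree $n$, so $\ded^\times$ has rank $n-1\ge 2$. The subgroup $E_{\m}$ has finite index in $\ded^\times$, so it also has rank $n-1\ge 2$. Treatman's rank hypothesis $\max\{1,r-1\}$ gets replaced by a condition we can satisfy directly: rank at least $2$ suffices for the multi-class version of Artin's conjecture.

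Second, invoke the GRH form of Artin's generalized primitive root conjecture for number fields (Lenstra~\cite{Lenstra1977}). It gives the density of degree-one primes $\mathfrak{q}$ with $(\ded/\mathfrak{q})^\times = \text{image of } E_{\m}$ and with prescribed Frobenius/ray class in a finite abelian extension, here the ray class field $K(\m)$. This density is an Euler-product expression times a correction coming from the intersection of the Kummer fields $K(\zeta_\ell, E_{\m}^{1/\ell})$ with $K(\m)$. We must show this density is positive for every ray class $C$.

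Third, verify that this correction does not vanish. This is where the hypotheses enter. Oddness of $p$ together with $K$ totally real means $-1\notin E_{\m}$ whenever $p^N\ne 2$ and the modulus has no infinite part. This avoids the obstruction at $\ell=2$ coming from $\sqrt{-1}$. The condition that $p$ does not split completely, together with the Galois hypothesis, is used to show that $K(\m)\cap K(\zeta_\ell,E_{\m}^{1/\ell})$ is small enough that it does not force every prime in some ray class to have non-surjective unit image. Galois makes all primes above $p$ conjugate, so the condition is uniform. The degree $n\ge3$ supplies rank $\ge 2$, which is needed for the error-term/sieve argument when multiple classes are treated.

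The main obstacle will be this third step: computing the entanglement between the ray class field $K(\m)$ and the Kummer extensions generated by roots of units in $E_{\m}$. The wild part at $\ell=p$ is especially delicate, since $K(\m)$ contains $p$-power ramified pieces that could coincide with $K(\zeta_{p},E_{\m}^{1/p})$. I would first try to show that $\zeta_p\notin K(\m)$ unless $p$ splits suitably. Using that $K$ is totally real, $K(\zeta_p)\cap K(\m)$ is constrained by the ramification at $p$ and the non-split hypothesis. Given that, one can bound the intersection so the Chebotarev-weighted Artin density remains positive for all classes. Finally, feed this positivity into the general theorem to conclude that every generating set of $Cl_K^{(p)^N}$ is a Euclidean system.
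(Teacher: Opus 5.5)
\paragraph{Gap: the density target in your third step is false in general.} You want the density of primes $\mathfrak q\in C$ with $E^{\mathfrak m}\to\kappa_{\mathfrak q}^\ast$ surjective to be positive for every ray class $C$, or at least for every $C\in\mathcal C$. Nothing in the hypotheses excludes $K(\sqrt{E^{\mathfrak m}})\subset H_K^{\mathfrak m}$, i.e.\ $P_1=\emptyset$ and $2\in P_2$. This only asks that $K(\sqrt{E^{\mathfrak m}})$ lie in the Hilbert class field of $K$, and the paper's proof has to treat exactly this case. When it occurs, every prime $\mathfrak q\nmid 2$ in the principal ray class splits completely in $K(\sqrt{E^{\mathfrak m}})$. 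Hence $E^{\mathfrak m}$ lands in the squares of $\kappa_{\mathfrak q}^\ast$, and the principal class is not admissible. Yet it may belong to a generating set (e.g.\ $\mathcal C=Cl_K^{\mathfrak m}$). So $-1\notin E^{\mathfrak m}$ does not remove the obstruction at $\ell=2$, and no bound on the entanglement can give positivity for all classes.

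You also misassign the roles of the hypotheses. Lenstra's theorem needs only $E^{\mathfrak m}$ infinite, and $n\ge 3$ is not a rank condition for a sieve. Also, $\zeta_p\notin H_K^{\mathfrak m}$ holds regardless of how $p$ splits, simply because $H_K^{\mathfrak m}$ is totally real.

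\paragraph{Missing idea: one admissible class suffices, and generation supplies it.} Theorem~\ref{57} needs only $C_1$ admissible. Lemma~\ref{55} shows that admissibility of $C$ survives passage to the modulus $\mathfrak p\mathfrak m$, because no new Kummer obstruction appears. So every suitable prime in $CC'$ reaches $A_{C,1}$ in one step, and the completely additive $\theta$ is built from this layering. Theorem~\ref{58} then finds an admissible $C_i$ in four steps:
\begin{enumerate}
\item Compare the $\ell$-rank of $E_K/E_K^\ell$ (which is $n-1$, or $n$ for $\ell=2$) with that of $(\ded_K/\mathfrak m)^\ast$. The latter is at most $g\le n/2<n-1$ for $\ell\ne p$, by the Galois and non-splitting hypotheses. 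This gives $P_1\subset\{p\}$.
\item Abelianness plus total reality gives $P_2\subset\{2,p\}$.
\item If $2\in P_2$, then $K(\sqrt{E^{\mathfrak m}})\cap H_K^{\mathfrak m}\ne K$. This is immediate if $P_1=\emptyset$. If $P_1=\{p\}$, it holds because $\text{Gal}(K(\sqrt{E^{\mathfrak m}})/K)$ has $2$-rank at least $n-g>1$, while $\text{Gal}(H_K^{\mathfrak m}(\zeta_p)/H_K^{\mathfrak m})$ is cyclic. Since $\mathcal C$ generates, some $\sigma_{C_i}$ is nontrivial on this intersection.
\item The condition at $\zeta_p$ is arranged by replacing a lift $\tilde\sigma_i$ with $J\tilde\sigma_i$, where $J$ is complex conjugation.
\end{enumerate}
Your outline contains neither the reduction to a single admissible generator nor this rank computation, so as it stands it does not prove the theorem.
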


The paper is organized as follows. In Section 2, we recall the notation and basic facts concerning Euclidean ideal classes, Euclidean systems, and ray class groups. In Section 3, we define Euclidean systems for ray classes and explain their compatibility with Treatman's definition. In Section 4, we introduce the Motzkin filtration, establish basic results on Euclidean systems, and finally prove that every Euclidean system generates the corresponding ray class group. In Section 5, we invoke Lenstra's theorem and prove an existence theorem for Euclidean systems under GRH.
\\

\section{Preliminaries}

Throughout this paper, \(\mathbb{N}\) denotes the set of non-negative integers, and \(\mathbf{ON}\) denotes the proper class of all ordinal numbers. Let \(R\) be a Dedekind domain with ideal class group \(Cl(R)\). Let \(K\) be a number field with ring of integers \(\ded_K\) and unit group \(E_K\). Let \(\mathfrak{m}\) be a finite modulus of \(K\). We denote by \(J_K^{\mathfrak{m}}\) the group of fractional ideals of \(K\) which are relatively prime to \(\mathfrak{m}\), by \(K^{\mathfrak{m}}\subset K^\ast\) the subgroup of all elements congruent to \(1\) modulo \(\mathfrak{m}\) and by \(P_K^{\mathfrak{m}}\) the subgroup of principal fractional ideals generated by elements congruent to \(1\) modulo \(\mathfrak{m}\). We write
\[
Cl_K^{\m}=J_K^{\m}/P_K^{\m}
\]
for the ray class group of \(K\) modulo \(\m\). We denote by \(\text{Int}(J_K^{\m})\) the set of integral ideals belonging to \(J_K^{\m}\). For a ray class \(C\in Cl_K^{\m}\), we write
\[
\text{Int}(C)=\{\mathfrak{a}\in C~|~\mathfrak{a}\text{ is integral}\}.
\]
We also write \(E^{\m}\) for the group of units congruent to \(1\) modulo \(\m\).

We first recall the definition of Euclidean ideal classes.
\\

\begin{defn}\label{21} \n
Let \(C=[\mathfrak{c}]\in Cl(R)\), where \(\mathfrak{c}\) is an integral ideal. A function
\[
\varphi:\{\text{non-zero integral ideals of }R\}\to \mathbb{N}
\]
is called a \textit{Euclidean function} for \(C\) if, for every integral ideal \((1)\neq \mathfrak{b}\) and every
\[
\gamma\in \mathfrak{b}^{-1}\mathfrak{c}-\mathfrak{c},
\]
there exists \(q\in \mathfrak{c}\) such that
\[
\varphi(\mathfrak{b}\mathfrak{c}^{-1}(\gamma-q))<\varphi(\mathfrak{b}).
\]
If such a function exists, then \(C\) is called a \textit{Euclidean ideal class}.
\\
\end{defn}

This definition is independent of the choice of the integral ideal \(\mathfrak{c}\in C\).

Treatman extended this notion to Euclidean systems.
\\

\begin{defn}\label{22} \n
Let \(\mathcal{C}=\{C_1,\dots,C_r\}\subset Cl(R)\) be a subset of ideal classes. Choose pairwise coprime integral ideals \(\mathfrak{c}_i\in C_i\), and put
\[
\mathfrak{c}=\prod_{i=1}^r \mathfrak{c}_i.
\]
A function
\[
\varphi:\{\text{non-zero integral ideals of }R\}\to \mathbb{N}
\]
is called a \textit{Euclidean function for \(\mathcal{C}\)} if, for every integral ideal \((1)\neq \mathfrak{b}\) and every
\[
\gamma\in \mathfrak{b}^{-1}\mathfrak{c}-\mathfrak{c},
\]
there exist \(i\in\{1,\dots,r\}\) and \(q_i\in \mathfrak{c}_i\) with \(q_i\neq \gamma\) such that
\[
\varphi(\mathfrak{b}\mathfrak{c}_i^{-1}(\gamma-q_i))<\varphi(\mathfrak{b}).
\]
If such a function exists, then \(\mathcal{C}\) is called a \textit{Euclidean system}.
\\
\end{defn}

This definition is also independent of the choice of the integral ideals \(\mathfrak{c}_i\in C_i\), but it requires the coprime condition.

The following result is Treatman's extension of Lenstra's theorem for Euclidean ideal classes.
\\

\begin{thm}\label{23} \n
Let \(\mathcal{C}=\{C_1,\dots,C_r\}\subset Cl_K\) be a Euclidean system and let \(\varphi\) be a Euclidean function for \(\mathcal{C}\). Then, for any \(B\in Cl_K\), there exist integers \(n_1,\dots,n_r\in\mathbb{N}\) such that
\[
0\le \sum_{i=1}^r n_i\le\min\{\varphi(\mathfrak{b})~|~\mathfrak{b}\in B,~\mathfrak{b}\text{ is integral}\}
\]
and
\[
B=\prod_{i=1}^r C_i^{n_i}.
\]
In particular, \(\mathcal{C}\) generates the ideal class group.
\\
\end{thm}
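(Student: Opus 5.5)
Fix $B\in Cl_K$ and an integral ideal $\mathfrak{b}\in B$ with $\varphi(\mathfrak{b})$ minimal among integral ideals of $B$. The argument is an induction on $m=\varphi(\mathfrak{b})$ (strong induction over all classes simultaneously). The claim to prove is: for every integral $\mathfrak{b}$ there are $n_i\in\mathbb{N}$ with $\sum n_i\le\varphi(\mathfrak{b})$ and $[\mathfrak{b}]=\prod C_i^{n_i}$. Minimizing over $\mathfrak{b}\in B$ then gives the statement.

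\emph{Base case and induction step.} If $\mathfrak{b}=(1)$, take all $n_i=0$. Now suppose $\mathfrak{b}\neq(1)$ is integral and the claim holds for all integral ideals $\mathfrak{b}'$ with $\varphi(\mathfrak{b}')<\varphi(\mathfrak{b})$. We need some $\gamma\in\mathfrak{b}^{-1}\mathfrak{c}-\mathfrak{c}$. Since $\mathfrak{b}\ne(1)$ is integral, $\mathfrak{b}^{-1}\mathfrak{c}\supsetneq\mathfrak{c}$ by unique factorization, so such $\gamma$ exists. Definition~\ref{22} then gives an index $i$ and $q_i\in\mathfrak{c}_i$ with $q_i\neq\gamma$ and $\varphi(\mathfrak{b}')<\varphi(\mathfrak{b})$, where $\mathfrak{b}'=\mathfrak{b}\mathfrak{c}_i^{-1}(\gamma-q_i)$.

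\emph{Integrality of $\mathfrak{b}'$.} This is the step to be careful about. Since $\mathfrak{c}\subset\mathfrak{c}_i$, we have $\gamma\in\mathfrak{b}^{-1}\mathfrak{c}\subset\mathfrak{b}^{-1}\mathfrak{c}_i$. Also $q_i\in\mathfrak{c}_i\subset\mathfrak{b}^{-1}\mathfrak{c}_i$, because $\mathfrak{b}^{-1}\supset R$. Hence $\gamma-q_i\in\mathfrak{b}^{-1}\mathfrak{c}_i$, and it is nonzero since $q_i\neq\gamma$. Therefore $(\gamma-q_i)\subset\mathfrak{b}^{-1}\mathfrak{c}_i$, so $\mathfrak{b}'=\mathfrak{b}\mathfrak{c}_i^{-1}(\gamma-q_i)$ is a nonzero integral ideal. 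Its class is $[\mathfrak{b}']=B\,C_i^{-1}$, because $(\gamma-q_i)$ is principal.

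\emph{Conclusion.} By induction, $BC_i^{-1}=\prod_j C_j^{n_j'}$ with $\sum n_j'\le\varphi(\mathfrak{b}')\le\varphi(\mathfrak{b})-1$. Setting $n_i=n_i'+1$ and $n_j=n_j'$ for $j\ne i$ gives $B=\prod C_j^{n_j}$ with $\sum n_j\le\varphi(\mathfrak{b})$.

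The main obstacle is only the integrality check above: $\gamma$ lies in $\mathfrak{b}^{-1}\mathfrak{c}$ rather than in $\mathfrak{b}^{-1}\mathfrak{c}_i$ directly, and one must use $\mathfrak{c}\subset\mathfrak{c}_i$ to pass between them. Generation follows because every class contains an integral ideal. Since $\varphi$ takes values in $\mathbb{N}$, the induction is well-founded.
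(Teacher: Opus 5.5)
Your proof is correct and follows essentially the paper's argument. The paper only cites Theorem 2.3 from Treatman, but its proof of the ray-class analogue, Theorem 4.9(i), is the same descent: repeatedly apply the Euclidean condition to pass from $\mathfrak{b}$ to an integral ideal in the class $[\mathfrak{b}]C_i^{-1}$ with strictly smaller $\varphi$-value, reaching $(1)$ after at most $\varphi(\mathfrak{b}_0)$ steps. Your strong induction repackages that iteration, and it explicitly checks two things the paper leaves implicit: that $\mathfrak{b}^{-1}\mathfrak{c}-\mathfrak{c}$ is nonempty, and that $\mathfrak{b}\mathfrak{c}_i^{-1}(\gamma-q_i)$ is integral.
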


We also recall Graves' Motzkin construction for Euclidean ideal classes\cite{Graves2009}.
\\

\begin{defn}\label{24} \n
Let \(C=[\mathfrak{c}]\in Cl(R)\). Define
\[
A_{C,0}=\{(1)\}.
\]
For \(n>0\), define \(A_{C,n}\) inductively by
\[
A_{C,n}=A_{C,n-1}\cup\left\{\mathfrak{b}~\middle|~\begin{array}{l}\mathfrak{b}\text{ is a non-zero integral ideal of } R, \\ \text{for every } \gamma\in \mathfrak{b}^{-1}\mathfrak{c}-\mathfrak{c}, \\ \text{there exists } q\in \mathfrak{c} \text{ such that }\mathfrak{b}\mathfrak{c}^{-1}(\gamma-q)\in A_{C,n-1}\end{array}\right\}.
\]
Finally, set
\[
A_{C,\omega}=\bigcup_{n=0}^{\infty}A_{C,n}.
\]
The filtration \(\{A_{C,n}\}\), or its union \(A_{C,\omega}\), is called \textit{Motzkin's construction for \(C\)}.
\\
\end{defn}

Note that these \(A_{C,n}\) and \(A_{C,\omega}\) are the sets of all inverse ideals of the original Motzkin construction as defined by Graves.
\\

\section{Euclidean systems of ray classes}

We first record a correspondence between unit orbits of congruence sets and integral ideals in a ray class. This correspondence motivates the definition of \(S_{\mathfrak{m}}(\tilde{\mathfrak{c}})\) below.
\\

\begin{lem}\label{31} \n
Let \(0\neq\alpha\in\ded_K\), and let \(\mathfrak{c}\) and \(\mathfrak{n}\) be non-zero integral ideals such that
\[
(\alpha,\mathfrak{c}\mathfrak{n})=\mathfrak{c}.
\]
Then the map
\[
E_K\backslash(E_K\alpha+\mathfrak{c}\mathfrak{n})\longrightarrow \left\{\mathfrak{a}~\middle|~\mathfrak{a}\in \mathfrak{c}^{-1}(\alpha)P_K^{\mathfrak{n}},~\mathfrak{a}\text{ is integral}\right\}
\]
defined by
\[
E_K\delta\longmapsto \mathfrak{c}^{-1}(\delta)
\]
is a bijection.
\end{lem}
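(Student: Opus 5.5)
The plan is to verify directly that the map is well defined, injective and surjective, working one prime at a time with \(\mathfrak{p}\)-adic valuations \(v_\mathfrak{p}\). Here \(\beta\in K^{\mathfrak{n}}\) means the multiplicative congruence \(v_\mathfrak{p}(\beta-1)\ge v_\mathfrak{p}(\mathfrak{n})\) for every \(\mathfrak{p}\mid\mathfrak{n}\).

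First I extract the local content of the hypothesis \((\alpha,\mathfrak{c}\mathfrak{n})=\mathfrak{c}\). It says \(\min\{v_\mathfrak{p}(\alpha),v_\mathfrak{p}(\mathfrak{c})+v_\mathfrak{p}(\mathfrak{n})\}=v_\mathfrak{p}(\mathfrak{c})\) for every prime \(\mathfrak{p}\). Hence \(\alpha\in\mathfrak{c}\) in general, and \(v_\mathfrak{p}(\alpha)=v_\mathfrak{p}(\mathfrak{c})\) for every \(\mathfrak{p}\mid\mathfrak{n}\). In particular \(\mathfrak{c}^{-1}(\alpha)\) is integral and prime to \(\mathfrak{n}\), so it lies in \(J_K^{\mathfrak{n}}\) and the target set makes sense.

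Well-definedness comes next. Let \(\delta=\varepsilon\alpha+x\) with \(\varepsilon\in E_K\) and \(x\in\mathfrak{c}\mathfrak{n}\). Then \(\delta\in\mathfrak{c}\), so \(\mathfrak{c}^{-1}(\delta)\) is integral. Put \(\beta=\delta/(\varepsilon\alpha)=1+x/(\varepsilon\alpha)\). For \(\mathfrak{p}\mid\mathfrak{n}\) we have \(v_\mathfrak{p}(x/(\varepsilon\alpha))\ge v_\mathfrak{p}(\mathfrak{c})+v_\mathfrak{p}(\mathfrak{n})-v_\mathfrak{p}(\mathfrak{c})=v_\mathfrak{p}(\mathfrak{n})\), so \(\beta\in K^{\mathfrak{n}}\). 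Therefore \(\mathfrak{c}^{-1}(\delta)=\mathfrak{c}^{-1}(\alpha)(\beta)\) lies in \(\mathfrak{c}^{-1}(\alpha)P_K^{\mathfrak{n}}\). This argument also shows \(\delta\neq0\) when \(\mathfrak{n}\neq(1)\), since \(\beta\) is a \(\mathfrak{p}\)-adic unit. In the degenerate case \(\mathfrak{n}=(1)\) one has to exclude the orbit \(\{0\}\), i.e. read the left side as nonzero elements; I would remark on this convention. The image depends only on the orbit \(E_K\delta\) because \((\eta\delta)=(\delta)\) for \(\eta\in E_K\).

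Injectivity is immediate. If \(\mathfrak{c}^{-1}(\delta)=\mathfrak{c}^{-1}(\delta')\), then \((\delta)=(\delta')\), so \(\delta'=\eta\delta\) for some unit \(\eta\), and the two orbits coincide.

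For surjectivity, take an integral \(\mathfrak{a}=\mathfrak{c}^{-1}(\alpha)(\beta)\) with \(\beta\in K^{\mathfrak{n}}\), and set \(\delta=\alpha\beta\). Then \((\delta)=\mathfrak{c}\mathfrak{a}\subset\mathfrak{c}\), and it remains to show \(\delta-\alpha=\alpha(\beta-1)\in\mathfrak{c}\mathfrak{n}\), which I check prime by prime.
\begin{itemize}
\item For \(\mathfrak{p}\mid\mathfrak{n}\): \(v_\mathfrak{p}(\alpha(\beta-1))\ge v_\mathfrak{p}(\mathfrak{c})+v_\mathfrak{p}(\mathfrak{n})\).
\item For \(\mathfrak{p}\nmid\mathfrak{n}\): \(v_\mathfrak{p}(\delta-\alpha)\ge\min\{v_\mathfrak{p}(\delta),v_\mathfrak{p}(\alpha)\}\ge v_\mathfrak{p}(\mathfrak{c})\), using \(\delta,\alpha\in\mathfrak{c}\).
\end{itemize}
Thus \(\delta\in\alpha+\mathfrak{c}\mathfrak{n}\) and \(\mathfrak{c}^{-1}(\delta)=\mathfrak{a}\).

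The only delicate point is this valuation bookkeeping: the equality \(v_\mathfrak{p}(\alpha)=v_\mathfrak{p}(\mathfrak{c})\) at primes dividing \(\mathfrak{n}\) is exactly what makes both directions work. I expect no further obstacle beyond fixing the convention for \(K^{\mathfrak{n}}\) and the case \(\mathfrak{n}=(1)\).
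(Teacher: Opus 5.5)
Your proof is correct, but it handles surjectivity by a different route from the paper. The paper first proves surjectivity under the extra assumption $(\mathfrak{c},\mathfrak{n})=1$. It then reduces the general case to that one by rescaling. It takes the $\mathfrak{n}$-part $\mathfrak{d}$ of $\mathfrak{c}$ and chooses an integral ideal $\mathfrak{e}$ prime to $\mathfrak{c}\mathfrak{n}$ with $\mathfrak{d}^{-1}\mathfrak{e}=(\lambda)$ principal. It replaces $(\alpha,\mathfrak{c})$ by $(\lambda\alpha,\lambda\mathfrak{c})$ and observes that the target set is unchanged.

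You instead check $\alpha(\beta-1)\in\mathfrak{c}\mathfrak{n}$ prime by prime. The fact you isolate, $v_{\mathfrak{p}}(\alpha)=v_{\mathfrak{p}}(\mathfrak{c})$ for $\mathfrak{p}\mid\mathfrak{n}$, is exactly what the paper's rescaling normalizes to $0$. So the reduction step and the auxiliary ideal $\mathfrak{e}$ are unnecessary.

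Your valuation bookkeeping is also more accurate than the paper's coprime step. That step asserts $\alpha(\gamma-1)\in(\alpha)\mathfrak{n}$ and $\mathfrak{c}\cap(\alpha)\mathfrak{n}=\mathfrak{c}\mathfrak{n}$. Neither is literally true, because $\gamma-1$ may have denominators away from $\mathfrak{n}$. For example, take $K=\mathbb{Q}$, $\mathfrak{n}=(3)$, $\mathfrak{c}=(5)$, $\alpha=35$, $\gamma=4/7$. Then $\delta-\alpha=-15\notin(105)$, even though the intended conclusion $\delta-\alpha\in\mathfrak{c}\mathfrak{n}=(15)$ holds. Your local argument is the correct version of that step.

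Your remark about $\mathfrak{n}=(1)$ is a genuine correction to the statement as written, which the paper does not address. In that case $E_K\alpha+\mathfrak{c}=\mathfrak{c}$ contains the orbit $\{0\}$, which has no image, so it must be excluded. This matches the condition $q_i\neq\gamma$ in Treatman's definition.

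The paper's route stays at the level of global ideals, but here it gains nothing essential. Yours is shorter and self-contained.
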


\begin{proof}
Let \(\delta\in E_K\alpha+\mathfrak{c}\mathfrak{n}\). Then \(\delta=\varepsilon\alpha+\eta\) for some \(\varepsilon\in E_K\) and \(\eta\in \mathfrak{c}\mathfrak{n}\). Since \((\alpha,\mathfrak{c}\mathfrak{n})=\mathfrak{c}\), the integral ideal \(\mathfrak{c}^{-1}(\alpha)\) is relatively prime to \(\mathfrak{n}\). Hence,
\[
\frac{\delta}{\varepsilon\alpha}=1+\frac{\eta}{\varepsilon\alpha}\equiv 1 \pmod{\mathfrak{n}}.
\]
Therefore
\[
\frac{(\delta)}{(\alpha)}\in P_K^{\mathfrak{n}},
\]
and so \(\mathfrak{c}^{-1}(\delta)\in \mathfrak{c}^{-1}(\alpha)P_K^{\mathfrak{n}}\). Moreover,
\[
E_K\alpha+\mathfrak{c}\mathfrak{n}\subset (\alpha,\mathfrak{c}\mathfrak{n})=\mathfrak{c},
\]
so \(\mathfrak{c}^{-1}(\delta)\) is integral. Thus the map is constructed and is clearly well-defined.

The map is injective. Indeed, if \(\mathfrak{c}^{-1}(\delta)=\mathfrak{c}^{-1}(\delta')\), then \((\delta)=(\delta')\). Hence \(\delta'=u\delta\) for some unit \(u\in E_K\), and therefore \(\delta\) and \(\delta'\) define the same element of \(E_K\backslash(E_K\alpha+\mathfrak{c}\mathfrak{n})\).

It remains to prove surjectivity. We first assume that
\[
(\mathfrak{c},\mathfrak{n})=1.
\]
Let \(\mathfrak{a}\in \mathfrak{c}^{-1}(\alpha)P_K^{\mathfrak{n}}\) be an integral ideal. Then, there exists \(\gamma\in K^\ast\) such that \(\gamma\equiv 1 \pmod{\mathfrak{n}}\) and \(\mathfrak{a}=\mathfrak{c}^{-1}(\alpha)(\gamma)\). Put \(\delta=\alpha\gamma\), then \((\delta)=\mathfrak{a}\mathfrak{c}\). We have
\[
\delta-\alpha=\alpha(\gamma-1)\in (\alpha)\mathfrak{n}.
\]
Also, \(\delta\in\mathfrak{c}\mathfrak{a}\subset\mathfrak{c}\) and \(\alpha\in\mathfrak{c}\), so \(\delta-\alpha\in\mathfrak{c}\). Since \((\mathfrak{c},\mathfrak{n})=1\) and \((\alpha,\mathfrak{c}\mathfrak{n})=\mathfrak{c}\), we have
\[
\delta-\alpha\in \mathfrak{c}\cap (\alpha)\mathfrak{n}=\mathfrak{c}\mathfrak{n}.
\]
Hence, \(\delta-\alpha\in\mathfrak{c}\mathfrak{n}\), and therefore \(\delta\in E_K\alpha+\mathfrak{c}\mathfrak{n}\). Moreover, \(\mathfrak{c}^{-1}(\delta)=\mathfrak{a}\). This proves surjectivity in the case \((\mathfrak{c},\mathfrak{n})=1\).

We now prove the general case. Let
\[
\mathfrak{d}=\prod_{\p\mid \mathfrak{n}}\mathfrak{p}^{v_{\mathfrak{p}}(\mathfrak{c})}
\]
be the part of \(\mathfrak{c}\) supported on the prime ideals dividing \(\mathfrak{n}\). Choose an integral ideal \(\mathfrak{e}\), prime to \(\mathfrak{c}\mathfrak{n}\), such that \(\mathfrak{d}^{-1}\mathfrak{e}\) is principal, say \((\lambda)=\mathfrak{d}^{-1}\mathfrak{e}\) for some \(\lambda\in K^\ast\). Put
\[
\tilde{\alpha}=\lambda\alpha,~\tilde{\mathfrak{c}}=\lambda\mathfrak{c}.
\]
Then \(\tilde{\mathfrak{c}}=\lambda\mathfrak{c}=\mathfrak{c}\mathfrak{d}^{-1}\mathfrak{e}\) is an integral ideal, and by construction
\[
(\tilde{\mathfrak{c}},\mathfrak{n})=1,~(\tilde{\alpha},\tilde{\mathfrak{c}}\mathfrak{n})=\tilde{\mathfrak{c}}.
\]
Multiplication by \(\lambda\) gives a bijection
\[
E_K\backslash(E_K\alpha+\mathfrak{c}\mathfrak{n})\longrightarrow E_K\backslash(E_K\tilde{\alpha}+\tilde{\mathfrak{c}}\mathfrak{n}),~E_K\delta\longmapsto E_K\lambda\delta.
\]
On the other hand, \(\tilde{\mathfrak{c}}^{-1}(\tilde{\alpha})=(\lambda\mathfrak{c})^{-1}(\lambda\alpha)=\mathfrak{c}^{-1}(\alpha)\), and for every \(\delta\) we have \(\tilde{\mathfrak{c}}^{-1}(\lambda\delta)=\mathfrak{c}^{-1}(\delta)\). Therefore, the diagram
\[
\xymatrix{
E_K\backslash(E_K\alpha+\mathfrak{c}\mathfrak{n}) \ar[rd] \ar[d]_{\lambda} & \\
E_K\backslash(E_K\tilde{\alpha}+\tilde{\mathfrak{c}}\mathfrak{n}) \ar[r] \ar@{}[ru]|(.34){\circlearrowright} &  \left\{\mathfrak{a}~\middle|~\mathfrak{a}\in \mathfrak{c}^{-1}(\alpha)P_K^{\mathfrak{n}},~\mathfrak{a} \text{ is integral}\right\}
}
\]
commutes. Since the lower horizontal map is bijective by the coprime case, the upper horizontal map is also bijective.
\\
\end{proof}

We now introduce the notation used in the definition of Euclidean systems for ray classes. Let \(\widetilde{\mathfrak{c}}\) be a fractional ideal prime to \(\mathfrak{m}\). Write
\[
\tilde{\mathfrak{c}}=\frac{\mathfrak{a}}{\mathfrak{d}},
\]
where \(\mathfrak{a}\) and \(\mathfrak{d}\) are coprime integral ideals. We call \(\mathfrak{a}\) and \(\mathfrak{d}\) the numerator and the denominator of \(\widetilde{\mathfrak{c}}\), respectively. Define
\[
S_{\mathfrak{m}}(\tilde{\mathfrak{c}})=\mathfrak{d}^{-1}\text{Int}(\mathfrak{a}P_K^{\mathfrak{d}\mathfrak{m}}).
\]
Let \(\mathcal{C}=\{C_1,\dots,C_r\}\subset Cl_K^{\mathfrak{m}}\) be a subset, and let \((1)\neq \mathfrak{b}\in \text{Int}(J_K^{\mathfrak{m}})\). Define \(D(\mathcal{C},\mathfrak{b})\) to be the set of all \(r\)-tuples
\[
\left(\prod_{j\neq i}\mathfrak{c}_j\right)_{i=1}^r
\]
where \(\mathfrak{c}_i\in \text{Int}(C_i)\) and the ideals \(\mathfrak{b},\mathfrak{c}_1,\dots,\mathfrak{c}_r\) are pairwise coprime. We also define
\[
I(\mathcal{C},\mathfrak{b})=D(\mathcal{C},\mathfrak{b})\cdot\left\{\tilde{\mathfrak{d}}~\middle|~\begin{array}{l}\tilde{\mathfrak{d}}\in (C_1\cdots C_r)^{-1}-\text{Int}\bigl((C_1\cdots C_r)^{-1}\bigr), \\ \tilde{\mathfrak{d}}\mathfrak{b}\text{ is integral}\end{array}\right\}.
\]
Equivalently, \(I(\mathcal{C},\mathfrak{b})\) is the set of all \(r\)-tuples of the form
\[
\left(\tilde{\mathfrak{d}}\prod_{j\neq i}\mathfrak{c}_j\right)_{i=1}^r,
\]
where \(\mathfrak{c}_i\in\text{Int}(C_i)\), the ideals \(\mathfrak{b},\mathfrak{c}_1,\dots,\mathfrak{c}_r\) are pairwise coprime, and \(\tilde{\mathfrak{d}}\in (C_1\cdots C_r)^{-1}-\text{Int}\bigl((C_1\cdots C_r)^{-1}\bigr)\) such that \(\tilde{\mathfrak{d}}\mathfrak{b}\) is integral.

When \(r=1\), we write
\[
I(C,\mathfrak{b})=\left\{\tilde{\mathfrak{d}}~\middle|~\tilde{\mathfrak{d}}\in C^{-1}-\text{Int}(C^{-1}),~\tilde{\mathfrak{d}}\mathfrak{b}\text{ is integral}\right\}
\]
and clearly
\[
I(\mathcal{C},\mathfrak{b})=D(\mathcal{C},\mathfrak{b})\cdot I(C_1\dots C_r,\mathfrak{b}).
\]
\\

\begin{defn}\label{32} \n
Let \(\mathcal{C}=\{C_1,\dots,C_r\}\subset Cl_K^{\m}\) be a subset. A function
\[
\varphi:\text{Int}(J_K^{\mathfrak m})\longrightarrow \mathbf{ON}
\]
is called \textit{a Euclidean function for \(\mathcal{C}\)} if, for any \((1)\neq\mathfrak{b}\in \text{Int}(J_K^{\mathfrak m})\) and any \((\tilde{\mathfrak{c}}_1,\dots,\tilde{\mathfrak{c}}_r)\in I(\mathcal{C},\mathfrak{b})\), there exist \(i\in\{1,\dots,r\}\) and
\[
\mathfrak{x}_i\in S_{\mathfrak{m}}(\tilde{\mathfrak{c}}_i)
\]
such that
\[
\varphi(\mathfrak{x}_i\mathfrak{b})<\varphi(\mathfrak{b}).
\]
If such a function exists, then \(\mathcal{C}\) is called \textit{a Euclidean system of ray classes}.
\\
\end{defn}

The following proposition shows that, in the case of trivial modulus, the above definition recovers Treatman's Euclidean systems. Here we restrict to \(\mathbb{N}\)-valued Euclidean functions; later, in Section 4, we will show that for rings of integers of number fields, the existence of an ordinal-valued Euclidean function is equivalent to the existence of an \(\mathbb{N}\)-valued one.
\\

\begin{prop}\label{33} \n
Suppose that \(\mathfrak{m}=(1)\). Let \(\mathcal{C}=\{C_1,\dots,C_r\}\subset Cl_K\) and let
\[
\varphi:\{\text{non-zero integral ideals of }\ded_K\}\to \mathbb{N}
\]
be a function. Then \(\varphi\) is a Euclidean function for \(\mathcal{C}\) in the above sense if and only if it is a Euclidean function for \(\mathcal{C}\) in the sense of Treatman.
\end{prop}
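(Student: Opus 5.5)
The plan is to establish, for \(\mathfrak{m}=(1)\), an explicit dictionary between the pairs \((\mathfrak{b},\gamma)\) in Treatman's Definition~\ref{22} and the pairs \((\mathfrak{b},(\tilde{\mathfrak{c}}_1,\dots,\tilde{\mathfrak{c}}_r))\) with \((\tilde{\mathfrak{c}}_1,\dots,\tilde{\mathfrak{c}}_r)\in I(\mathcal{C},\mathfrak{b})\) in Definition~\ref{32}. Under this dictionary the sets of candidate values \(\varphi(\mathfrak{x}_i\mathfrak{b})\) and \(\varphi(\mathfrak{b}\mathfrak{c}_i^{-1}(\gamma-q_i))\) should literally coincide, so both Euclidean conditions become the same statement. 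Since \(\mathfrak{m}=(1)\), every non-zero integral ideal lies in \(\text{Int}(J_K^{\mathfrak{m}})\), so the two domains of \(\varphi\) agree.

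\emph{Step 1 (choice of representatives).} Fix \(\mathfrak{b}\neq(1)\). By the remark after Definition~\ref{22}, Treatman's condition does not depend on the choice of pairwise coprime \(\mathfrak{c}_i\in C_i\). I may therefore use representatives that are also coprime to \(\mathfrak{b}\), which is exactly the form of the tuples in \(D(\mathcal{C},\mathfrak{b})\); such representatives exist by the usual avoidance argument in ideal classes. Put \(\mathfrak{c}=\prod_i\mathfrak{c}_i\).

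\emph{Step 2 (from \(\gamma\) to \(\tilde{\mathfrak{d}}\) and back).} Given \(\gamma\in\mathfrak{b}^{-1}\mathfrak{c}-\mathfrak{c}\), set \(\tilde{\mathfrak{d}}=(\gamma)\mathfrak{c}^{-1}\). Then \(\tilde{\mathfrak{d}}\) lies in the class \((C_1\cdots C_r)^{-1}\). It is not integral because \(\gamma\notin\mathfrak{c}\), and \(\tilde{\mathfrak{d}}\mathfrak{b}\) is integral because \(\gamma\in\mathfrak{b}^{-1}\mathfrak{c}\).

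Conversely, given such a \(\tilde{\mathfrak{d}}\), the ideal \(\tilde{\mathfrak{d}}\mathfrak{c}\) is principal, say \((\gamma)\), and \(\gamma\) lies in \(\mathfrak{b}^{-1}\mathfrak{c}-\mathfrak{c}\). The generator is determined only up to units. This is harmless because replacing \(\gamma\) by \(u\gamma\) and \(q_i\) by \(uq_i\) leaves all ideals \(\mathfrak{c}_i^{-1}(\gamma-q_i)\) unchanged. Under this correspondence,
\[
\tilde{\mathfrak{c}}_i=\tilde{\mathfrak{d}}\prod_{j\neq i}\mathfrak{c}_j=(\gamma)\mathfrak{c}_i^{-1}.
\]

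\emph{Step 3 (key identity).} Writing \((\gamma)\mathfrak{c}_i^{-1}=\mathfrak{a}/\mathfrak{d}\) in lowest terms, I will prove
\[
S_{(1)}\bigl((\gamma)\mathfrak{c}_i^{-1}\bigr)=\mathfrak{d}^{-1}\,\text{Int}(\mathfrak{a}P_K^{\mathfrak{d}})=\left\{\mathfrak{c}_i^{-1}(\gamma-q)~\middle|~q\in\mathfrak{c}_i,~q\neq\gamma\right\}.
\]
Multiplying by \(\mathfrak{b}\) then turns the existence of \(\mathfrak{x}_i\) with \(\varphi(\mathfrak{x}_i\mathfrak{b})<\varphi(\mathfrak{b})\) into the existence of \(q_i\) with \(\varphi(\mathfrak{b}\mathfrak{c}_i^{-1}(\gamma-q_i))<\varphi(\mathfrak{b})\), which gives both implications.

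For the inclusion \(\supseteq\), let \(\delta=\gamma-q\neq0\). Then \(\delta/\gamma=1-q/\gamma\), and \(q/\gamma\in\mathfrak{c}_i(\gamma)^{-1}=\mathfrak{d}\mathfrak{a}^{-1}\). Since \(\mathfrak{a}\) is prime to \(\mathfrak{d}\), this gives \(\delta/\gamma\equiv1\pmod{\mathfrak{d}}\). Hence
\[
\mathfrak{c}_i^{-1}(\delta)\in\mathfrak{a}\mathfrak{d}^{-1}P_K^{\mathfrak{d}},
\]
and \(\mathfrak{d}\,\mathfrak{c}_i^{-1}(\delta)\) is integral with the correct class. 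For \(\subseteq\), let \(\mathfrak{d}\mathfrak{y}\) be integral, \(\mathfrak{d}\mathfrak{y}\in\mathfrak{a}P_K^{\mathfrak{d}}\), and \(\mathfrak{y}\) integral. Write \(\mathfrak{d}\mathfrak{y}=\mathfrak{a}(\beta)\) with \(\beta\equiv1\pmod{\mathfrak{d}}\), and put \(\delta=\gamma\beta\), so that \(\mathfrak{y}=\mathfrak{c}_i^{-1}(\delta)\). Integrality of \(\mathfrak{y}\) forces \(\delta\in\mathfrak{c}_i\).

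\emph{Main obstacle.} The delicate point is showing \(\gamma(\beta-1)\in\mathfrak{c}_i\). At primes dividing \(\mathfrak{d}\) this follows from \(\beta\equiv1\) and \((\gamma)\mathfrak{d}=\mathfrak{a}\mathfrak{c}_i\). At the remaining primes it follows from \(\gamma\beta=\delta\in\mathfrak{c}_i\) together with \(v_{\mathfrak{p}}(\gamma)\ge v_{\mathfrak{p}}(\mathfrak{c}_i)\). This mirrors the surjectivity argument in Lemma~\ref{31}, where the unit quotient is irrelevant because we work with a fixed generator \(\gamma\). Then \(q=\gamma-\delta\in\mathfrak{c}_i\), and \(q\neq\gamma\) because \(\delta\neq0\).
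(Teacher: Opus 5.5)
Your route is the paper's: representatives prime to $\mathfrak{b}$, the dictionary $\gamma\leftrightarrow\tilde{\mathfrak{d}}=(\gamma)\mathfrak{c}^{-1}$, and the identification of $S_{(1)}\bigl((\gamma)\mathfrak{c}_i^{-1}\bigr)$ with the ideals $\mathfrak{c}_i^{-1}(\gamma-q)$, $q\in\mathfrak{c}_i$, $q\neq\gamma$. The paper gets that identification from Lemma~\ref{31}. You verify it directly by valuations, which is more self-contained, since Lemma~\ref{31} is stated for integral $\alpha$ and an integral target set.

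However, your proof of the inclusion $\subseteq$ is wrong as written. You take $\mathfrak{y}\in S_{(1)}\bigl((\gamma)\mathfrak{c}_i^{-1}\bigr)=\mathfrak{d}^{-1}\text{Int}(\mathfrak{a}P_K^{\mathfrak{d}})$, additionally assume $\mathfrak{y}$ integral, and deduce $\delta\in\mathfrak{c}_i$. But every such $\mathfrak{y}$ equals $\mathfrak{d}^{-1}\mathfrak{e}$ with $\mathfrak{e}$ integral and prime to $\mathfrak{d}$. So $\mathfrak{y}$ is integral only if $\mathfrak{d}=(1)$, i.e.\ $\gamma\in\mathfrak{c}_i$.

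In the situation of the proposition this never happens. Since $\tilde{\mathfrak{d}}\mathfrak{b}$ is integral and the $\mathfrak{c}_j$ are prime to $\mathfrak{b}$, the denominator $\mathfrak{d}$ of $\tilde{\mathfrak{d}}\prod_{j\neq i}\mathfrak{c}_j$ equals the non-trivial denominator of the non-integral ideal $\tilde{\mathfrak{d}}$. Your own computation shows the inconsistency: $\delta\in\mathfrak{c}_i$ together with $\gamma(\beta-1)\in\mathfrak{c}_i$ would force $\gamma\in\mathfrak{c}_i$. So the $\subseteq$ argument, as written, applies to no element at all. That inclusion is exactly what the direction ``Definition~\ref{32} $\Rightarrow$ Treatman'' needs, because the $\mathfrak{x}_i$ supplied by Definition~\ref{32} are non-integral.

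The repair is local, and your valuation analysis survives it. Drop the integrality of $\mathfrak{y}$ and use only that $\mathfrak{d}\mathfrak{y}=\mathfrak{d}\mathfrak{c}_i^{-1}(\delta)$ is integral, i.e.\ $\delta\in\mathfrak{d}^{-1}\mathfrak{c}_i$. At primes $\mathfrak{p}\nmid\mathfrak{d}$ this gives $v_{\mathfrak{p}}(\delta)\ge v_{\mathfrak{p}}(\mathfrak{c}_i)$, which is all your ``remaining primes'' step actually uses. Combined with your computation at $\mathfrak{p}\mid\mathfrak{d}$, it yields $q=\gamma(1-\beta)\in\mathfrak{c}_i$, $q\neq\gamma$, and $\mathfrak{y}=\mathfrak{c}_i^{-1}(\gamma-q)$.

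In the $\supseteq$ half, the integrality of $\mathfrak{d}\mathfrak{c}_i^{-1}(\delta)$ also needs its one-line justification. We have $\gamma\in\mathfrak{d}^{-1}\mathfrak{c}_i$ because $(\gamma)\mathfrak{d}\mathfrak{c}_i^{-1}=\mathfrak{a}$, and $q\in\mathfrak{c}_i\subset\mathfrak{d}^{-1}\mathfrak{c}_i$. With these corrections your proposal is a complete proof along the paper's lines.
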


\begin{proof} \n
We compare the two Euclidean conditions. Choose pairwise coprime integral ideals \(\mathfrak{c}_i\in C_i\) and put
\[
\mathfrak{c}=\prod_{i=1}^r \mathfrak{c}_i.
\]
Treatman's condition says that, for every non-zero integral ideal \((1)\neq \mathfrak{b}\) and every \(\gamma\in \mathfrak{b}^{-1}\mathfrak{c}-\mathfrak{c}\), there exist \(i\in\{1,\dots,r\}\) and \(q_i\in \mathfrak{c}_i\) such that
\[
\varphi(\mathfrak{b}\mathfrak{c}_i^{-1}(\gamma-q_i))<\varphi(\mathfrak{b}).
\]
Since Treatman's definition is independent of the chosen representatives \(\mathfrak{c}_i\), we may allow the \(\mathfrak{c}_i\)'s to vary among pairwise coprime integral representatives of the classes \(C_i\), and also require them to be coprime to \(\mathfrak{b}\).

For fixed \(\gamma\), put 
\[
\tilde{\mathfrak{c}}_i=\frac{(\gamma)}{\mathfrak{c}_i}.
\]
Writing \(\ds \gamma=\frac{\alpha}{\beta}\) with \(0\neq\alpha,\beta\in\ded_K\), the preceding lemma identifies the unit orbits in \(E_K\gamma+\mathfrak{c}_i\) with the set \(S_{(1)}(\tilde{\mathfrak{c}}_i)\), more explicitly, the ideals of the form \(\mathfrak{c}_i^{-1}(\gamma-q_i),~q_i\in \mathfrak{c}_i\), are precisely the elements of \(S_{(1)}(\tilde{\mathfrak{c}}_i)\). Moreover, as the representatives \(\mathfrak{c}_i\) and the element \(\gamma\) vary, the resulting tuples \((\tilde{\mathfrak{c}}_1,\dots,\tilde{\mathfrak{c}}_r)\) are exactly the elements of \(I(\mathcal{C},\mathfrak{b})\). Indeed, if \(\ds \tilde{\mathfrak{d}}=\frac{(\gamma)}{\mathfrak{c}}\), then
\[
\tilde{\mathfrak c}_i=\frac{(\gamma)}{\mathfrak{c}_i}=\tilde{\mathfrak{d}}\prod_{j\neq i}\mathfrak{c}_j.
\]
The conditions
\[
\gamma\in \mathfrak{b}^{-1}\mathfrak{c}-\mathfrak{c}
\]
are equivalent to
\[
\tilde{\mathfrak{d}}\mathfrak{b}\text{ is integral and}~\tilde{\mathfrak{d}}\notin\text{Int}\bigl((C_1\cdots C_r)^{-1}\bigr).
\]
Thus the possible tuples \(\ds \left(\frac{(\gamma)}{\mathfrak{c}_i}\right)_{i=1}^r\) are precisely the elements of \(I(\mathcal{C},\mathfrak{b})\).

Therefore Treatman's Euclidean condition is equivalent to the condition in the above definition with \(\mathfrak{m}=(1)\). This proves the assertion.
\\
\end{proof}

\section{Motzkin filtration and basic properties}

Throughout this section, unless otherwise stated, let
\[
\mathcal C=\{C_1,\dots,C_r\}\subset Cl_K^{\mathfrak m}
\]
be a subset of the ray class group.

Euclidean functions are semi-multiplicative.
\\

\begin{prop}\label{41} \n
Let \(\mathcal{C}\) be a Euclidean system, and let \(\varphi\) be a Euclidean function for \(\mathcal{C}\). Then, for any \(\mathfrak{b}_1,\mathfrak{b}_2\in \text{Int}(J_K^{\mathfrak{m}})\), we have
\[
\varphi(\mathfrak{b}_1)\le \varphi(\mathfrak{b}_1\mathfrak{b}_2),
\]
and equality holds if and only if \(\mathfrak{b}_2=(1)\). In particular,
\[
\varphi^{-1}(\min \text{Im}(\varphi))={(1)}.
\]
\end{prop}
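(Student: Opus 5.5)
We prove the inequality $\varphi(\mathfrak{b}_1)\le\varphi(\mathfrak{b}_1\mathfrak{b}_2)$ and the strictness together, by transfinite induction on the ordinal $\varphi(\mathfrak{b})$. The inductive claim is:
\[
\text{for every } \mathfrak{b}\in\text{Int}(J_K^{\mathfrak m}) \text{ and every integral } \mathfrak{b}_1\mid\mathfrak{b},\quad \varphi(\mathfrak{b}_1)\le\varphi(\mathfrak{b}), \text{ with strict inequality if } \mathfrak{b}_1\neq\mathfrak{b}.
\]
If $\mathfrak{b}_2=(1)$ there is nothing to prove. So let $\mathfrak{b}=\mathfrak{b}_1\mathfrak{b}_2$ with $\mathfrak{b}_2\neq(1)$, and assume the claim holds for all ideals whose $\varphi$-value is smaller than $\varphi(\mathfrak{b})$. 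The final assertion then follows by taking $\mathfrak{b}_1=(1)$: every $\mathfrak{b}\neq(1)$ satisfies $\varphi((1))<\varphi(\mathfrak{b})$.

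The key step is to build a tuple in $I(\mathcal{C},\mathfrak{b})$ for which every element of every $S_{\mathfrak m}(\tilde{\mathfrak c}_i)$, multiplied by $\mathfrak{b}$, is still divisible by $\mathfrak{b}_1$.

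1. Pick a prime $\mathfrak{p}\mid\mathfrak{b}_2$. It is prime to $\mathfrak m$, since $\mathfrak{b}_2\in\text{Int}(J_K^{\mathfrak m})$.
2. Using that every ray class mod $\mathfrak m$ contains infinitely many prime ideals, choose primes $\mathfrak{c}_i\in C_i$ that are pairwise distinct and prime to $\mathfrak{b}$.
3. Choose a prime $\mathfrak{e}$ in the ray class $[\mathfrak{p}](C_1\cdots C_r)^{-1}$, prime to $\mathfrak{b}\mathfrak{c}_1\cdots\mathfrak{c}_r$.
4. Put $\tilde{\mathfrak d}=\mathfrak{p}^{-1}\mathfrak{e}$. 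Then $\tilde{\mathfrak d}\in(C_1\cdots C_r)^{-1}$, it is not integral, and $\tilde{\mathfrak d}\mathfrak{b}$ is integral because $\mathfrak{p}\mid\mathfrak{b}$.

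Hence $(\tilde{\mathfrak c}_i)_i=\bigl(\tilde{\mathfrak d}\prod_{j\ne i}\mathfrak{c}_j\bigr)_i\in I(\mathcal{C},\mathfrak{b})$. By the choices above, the denominator of each $\tilde{\mathfrak c}_i$ is exactly $\mathfrak{p}$. So every $\mathfrak{x}\in S_{\mathfrak m}(\tilde{\mathfrak c}_i)$ has the form $\mathfrak{x}=\mathfrak{p}^{-1}\mathfrak{a}'$ with $\mathfrak{a}'$ integral, and therefore
\[
\mathfrak{x}\mathfrak{b}=\mathfrak{a}'\,\mathfrak{b}_1\,(\mathfrak{b}_2\mathfrak{p}^{-1}),
\]
where $\mathfrak{b}_2\mathfrak{p}^{-1}$ is integral. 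Thus $\mathfrak{b}_1\mid\mathfrak{x}\mathfrak{b}$, and $\mathfrak{x}\mathfrak{b}\in\text{Int}(J_K^{\mathfrak m})$.

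Now apply Definition 3.2 to $\mathfrak{b}\neq(1)$ and this tuple. It gives $i$ and $\mathfrak{x}\in S_{\mathfrak m}(\tilde{\mathfrak c}_i)$ with $\varphi(\mathfrak{x}\mathfrak{b})<\varphi(\mathfrak{b})$. Since $\mathfrak{b}_1\mid\mathfrak{x}\mathfrak{b}$, the induction hypothesis (non-strict part) applied to $\mathfrak{x}\mathfrak{b}$ gives $\varphi(\mathfrak{b}_1)\le\varphi(\mathfrak{x}\mathfrak{b})$. Hence $\varphi(\mathfrak{b}_1)<\varphi(\mathfrak{b})$, which is the strict inequality. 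This closes the induction, and the "if and only if" follows because equality trivially holds when $\mathfrak{b}_2=(1)$.

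The only real obstacle is the construction of $\tilde{\mathfrak d}$ and the $\mathfrak{c}_i$. It needs prime ideals in prescribed ray classes avoiding finitely many primes, which is the ray-class Chebotarev (or Dirichlet) density theorem. It also needs the observation that the denominator of $\tilde{\mathfrak c}_i$ is exactly $\mathfrak{p}$, so that $S_{\mathfrak m}(\tilde{\mathfrak c}_i)\subset\mathfrak{p}^{-1}\cdot\{\text{integral ideals}\}$. The well-foundedness of $\mathbf{ON}$ makes the induction legitimate.
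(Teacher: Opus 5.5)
Your proof is correct and is essentially the paper's argument. The paper fixes $\mathfrak{b}_1$, takes $\mathfrak{b}_2^0$ minimizing $\varphi(\mathfrak{b}_1\mathfrak{b}_2)$, and uses a tuple $\bigl(\tilde{\mathfrak{d}}\prod_{j\neq i}\mathfrak{c}_j\bigr)_i$ with $\tilde{\mathfrak{d}}\mathfrak{b}_2^0$ integral to produce a multiple $\mathfrak{x}_i\mathfrak{b}_1\mathfrak{b}_2^0$ of $\mathfrak{b}_1$ with smaller value — your induction step recast as a minimal-counterexample argument — whereas you construct $\tilde{\mathfrak{d}}=\mathfrak{p}^{-1}\mathfrak{e}$ and the $\mathfrak{c}_i$ coprime to $\mathfrak{b}$ explicitly (which the paper leaves implicit), though a density theorem is more than is needed for that step.
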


\begin{proof}
When \(\mathfrak{b}_1=(1)\), the inequality is clear. Assume that \(\mathfrak{b}_1\neq (1)\). Choose an integral ideal \(\mathfrak{b}_2^0\) such that
\[
\varphi(\mathfrak{b}_1\mathfrak{b}_2^0)=\min\{\varphi(\mathfrak{b}_1\mathfrak{b}_2)~|~\mathfrak{b}_2\in \text{Int}(J_K^{\mathfrak{m}})\}.
\]
Suppose that \(\mathfrak{b}_2^0\neq (1)\). Then we can take \(\tilde{\mathfrak{d}}\in I(C_1\cdots C_r,\mathfrak{b}_2^0)\), and we can also take pairwise coprime ideals \(\mathfrak{c}_i\in \text{Int}(C_i)\). Then \(\tilde{\mathfrak{d}}\in I(C_1\cdots C_r,\mathfrak{b}_1\mathfrak{b}_2^0)\). Hence there exist \(i=1,\dots,r\) and
\[
\mathfrak{x}_i\in S_{\mathfrak{m}}\left(\tilde{\mathfrak{d}}\prod_{j\neq i}\mathfrak{c}_j\right)
\]
such that \(\varphi(\mathfrak{x}_i\mathfrak{b}_1\mathfrak{b}_2^0)<\varphi(\mathfrak{b}_1\mathfrak{b}_2^0)\). Since \(\mathfrak{x}_i\mathfrak{b}_2^0\in \text{Int}(J_K^{\mathfrak{m}})\), this contradicts the minimality of \(\varphi(\mathfrak{b}_1\mathfrak{b}_2^0)\). Therefore \(\mathfrak{b}_2^0=(1)\), and the inequality follows.
\\
\end{proof}

\begin{prop}\label{42} \n
For a Euclidean system \(\mathcal{C}\), the function
\[
\varphi_{\mathcal{C},\min}:\text{Int}(J_K^{\mathfrak{m}})\to \mathbf{ON};~\mathfrak{b}\mapsto \min\{\varphi(\mathfrak{b})~|~\varphi\text{ is a Euclidean function for }\mathcal{C}\}
\]
is a Euclidean function for \(\mathcal{C}\).
\end{prop}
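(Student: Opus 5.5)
The plan is the standard Motzkin-style argument: a pointwise minimum of Euclidean functions is again Euclidean. I will check the defining condition of Definition \ref{32} directly for \(\varphi_{\mathcal C,\min}\).

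First I check that \(\varphi_{\mathcal{C},\min}\) is well defined. Since \(\mathcal{C}\) is a Euclidean system, the class of Euclidean functions for \(\mathcal{C}\) is nonempty. For each \(\mathfrak{b}\), the values \(\varphi(\mathfrak{b})\) form a nonempty class of ordinals, and any nonempty class of ordinals has a least element. So the minimum exists, even though the collection of all Euclidean functions is a proper class and not a set. I will state this explicitly, since it is the only place where ordinal values (rather than \(\mathbb N\)-values) matter.

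Next comes the main verification. Fix \((1)\neq\mathfrak{b}\in\text{Int}(J_K^{\mathfrak m})\) and \((\tilde{\mathfrak c}_1,\dots,\tilde{\mathfrak c}_r)\in I(\mathcal C,\mathfrak b)\).
\begin{itemize}
\item Choose a Euclidean function \(\psi\) for \(\mathcal{C}\) attaining the minimum at \(\mathfrak b\), so that \(\psi(\mathfrak b)=\varphi_{\mathcal C,\min}(\mathfrak b)\).
\item Because \(\psi\) is Euclidean, there exist \(i\) and \(\mathfrak x_i\in S_{\mathfrak m}(\tilde{\mathfrak c}_i)\) with \(\psi(\mathfrak x_i\mathfrak b)<\psi(\mathfrak b)\).
\item By definition of the minimum, \(\varphi_{\mathcal C,\min}(\mathfrak x_i\mathfrak b)\le\psi(\mathfrak x_i\mathfrak b)\).
\end{itemize}
Combining these gives
\[
\varphi_{\mathcal C,\min}(\mathfrak x_i\mathfrak b)\le\psi(\mathfrak x_i\mathfrak b)<\psi(\mathfrak b)=\varphi_{\mathcal C,\min}(\mathfrak b),
\]
which is exactly the Euclidean condition for \(\varphi_{\mathcal C,\min}\).

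For this to make sense, \(\mathfrak x_i\mathfrak b\) must lie in \(\text{Int}(J_K^{\mathfrak m})\), the domain of \(\varphi_{\mathcal C,\min}\). This is already implicit in the Euclidean condition for \(\psi\), since \(\psi\) is evaluated there; it was also used in the proof of Proposition \ref{41}.

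There is no genuine obstacle in this argument. The only care needed is in the first step: the minimum is taken over a proper class of functions, and its existence rests on the well-ordering of \(\mathbf{ON}\), which applies to any nonempty class of ordinals.
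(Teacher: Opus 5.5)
Your proposal is correct and follows the paper's proof exactly: fix $\mathfrak b$, pick a Euclidean function attaining $\varphi_{\mathcal C,\min}(\mathfrak b)$, apply its Euclidean property, and bound $\varphi_{\mathcal C,\min}(\mathfrak x_i\mathfrak b)$ above by that function's value. Your extra remark that the minimum exists by the well-ordering of $\mathbf{ON}$, even though the Euclidean functions form a proper class, is a welcome detail the paper leaves implicit.
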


\begin{proof}
For any \((1)\neq \mathfrak{b}\in \text{Int}(J_K^{\mathfrak{m}})\), choose a Euclidean function \(\varphi_{\mathfrak{b}}\) for \(\mathcal{C}\) such that \(\varphi_{\mathcal{C},\min}(\mathfrak{b})=\varphi_{\mathfrak{b}}(\mathfrak{b})\). Then, for any \((\tilde{\mathfrak{c}}_1,\dots,\tilde{\mathfrak{c}}_r)\in I(\mathcal{C},\mathfrak{b})\), there exist \(i=1,\dots,r\) and \(\mathfrak{x}_i\in S_{\mathfrak{m}}(\tilde{\mathfrak{c}}_i)\) such that \(\varphi_{\mathfrak{b}}(\mathfrak{x}_i\mathfrak{b})<\varphi_{\mathfrak{b}}(\mathfrak{b})\). Therefore
\[
\varphi_{\mathcal{C},\min}(\mathfrak{x}_i\mathfrak{b})\le \varphi_{\mathfrak{b}}(\mathfrak{x}_i\mathfrak{b})<\varphi_{\mathfrak{b}}(\mathfrak{b})=\varphi_{\mathcal{C},\min}(\mathfrak{b}).
\]
Thus \(\varphi_{\mathcal{C},\min}\) is a Euclidean function for \(\mathcal{C}\).
\\
\end{proof}

Euclidean systems of ray classes have the minimal Euclidean function.
\\

\begin{prop}\label{43} \n
Let \(\mathcal{C}\) be a Euclidean system, and let \(\varphi\) be a Euclidean function for \(\mathcal{C}\). Then an integral ideal \(\mathfrak{b}\), other than \((1)\), whose value with respect to \(\varphi\) is minimal belongs to one of \(C_1,\dots,C_r\). In particular, for any subset \(\mathcal{C}'\subset Cl_K^{\mathfrak{m}}\) such that \(\mathcal{C}'\cap\mathcal{C}=\emptyset\), the function \(\varphi\) is not a Euclidean function for \(\mathcal{C}'\).
\end{prop}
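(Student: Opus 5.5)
Let \(\mathfrak{b}\neq(1)\) be an integral ideal in \(\text{Int}(J_K^{\mathfrak m})\) with \(\varphi(\mathfrak{b})\) minimal among \(\varphi\)-values of ideals other than \((1)\). The plan is to apply the Euclidean condition of Definition \ref{32} to a well-chosen tuple in \(I(\mathcal{C},\mathfrak{b})\) and read off the ray class of \(\mathfrak{b}\).

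\emph{Step 1: every descent step lands at \((1)\).} For any tuple \((\tilde{\mathfrak{c}}_1,\dots,\tilde{\mathfrak{c}}_r)\in I(\mathcal{C},\mathfrak{b})\) we get some \(i\) and \(\mathfrak{x}_i\in S_{\mathfrak m}(\tilde{\mathfrak{c}}_i)\) with \(\varphi(\mathfrak{x}_i\mathfrak{b})<\varphi(\mathfrak{b})\). By the minimality of \(\varphi(\mathfrak{b})\) and Proposition \ref{41}, which says \((1)\) is the unique ideal of least value, this forces \(\mathfrak{x}_i\mathfrak{b}=(1)\), that is, \(\mathfrak{x}_i=\mathfrak{b}^{-1}\).

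\emph{Step 2: compute the class of \(\mathfrak{b}^{-1}\).} Write \(\tilde{\mathfrak{c}}_i=\mathfrak{a}/\mathfrak{d}\) in lowest terms. The ideal \(\mathfrak{x}_i\) lies in \(\mathfrak{d}^{-1}\text{Int}(\mathfrak{a}P_K^{\mathfrak{d}\mathfrak m})\subset \tilde{\mathfrak{c}}_iP_K^{\mathfrak m}\), since \(P_K^{\mathfrak{d}\mathfrak m}\subset P_K^{\mathfrak m}\). Hence the ray class of \(\mathfrak{x}_i\) equals that of \(\tilde{\mathfrak{c}}_i=\tilde{\mathfrak{d}}\prod_{j\ne i}\mathfrak{c}_j\), and
\[
[\tilde{\mathfrak{d}}]\prod_{j\neq i}C_j=(C_1\cdots C_r)^{-1}\prod_{j\ne i}C_j=C_i^{-1}.
\]
Therefore \([\mathfrak{b}^{-1}]=C_i^{-1}\), so \(\mathfrak{b}\in C_i\), which proves the first assertion.

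\emph{Step 3: show \(I(\mathcal{C},\mathfrak{b})\) is nonempty.} This is the step I expect to need the most care. We need pairwise coprime \(\mathfrak{c}_i\in\text{Int}(C_i)\) that are also coprime to \(\mathfrak{b}\); these exist because every ray class contains infinitely many primes, which can be taken to avoid any finite set. We also need \(\tilde{\mathfrak{d}}\in (C_1\cdots C_r)^{-1}\) that is not integral but satisfies \(\tilde{\mathfrak{d}}\mathfrak{b}\) integral. For this, pick a prime \(\mathfrak{p}\mid\mathfrak{b}\), which is prime to \(\mathfrak m\). Then choose an integral \(\mathfrak{e}\) prime to \(\mathfrak{p}\mathfrak m\) in the class \((C_1\cdots C_r)^{-1}[\mathfrak{p}]\), again using a prime in that ray class, and set \(\tilde{\mathfrak{d}}=\mathfrak{e}\mathfrak{p}^{-1}\). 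This \(\tilde{\mathfrak{d}}\) lies in the right class, is non-integral since \(v_{\mathfrak p}(\tilde{\mathfrak{d}})=-1\), and \(\tilde{\mathfrak{d}}\mathfrak{b}\) is integral. The same existence argument is implicitly used in Proposition \ref{41}.

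\emph{Step 4: the consequence for \(\mathcal{C}'\).} Suppose \(\varphi\) were also Euclidean for \(\mathcal{C}'\). Since \(\text{Int}(J_K^{\mathfrak m})\) contains ideals other than \((1)\), and ordinals are well-ordered, a minimal-value \(\mathfrak{b}\neq(1)\) exists. By the first part applied to both systems, \(\mathfrak{b}\) lies in some class of \(\mathcal{C}\) and also in some class of \(\mathcal{C}'\). This contradicts \(\mathcal{C}\cap\mathcal{C}'=\emptyset\).
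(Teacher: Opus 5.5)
Your proof is correct and follows the paper's argument: you apply the Euclidean condition to a tuple in $I(\mathcal{C},\mathfrak{b})$, use the minimality of $\varphi(\mathfrak{b})$ to force $\mathfrak{x}_i\mathfrak{b}=(1)$, and read off $\mathfrak{b}\in C_i$ from $S_{\mathfrak m}(\tilde{\mathfrak c}_i)\subset C_i^{-1}$. Your Steps 3 and 4 (nonemptiness of $I(\mathcal{C},\mathfrak{b})$, and the deduction of the claim about $\mathcal{C}'$) make explicit what the paper leaves implicit, and the appeal to Proposition \ref{41} in Step 1 is harmless but unnecessary, since minimality among ideals other than $(1)$ already rules out $\mathfrak{x}_i\mathfrak{b}\neq(1)$.
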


\begin{proof}
Take \((\tilde{\mathfrak{c}}_1,\dots,\tilde{\mathfrak{c}}_r)\in I(\mathcal{C},\mathfrak{b})\). Then there exist \(i=1,\dots,r\) and \(\mathfrak{x}_i\in S_{\mathfrak{m}}(\tilde{\mathfrak{c}}_i)\) such that \(\varphi(\mathfrak{x}_i\mathfrak{b})<\varphi(\mathfrak{b})\). By the minimality of \(\varphi(\mathfrak{b})\), we have \(\mathfrak{x}_i\mathfrak{b}=(1)\). Hence \(\mathfrak{b}\in [\mathfrak{x}_i^{-1}]=C_i\).
\\
\end{proof}

We define Motzkin hierarchy for systems of ray classes.
\\

\begin{defn}\label{44} \n
Define
\begin{align*}
A_{\mathcal{C},0}&=\{(1)\}, \\
A_{\mathcal{C},\mu}&=\begin{cases} A_{\mathcal{C},\mu-1}\cup \left\{\mathfrak{b}\in \text{Int}(J_K^{\mathfrak{m}})~\middle|~\begin{array}{l}
\forall (\tilde{\mathfrak{c}}_1,\dots,\tilde{\mathfrak{c}}_r)\in I(\mathcal{C},\mathfrak{b}), \\
\exists i=1,\dots,r,~\exists \mathfrak{x}_i\in S_{\mathfrak{m}}(\tilde{\mathfrak{c}}_i) \\
\text{such that }\mathfrak{x}_i\mathfrak{b}\in A_{\mathcal{C},\mu-1}\end{array}\right\} & (\mu\text{ is a successor ordinal}) \\ 
\ds \bigcup_{\nu<\mu}A_{\mathcal{C},\nu} & (\mu\text{ is a limit ordinal})
\end{cases}
\end{align*}
We call this the \textit{Motzkin hierarchy} of \(\mathcal{C}\). We write
\[
A_{\mathcal{C},\textbf{ON}}=\{\mathfrak{b}\in \text{Int}(J_K^{\mathfrak{m}})~|~\exists \mu\in\textbf{ON},~\mathfrak{b}\in A_{\mathcal{C},\mu}\},
\]
and
\[
\varphi_{\mathcal{C}}:A_{\mathcal{C},\mathbf{ON}}\to \mathbf{ON};~\mathfrak{b}\mapsto \min\{\mu~|~\mathfrak{b}\in A_{\mathcal{C},\mu}\}.
\]
\\
\end{defn}

\begin{prop}\label{45} \n
(i) For a ray class \(C\in Cl_K^{\mathfrak{m}}\),
\[
A_{C,1}-A_{C,0}=\left\{\mathfrak{p}\in \mathfrak{Spec}(\ded_K)-\{(0)\}~\middle|~\mathfrak{p}\in C,~E^{\mathfrak{m}}\to \kappa_{\mathfrak{p}}^\ast\text{ is surjective}\right\}.
\]
(ii) For a subset \(\mathcal{C}=\{C_1,\dots,C_r\}\subset Cl_K^{\mathfrak{m}}\) of ray classes,
\[
A_{\mathcal{C},1}=\bigcup_{i=1}^r A_{C_i,1}.
\]
\end{prop}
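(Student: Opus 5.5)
The plan is to reduce everything to the observation that, in the definition of \(A_{\mathcal{C},1}\), the condition \(\mathfrak{x}_i\mathfrak{b}\in A_{\mathcal{C},0}=\{(1)\}\) forces \(\mathfrak{x}_i=\mathfrak{b}^{-1}\). So \(\mathfrak{b}\neq(1)\) lies in \(A_{\mathcal{C},1}\) if and only if, for every tuple in \(I(\mathcal{C},\mathfrak{b})\), some \(i\) satisfies \(\mathfrak{b}^{-1}\in S_{\mathfrak{m}}(\tilde{\mathfrak{c}}_i)\). I will first unwind this membership.

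Write \(\tilde{\mathfrak{c}}=\mathfrak{a}/\mathfrak{d}\) with \(\mathfrak{a},\mathfrak{d}\) coprime, and suppose \(\tilde{\mathfrak{c}}\mathfrak{b}\) is integral. Then \(\mathfrak{d}\mid\mathfrak{b}\). Moreover, \(\mathfrak{b}^{-1}\in\mathfrak{d}^{-1}\text{Int}(\mathfrak{a}P_K^{\mathfrak{d}\mathfrak{m}})\) requires \(\mathfrak{d}\mathfrak{b}^{-1}\) to be integral, hence \(\mathfrak{d}=\mathfrak{b}\). In that case the condition becomes \((1)\in\mathfrak{a}P_K^{\mathfrak{b}\mathfrak{m}}\), that is, \(\mathfrak{a}\) is trivial in \(Cl_K^{\mathfrak{b}\mathfrak{m}}\).

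The tool used throughout is that every ray class of every modulus contains infinitely many prime ideals. This lets me produce integral ideals in prescribed ray classes that are coprime to any given finite set of ideals.

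\textbf{Part (i).} Let \(\mathfrak{b}\neq(1)\) be in \(A_{C,1}\).

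First I show \(\mathfrak{b}\) is prime. If \(\mathfrak{b}\) is not prime, pick a prime \(\mathfrak{p}\mid\mathfrak{b}\) with \(\mathfrak{p}\neq\mathfrak{b}\). Choose an integral \(\mathfrak{a}\) coprime to \(\mathfrak{p}\mathfrak{m}\) in the class \(C^{-1}[\mathfrak{p}]\in Cl_K^{\mathfrak{m}}\). Then \(\tilde{\mathfrak{d}}=\mathfrak{a}/\mathfrak{p}\) lies in \(I(C,\mathfrak{b})\), and its denominator \(\mathfrak{p}\) differs from \(\mathfrak{b}\). This contradicts the computation above, so \(\mathfrak{b}=\mathfrak{p}\) is prime, and \(\mathfrak{p}\nmid\mathfrak{m}\) because \(\mathfrak{b}\in J_K^{\mathfrak{m}}\).

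Next, every \(\tilde{\mathfrak{d}}\in I(C,\mathfrak{p})\) has the form \(\mathfrak{a}/\mathfrak{p}\) with \([\mathfrak{a}]=C^{-1}[\mathfrak{p}]\) in \(Cl_K^{\mathfrak{m}}\). The requirement is that every such integral \(\mathfrak{a}\) is trivial in \(Cl_K^{\mathfrak{p}\mathfrak{m}}\). Applying this to one \(\mathfrak{a}\) and projecting to \(Cl_K^{\mathfrak{m}}\) gives \(C^{-1}[\mathfrak{p}]=1\), i.e. \(\mathfrak{p}\in C\).

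Then the requirement says the fibre over the trivial class of the surjection \(Cl_K^{\mathfrak{p}\mathfrak{m}}\to Cl_K^{\mathfrak{m}}\) is a single point. That fibre is the kernel, which I identify with \(\kappa_{\mathfrak{p}}^\ast/\mathrm{Im}(E^{\mathfrak{m}})\) via the standard exact sequence for enlarging the modulus by a prime coprime to \(\mathfrak{m}\). Hence the map \(E^{\mathfrak{m}}\to\kappa_{\mathfrak{p}}^\ast\) must be surjective.

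Conversely, if \(\mathfrak{p}\in C\) and the unit map is surjective, every admissible \(\tilde{\mathfrak{d}}\) has denominator exactly \(\mathfrak{p}\) and a numerator that is trivial mod \(\mathfrak{p}\mathfrak{m}\). Therefore \(\mathfrak{p}^{-1}\in S_{\mathfrak{m}}(\tilde{\mathfrak{d}})\), and \(\mathfrak{p}\in A_{C,1}\).

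\textbf{Part (ii), inclusion \(\supseteq\).} Let \(\mathfrak{p}\in A_{C_i,1}\). Given a tuple \(\bigl(\tilde{\mathfrak{d}}\prod_{j\neq k}\mathfrak{c}_j\bigr)_k\in I(\mathcal{C},\mathfrak{p})\), its \(i\)-th entry has class \(C_i^{-1}\). It is nonintegral and its product with \(\mathfrak{p}\) is integral, because the \(\mathfrak{c}_j\) are coprime to \(\mathfrak{p}\). So this entry lies in \(I(C_i,\mathfrak{p})\), and part (i) gives \(\mathfrak{p}^{-1}\) in its \(S_{\mathfrak{m}}\).

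\textbf{Part (ii), inclusion \(\subseteq\).} This is the main obstacle, because a single tuple must defeat all indices \(i\) simultaneously. Let \(\mathfrak{b}\neq(1)\) be in \(A_{\mathcal{C},1}\).

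If \(\mathfrak{b}\) is not prime, take \(\tilde{\mathfrak{d}}=\mathfrak{a}/\mathfrak{p}\) with \(\mathfrak{p}\) a proper prime factor of \(\mathfrak{b}\). The denominator of every entry \(\tilde{\mathfrak{d}}\prod_{j\neq i}\mathfrak{c}_j\) is then \(\mathfrak{p}\neq\mathfrak{b}\), so all indices fail at once. Hence \(\mathfrak{b}=\mathfrak{p}\) is prime.

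Now suppose \(\mathfrak{p}\notin A_{C_i,1}\) for every \(i\). I would build one tuple that fails for each \(i\): fix pairwise coprime \(\mathfrak{c}_j\) coprime to \(\mathfrak{p}\mathfrak{m}\), and choose the numerator \(\mathfrak{a}\) of \(\tilde{\mathfrak{d}}\) so that, for every \(i\), the ideal \(\mathfrak{a}\prod_{j\neq i}\mathfrak{c}_j\) is nontrivial in \(Cl_K^{\mathfrak{p}\mathfrak{m}}\).

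I expect two cases. If \(\mathfrak{p}\notin C_i\), the \(i\)-th entry is already nontrivial in \(Cl_K^{\mathfrak{m}}\), so this index fails automatically. If \(\mathfrak{p}\in C_i\) but the unit map is not surjective, the relevant fibre has more than one element. I then choose the class of \(\mathfrak{a}\) in \(Cl_K^{\mathfrak{p}\mathfrak{m}}\), or adjust the \(\mathfrak{c}_j\) within their classes, so that each of these finitely many products avoids the trivial class.

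The key point is a counting argument. The products for different \(i\) differ by the fixed ratios \(\mathfrak{c}_i/\mathfrak{c}_k\), so the classes of \(\mathfrak{a}\) to be avoided form one class per index \(i\), each inside a kernel coset of size at least \(2\). I plan to pick \(\mathfrak{a}\) in the fibre over \(C^{-1}[\mathfrak{p}]\), where \(C=C_1\cdots C_r\); the \(i\)-th product lies in \(Cl_K^{\mathfrak{p}\mathfrak{m}}\) over \(C_i^{-1}[\mathfrak{p}]\). If the counting still leaves collisions, I would try letting the \(\mathfrak{c}_j\) vary within their ray classes mod \(\mathfrak{p}\mathfrak{m}\) to separate them. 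Once such an \(\mathfrak{a}\) exists, the resulting tuple has no index \(i\) with \(\mathfrak{p}^{-1}\) in \(S_{\mathfrak{m}}\) of the \(i\)-th entry, contradicting \(\mathfrak{p}\in A_{\mathcal{C},1}\), which gives \(\subseteq\).
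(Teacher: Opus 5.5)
Your part (i) and the inclusion $\supseteq$ in (ii) follow the paper's argument essentially step for step. The only cosmetic difference is that you name the kernel of $Cl_K^{\mathfrak{p}\mathfrak{m}}\to Cl_K^{\mathfrak{m}}$ as $\kappa_{\mathfrak{p}}^\ast/\mathrm{Im}(E^{\mathfrak{m}})$, which is exactly what the paper's computation of $P_K^{\mathfrak{m}}\cap J_K^{\mathfrak{p}\mathfrak{m}}$ modulo $P_K^{\mathfrak{p}\mathfrak{m}}$ amounts to.

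The gap is in the inclusion $\subseteq$ of (ii): you never show that the required numerator $\mathfrak{a}$ exists. You announce a ``counting argument'' with one forbidden class per index. You then add a fallback (``if the counting still leaves collisions, I would try letting the $\mathfrak{c}_j$ vary''), but you carry out neither. As written, this direction is a plan rather than a proof.

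The missing idea is that $C_1,\dots,C_r$ are distinct elements of $Cl_K^{\mathfrak{m}}$, so the prime $\mathfrak{p}$ lies in at most one of them.

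\begin{itemize}
\item Since $\mathfrak{a}$ lies over $(C_1\cdots C_r)^{-1}[\mathfrak{p}]$, the $k$-th numerator $\mathfrak{a}\prod_{j\neq k}\mathfrak{c}_j$ lies over $C_k^{-1}[\mathfrak{p}]$ in $Cl_K^{\mathfrak{m}}$. This class is trivial only for the single index $i$ with $\mathfrak{p}\in C_i$.
\item If there is no such $i$, every tuple already defeats all indices.
\item Otherwise there is exactly one forbidden class, $\bigl[\prod_{j\neq i}\mathfrak{c}_j\bigr]^{-1}$. It lies in a fibre of $Cl_K^{\mathfrak{p}\mathfrak{m}}\to Cl_K^{\mathfrak{m}}$ of size $[\kappa_{\mathfrak{p}}^\ast:\mathrm{Im}(E^{\mathfrak{m}})]$. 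This size is at least $2$ by your part (i), since $\mathfrak{p}\in C_i$ but $\mathfrak{p}\notin A_{C_i,1}$.
\item Any integral $\mathfrak{a}$ in another class of that fibre gives a tuple in $I(\mathcal{C},\mathfrak{p})$ defeating every index. No adjustment of the $\mathfrak{c}_j$ is needed, and the collisions you worry about cannot occur.
\end{itemize}

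This is precisely the paper's pivot, used directly rather than contrapositively. From one tuple the paper gets $\mathfrak{b}\in C_i$ and hence $\mathfrak{b}\notin C_j$ for $j\neq i$, so index $i$ must succeed for every tuple. It then uses the Chinese remainder theorem to pick $\alpha\in\bigl(\prod_{k\neq i}\mathfrak{c}_k-\mathfrak{b}\bigr)\cap K^{\mathfrak{m}}$ with prescribed residue modulo $\mathfrak{b}$. This makes the $i$-th entry equal to $(\alpha)/\mathfrak{b}$, and surjectivity of $E^{\mathfrak{m}}\to\kappa_{\mathfrak{b}}^\ast$ follows directly, without the kernel identification. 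Once you add the distinctness observation, your contrapositive version is complete and equivalent.
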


\begin{proof}
(i) Suppose that \((1)\neq \mathfrak{b}\in A_{C,1}\) is not a prime ideal. Choose \(\mathfrak{b}\subsetneq \mathfrak{b}'\subsetneq (1)\), and choose \(\tilde{\mathfrak{a}}\in \text{Int}([\mathfrak{b}']C^{-1})\) which is relatively prime to \(\mathfrak{b}'\). Then \(\ds \frac{\tilde{\mathfrak{a}}}{\mathfrak{b}'}\in I(C,\mathfrak{b})\). Hence there exists \(\ds \mathfrak{x}\in S_{\mathfrak{m}}\left(\frac{\tilde{\mathfrak{a}}}{\mathfrak{b}'}\right)\) such that \(\mathfrak{x}\mathfrak{b}\in A_{C,0}={(1)}\). Thus
\[
\mathfrak{b}^{-1}=\mathfrak{x}\in S_{\mathfrak{m}}\left(\frac{\tilde{\mathfrak{a}}}{\mathfrak{b}'}\right)={\mathfrak{b}'}^{-1}
\text{Int}(\tilde{\mathfrak{a}}P_K^{\mathfrak{b}'\mathfrak{m}}).
\]
This is a contradiction, since the denominator of \(\mathfrak{b}^{-1}\) is strictly larger than the denominator \({\mathfrak{b}'}^{-1}\). Therefore
\[
A_{C,1}-A_{C,0}\subset\mathfrak{Spec}(\ded_K)-{(0)}.
\]

For any prime ideal \(\mathfrak{p}\nmid\mathfrak{m}\), the definition of \(A_{C,1}\) gives
\[
\mathfrak p\in A_{C,1}~\Longleftrightarrow~\forall \tilde{\mathfrak{c}}\in I(C,\mathfrak{p}),~\mathfrak{p}^{-1}\in S_{\mathfrak{m}}(\tilde{\mathfrak{c}}).
\]
Writing \(\ds \tilde{\mathfrak{c}}=\frac{\tilde{\mathfrak{a}}}{\mathfrak{p}}\), this is equivalent to
\[
\forall \tilde{\mathfrak{a}}\in \text{Int}(\mathfrak{p} C^{-1})\text{ with } \mathfrak{p}\nmid \tilde{\mathfrak{a}},~(1)\in \text{Int}(\tilde{\mathfrak{a}}P_K^{\mathfrak{p}\mathfrak{m}}).
\]
Equivalently,
\[
\text{Int}(\mathfrak{p} C^{-1})\cap J_K^{\mathfrak{p}\mathfrak{m}}\subset P_K^{\mathfrak{p}\mathfrak{m}}.
\]
This condition holds if and only if
\[
\mathfrak{p}\in C~\text{and}~P_K^{\mathfrak{m}}\cap J_K^{\mathfrak{p}\mathfrak{m}}\subset P_K^{\mathfrak{p}\mathfrak{m}}.
\]
The latter inclusion says precisely that every principal ideal generated by an element congruent to \(1\) modulo \(\mathfrak{m}\) and prime to \(\mathfrak{p}\mathfrak{m}\) has a generator congruent to \(1\) modulo \(\mathfrak{p}\mathfrak{m}\). In other words, if
\[
K^{\mathfrak{m}}_{(\mathfrak{p})}:=\{\alpha\in K^{\mathfrak{m}}~|~v_{\mathfrak{p}}(\alpha)=0\},
\]
then for every \(\alpha\in K^{\mathfrak{m}}_{(\mathfrak{p})}\), there exist \(\beta\in K^{\mathfrak{p}\mathfrak{m}}\) and \(\varepsilon\in E^{\mathfrak{m}}\) such that \(\alpha=\varepsilon\beta\). Equivalently, the natural map
\[
E^{\mathfrak{m}}\longrightarrow K^{\mathfrak{m}}_{(\mathfrak{p})}/K^{\mathfrak{p}\mathfrak{m}}
\]
is surjective. Since the reduction map induces an isomorphism \(K^{\mathfrak{m}}_{(\mathfrak{p})}/K^{\mathfrak{p}\mathfrak{m}}\cong\kappa_{\mathfrak{p}}^{\ast}\), this is equivalent to the surjectivity of \(E^{\mathfrak{m}}\to \kappa_{\mathfrak{p}}^{\ast}\). We obtain
\[
\mathfrak{p}\in A_{C,1}~\Longleftrightarrow~\mathfrak{p}\in C\text{ and }E^{\mathfrak{m}}\to \kappa_{\mathfrak{p}}^{\ast}\text{ is surjective}.
\]
Therefore,
\[
A_{C,1}-A_{C,0}=\left\{\mathfrak{p}\in \mathfrak{Spec}(\ded_K)-\{(0)\}~\middle|~\mathfrak{p}\in C,~E^{\mathfrak{m}}\to \kappa_{\mathfrak{p}}^\ast\text{ is surjective}\right\}.
\]

(ii) We have
\begin{align*}
A_{\mathcal{C},1}&=\left\{\mathfrak{b}\in \text{Int}(J_K^{\mathfrak{m}})~\middle|~
\begin{array}{l}
\forall (\tilde{\mathfrak{c}}_1,\dots,\tilde{\mathfrak{c}}_r)\in I(\mathcal{C},\mathfrak{b}), \\
\exists i=1,\dots,r,~\exists \mathfrak{x}_i\in S_{\mathfrak{m}}(\tilde{\mathfrak{c}}_i) \\
\text{ such that }\mathfrak{x}_i\mathfrak{b}=(1)
\end{array}
\right\} \\
&=\left\{\mathfrak{b}\in \text{Int}(J_K^{\mathfrak{m}})~\Bigg|~\forall (\tilde{\mathfrak{c}}_1,\dots,\tilde{\mathfrak{c}}_r)\in I(\mathcal{C},\mathfrak{b}),~\mathfrak{b}^{-1}\in \bigcup_{i=1}^r S_{\mathfrak{m}}(\tilde{\mathfrak{c}}_i)\right\}.
\end{align*}
For any \(i=1,\dots,r\) and \(\mathfrak{p}\in A_{C_i,1}\), and for any \((\tilde{\mathfrak{c}}_1,\dots,\tilde{\mathfrak{c}}_r)\in I(\mathcal{C},\mathfrak{p})\), we have \(\tilde{\mathfrak{c}}_i\in I(C_i,\mathfrak{p})\). Hence \(\mathfrak{p}^{-1}\in S_{\mathfrak{m}}(\tilde{\mathfrak{c}}_i)\), and so \(\mathfrak{p}\in A_{\mathcal{C},1}\). Thus 
\[
\bigcup_{i=1}^r A_{C_i,1}\subset A_{\mathcal{C},1}.
\]

Conversely, let \((1)\neq \mathfrak{b}\in A_{\mathcal{C},1}\). Choose \((\tilde{\mathfrak{c}}_1,\dots,\tilde{\mathfrak{c}}_r)\in I(\mathcal{C},\mathfrak{b})\). Then there exists \(i=1,\dots,r\) such that \(\mathfrak{b}^{-1}\in S_{\mathfrak{m}}(\tilde{\mathfrak{c}}_i)\). Then, \(\mathfrak{b}^{-1}\in S_{\mathfrak{m}}(\tilde{\mathfrak{c}}_i)\subset C_i^{-1}\), and hence \(\mathfrak{b}\in C_i\). Thus \(\ds \mathfrak{b}\notin \bigcup_{j\neq i}C_j\), and for any \((\tilde{\mathfrak{c}}_1,\dots,\tilde{\mathfrak{c}}_r)\in I(\mathcal{C},\mathfrak{b})\), we have \(\mathfrak{b}^{-1}\in S_{\mathfrak{m}}(\tilde{\mathfrak{c}}_i)\).

Suppose that \(\mathfrak{b}\) is not a prime ideal. Choose ideals \(\mathfrak{c}_j\in \text{Int}(C_j)\) such that \(\mathfrak{b},\mathfrak{c}_1,\dots,\mathfrak{c}_r\) are pairwise coprime. Choose \(\mathfrak{b}\subsetneq \mathfrak{b}'\subsetneq (1)\), and choose \(\tilde{\mathfrak{a}}\in\text{Int}([\mathfrak{b}'](C_1\cdots C_r)^{-1})\) which is relatively prime to \(\mathfrak{b}'\). Put
\[
(\tilde{\mathfrak{c}}_1,\dots,\tilde{\mathfrak{c}}_r)=\left(\frac{\tilde{\mathfrak{a}}}{\mathfrak{b}'}\prod_{k\neq i}\mathfrak{c}_k
\right)_i\in I(\mathcal{C},\mathfrak{b}).
\]
Then
\[
\mathfrak{b}^{-1}\in S_{\mathfrak{m}}\left(\frac{\tilde{\mathfrak{a}}}{\mathfrak{b}'}\prod_{k\neq i}\mathfrak{c}_k\right)={\mathfrak{b}'}^{-1}
\text{Int}\left(\tilde{\mathfrak{a}}\prod_{k\neq i}\mathfrak{c}_kP_K^{\mathfrak{b}'\mathfrak{m}}\right).
\]
This is a contradiction, since the denominator of \(\mathfrak{b}^{-1}\) is strictly larger than the denominator \({\mathfrak{b}'}^{-1}\). Therefore \(\mathfrak{b}\) is a prime ideal.

Furthermore, for any \(\overline{\alpha}\in \kappa_{\mathfrak{b}}^\ast\), by the Chinese remainder theorem, there exists \(\ds \alpha\in
\left(\prod_{k\neq i}\mathfrak{c}_k-\mathfrak{b}\right)\cap K^{\mathfrak{m}}\) such that \((\alpha \bmod \mathfrak{b})=\overline{\alpha}\). Put
\[
\tilde{\mathfrak{a}}=\alpha\left(\prod_{k\neq i}\mathfrak{c}_k\right)^{-1}\in \text{Int}(C_i(C_1\cdots C_r)^{-1}).
\]
Since
\[
\mathfrak{b}^{-1}\in S_{\mathfrak{m}}\left(\frac{\tilde{\mathfrak{a}}}{\mathfrak{b}}\prod_{k\neq i}\mathfrak{c}_k\right)=S_{\mathfrak{m}}\left(\frac{(\alpha)}{\mathfrak{b}}\right)=\mathfrak{b}^{-1}\text{Int}((\alpha)P_K^{\mathfrak{b}\mathfrak{m}}),
\]
we have \((1)\in \text{Int}((\alpha)P_K^{\mathfrak{b}\mathfrak{m}})\). Therefore \((\alpha)\in P_K^{\mathfrak{b}\mathfrak{m}}\). Hence there exist \(\beta\in K^{\mathfrak{b}\mathfrak{m}}~\text{and}~\varepsilon\in E_K\) such that \(\alpha=\varepsilon\beta\). Then \(\ds \varepsilon=\frac{\alpha}{\beta}\in E^{\mathfrak{m}}\), and \(\varepsilon\equiv \alpha \pmod{\mathfrak{b}}\). Thus \(E^{\mathfrak{m}}\to \kappa_{\mathfrak{b}}^\ast\) is surjective. Therefore \(\mathfrak{b}\in A_{C_i,1}\). Hence
\[
A_{\mathcal{C},1}\subset \bigcup_{i=1}^r A_{C_i,1}.
\]
\\
\end{proof}

\begin{prop}\label{46} \n
The Motzkin hierarchy of \(\mathcal{C}\) becomes stationary at \(A_{\mathcal{C},\omega}\).
\end{prop}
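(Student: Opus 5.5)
The plan is to show that \(A_{\mathcal{C},\omega+1}=A_{\mathcal{C},\omega}\). Transfinite induction then does the rest: if \(A_{\mathcal{C},\mu}=A_{\mathcal{C},\omega}\), the defining condition for \(A_{\mathcal{C},\mu+1}\) is literally the condition for \(A_{\mathcal{C},\omega+1}\), so \(A_{\mathcal{C},\mu+1}=A_{\mathcal{C},\omega}\); and limit stages are unions of equal sets. The filtration is increasing by construction, since each successor stage contains the previous one and limit stages are unions. Hence it suffices to take \(\mathfrak{b}\in A_{\mathcal{C},\omega+1}\) with \(\mathfrak{b}\neq(1)\) and find a finite \(N\) with \(\mathfrak{b}\in A_{\mathcal{C},N+1}\).

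The key step is a finiteness statement, which follows Motzkin's original argument. As \((\tilde{\mathfrak{c}}_1,\dots,\tilde{\mathfrak{c}}_r)\) ranges over \(I(\mathcal{C},\mathfrak{b})\), the tuple of sets \(\bigl(S_{\mathfrak{m}}(\tilde{\mathfrak{c}}_1),\dots,S_{\mathfrak{m}}(\tilde{\mathfrak{c}}_r)\bigr)\) takes only finitely many values. To see this, write \(\tilde{\mathfrak{c}}_i=\tilde{\mathfrak{d}}\prod_{j\neq i}\mathfrak{c}_j\). The \(\mathfrak{c}_j\) are integral and coprime to \(\mathfrak{b}\), and \(\tilde{\mathfrak{d}}\mathfrak{b}\) is integral. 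Hence the denominator \(\mathfrak{d}_i\) of \(\tilde{\mathfrak{c}}_i\) is a divisor of \(\mathfrak{b}\), and a nonzero ideal has only finitely many divisors. The numerator \(\mathfrak{a}_i\) is coprime to \(\mathfrak{d}_i\) and also prime to \(\mathfrak{m}\), because \(\tilde{\mathfrak{c}}_i\) is prime to \(\mathfrak{m}\); so \(\mathfrak{a}_i\in J_K^{\mathfrak{d}_i\mathfrak{m}}\). Now \(S_{\mathfrak{m}}(\tilde{\mathfrak{c}}_i)=\mathfrak{d}_i^{-1}\mathrm{Int}(\mathfrak{a}_iP_K^{\mathfrak{d}_i\mathfrak{m}})\) depends only on the pair \((\mathfrak{d}_i,[\mathfrak{a}_i])\), where \([\mathfrak{a}_i]\) is the class in the finite group \(Cl_K^{\mathfrak{d}_i\mathfrak{m}}\). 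This gives finitely many possibilities for each \(i\), and so finitely many tuples.

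Next, use the hypothesis \(\mathfrak{b}\in A_{\mathcal{C},\omega+1}\). For each of the finitely many realized tuples of sets \((S_1,\dots,S_r)\), there exist \(i\) and \(\mathfrak{x}_i\in S_i\) with \(\mathfrak{x}_i\mathfrak{b}\in A_{\mathcal{C},\omega}=\bigcup_n A_{\mathcal{C},n}\). Since \(\mathfrak{x}_i\mathfrak{b}\) lies in some finite stage \(A_{\mathcal{C},n}\), fix one such \(n\) for each tuple. Let \(N\) be the maximum of these finitely many \(n\). By monotonicity, every \(\mathfrak{x}_i\mathfrak{b}\) so chosen lies in \(A_{\mathcal{C},N}\). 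This choice depends only on the tuple of sets, so it works simultaneously for all \((\tilde{\mathfrak{c}}_1,\dots,\tilde{\mathfrak{c}}_r)\in I(\mathcal{C},\mathfrak{b})\), and therefore \(\mathfrak{b}\in A_{\mathcal{C},N+1}\subset A_{\mathcal{C},\omega}\).

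The main obstacle is the finiteness step: one must check carefully that the denominator of \(\tilde{\mathfrak{c}}_i\) really divides \(\mathfrak{b}\), which uses the pairwise coprimality of \(\mathfrak{b},\mathfrak{c}_1,\dots,\mathfrak{c}_r\) built into \(D(\mathcal{C},\mathfrak{b})\). One must also check that \(S_{\mathfrak{m}}\) depends only on the ray class of the numerator modulo \(\mathfrak{d}_i\mathfrak{m}\). Both follow directly from the definitions of \(I(\mathcal{C},\mathfrak{b})\) and \(S_{\mathfrak{m}}\), and the remainder of the argument is formal.
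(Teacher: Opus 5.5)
Your argument is correct and has the same skeleton as the paper's proof. For fixed $\mathfrak{b}$, you show that the tuples $\bigl(S_{\mathfrak m}(\tilde{\mathfrak c}_1),\dots,S_{\mathfrak m}(\tilde{\mathfrak c}_r)\bigr)$ take only finitely many values, then take the maximum of finitely many finite stages. The difference is in the finiteness step, which is the only real content.

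The paper fixes pairwise coprime representatives $\mathfrak c_i\in C_i$ with product $\mathfrak c$. It then asserts that $E_K\gamma+\mathfrak c\mapsto\bigl(S_{\mathfrak m}(\gamma\mathfrak c_i^{-1})\bigr)_i$ maps the finite set $E_K\backslash(\mathfrak b^{-1}\mathfrak c-\mathfrak c)/\mathfrak c$ onto the set of tuples, in analogy with Motzkin's and Treatman's residue-class argument.

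You instead count coordinatewise. $S_{\mathfrak m}(\tilde{\mathfrak c}_i)$ is determined by its denominator, a divisor of $\mathfrak b$, and by the class of its numerator in the finite group $Cl_K^{\mathfrak d_i\mathfrak m}$. So each coordinate takes at most $\sum_{\mathfrak d\mid\mathfrak b}\#Cl_K^{\mathfrak d\mathfrak m}$ values.

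Your version is more robust in the ray-class setting, because it never parametrizes $I(\mathcal C,\mathfrak b)$ by elements $\gamma$. The paper's map needs extra justification here, for two reasons:
\begin{itemize}
\item Translating $\gamma$ by elements of $\mathfrak c$ need not preserve the condition that $(\gamma)\mathfrak c^{-1}$ lies in the ray class $(C_1\cdots C_r)^{-1}$. One should really work modulo $\mathfrak c\mathfrak m$, as in Lemma \ref{31}.
\item $I(\mathcal C,\mathfrak b)$ lets the representatives $\mathfrak c_j$ vary. For a fixed choice, the classes of $\mathfrak c_{i'}\mathfrak c_i^{-1}$ in $Cl_K^{\mathfrak d\mathfrak m}$, which govern how the coordinates differ, do not depend on $\gamma$, so surjectivity is not automatic.
\end{itemize}
In exchange, the paper's version keeps the link with the classical picture of division with remainder modulo $\mathfrak c$.

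You also make explicit the transfinite induction giving $A_{\mathcal C,\mu}=A_{\mathcal C,\omega}$ for all $\mu\ge\omega$, which the paper leaves implicit. One minor point: the fact that the denominator of $\tilde{\mathfrak c}_i$ divides $\mathfrak b$ does not need the coprimality built into $D(\mathcal C,\mathfrak b)$. Multiplying $\tilde{\mathfrak d}$ by an integral ideal can only shrink its denominator.
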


\begin{proof}
Choose pairwise coprime ideals \(\mathfrak{c}_i\in C_i\), and put \(\ds \mathfrak{c}=\prod_{i=1}^r\mathfrak{c}_i\). Then, for any \(\mathfrak{b}\in A_{\mathcal{C},\omega+1}\), the map 
\[
E_K\backslash (\mathfrak{b}^{-1}\mathfrak{c}-\mathfrak{c})/\mathfrak{c}\twoheadrightarrow\left\{(S_{\mathfrak{m}}(\tilde{\mathfrak{c}}_i))_i~\middle|~(\tilde{\mathfrak{c}}_1,\dots,\tilde{\mathfrak{c}}_r)\in I(\mathcal{C},\mathfrak{b})\right\},~E_K\gamma+\mathfrak{c}\mapsto \left(S_{\mathfrak{m}}(\gamma\mathfrak{c}_i^{-1})\right)_i
\]
is surjective. Hence, by the assumption, the set
\[
\left\{(S_{\mathfrak{m}}(\tilde{\mathfrak{c}}_i))_i~\middle|~(\tilde{\mathfrak{c}}_1,\dots,\tilde{\mathfrak{c}}_r)\in I(\mathcal{C},\mathfrak{b})\right\}
\]
is finite. Write this set \(\{(S_i^{(1)})_i,\dots,(S_i^{(s)})_i\}\). Then, for each \(j=1,\dots,s\), there exist \(i_j=1,\dots,r\) and \(\mathfrak{x}_{j,i_j}\in S_{i_j}^{(j)}\) such that \(\mathfrak{x}_{j,i_j}\mathfrak{b}\in A_{\mathcal{C},\omega}\). Therefore there exist \(n_{j,i}\in\mathbb{N}\) such that \(\mathfrak{x}_{j,i_j}\mathfrak{b}\in A_{\mathcal{C},n_{j,i}}\). Hence \(\mathfrak{b}\in A_{\mathcal{C},\max_{j,i}{n_{j,i}}+1}\subset A_{\mathcal{C},\omega}\). Thus
\[
A_{\mathcal{C},\omega+1}=A_{\mathcal{C},\omega}.
\]
\\
\end{proof}

\begin{prop}\label{47} \n
\[
\mathcal{C}\text{ is Euclidean}~\Leftrightarrow~\text{Int}(J_K^{\mathfrak{m}})=A_{\mathcal{C},\omega}~\Leftrightarrow~\mathcal{C}\text{ is }\mathbb{N}\text{-valued Euclidean}.
\]
In this case, \(\varphi_{\mathcal{C}}\) is the minimal Euclidean function for \(\mathcal{C}\).
\end{prop}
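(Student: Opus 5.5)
We prove the cycle of implications: \(\mathbb{N}\)-valued Euclidean \(\Rightarrow\) Euclidean \(\Rightarrow\) \(\text{Int}(J_K^{\mathfrak m})=A_{\mathcal C,\omega}\) \(\Rightarrow\) \(\mathbb{N}\)-valued Euclidean. We prove minimality of \(\varphi_{\mathcal C}\) along the way. The first implication is trivial, since \(\mathbb{N}\subset\mathbf{ON}\).

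For the second implication, let \(\varphi\) be an ordinal-valued Euclidean function for \(\mathcal C\). We show by transfinite induction on \(\mu\) that every \(\mathfrak b\) with \(\varphi(\mathfrak b)\le\mu\) lies in \(A_{\mathcal C,\mu'}\) for some ordinal \(\mu'\le\mu\). More cleanly, we show \(\{\mathfrak b\mid \varphi(\mathfrak b)<\mu\}\subset A_{\mathcal C,\mu}\), possibly after shifting by the minimum of \(\text{Im}(\varphi)\); Proposition \ref{41} tells us that the minimum is attained only at \((1)\in A_{\mathcal C,0}\). The inductive step reads off the definition directly. If \(\mathfrak b\neq(1)\), then for every tuple in \(I(\mathcal C,\mathfrak b)\) there are \(i\) and \(\mathfrak x_i\) with \(\varphi(\mathfrak x_i\mathfrak b)<\varphi(\mathfrak b)\). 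By induction, each such \(\mathfrak x_i\mathfrak b\) lies in \(A_{\mathcal C,\nu}\) for some \(\nu<\varphi(\mathfrak b)\). The witnesses may require different \(\nu\), so we take their supremum \(\lambda\le\varphi(\mathfrak b)\). If \(\varphi(\mathfrak b)\) is a successor, all witnesses lie in \(A_{\mathcal C,\varphi(\mathfrak b)-1}\), and the successor clause puts \(\mathfrak b\in A_{\mathcal C,\varphi(\mathfrak b)}\). In general, the witnesses lie in \(A_{\mathcal C,\lambda}\), so \(\mathfrak b\in A_{\mathcal C,\lambda+1}\). Hence every ideal lands in \(A_{\mathcal C,\mathbf{ON}}\). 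Proposition \ref{46} says the hierarchy is stationary at \(\omega\): \(A_{\mathcal C,\omega+1}=A_{\mathcal C,\omega}\), and then inductively \(A_{\mathcal C,\mu}=A_{\mathcal C,\omega}\) for all \(\mu\ge\omega\). Therefore \(\text{Int}(J_K^{\mathfrak m})=A_{\mathcal C,\omega}\). The same induction gives \(\varphi_{\mathcal C}(\mathfrak b)\le\varphi(\mathfrak b)\) pointwise, which will yield minimality once \(\varphi_{\mathcal C}\) is known to be Euclidean.

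For the third implication, assume \(\text{Int}(J_K^{\mathfrak m})=A_{\mathcal C,\omega}\), so \(\varphi_{\mathcal C}\) is defined everywhere and takes values in \(\mathbb N\). Here no limit ordinal below \(\omega\) occurs except \(0\). We check that \(\varphi_{\mathcal C}\) is Euclidean. If \(\mathfrak b\neq(1)\) and \(\varphi_{\mathcal C}(\mathfrak b)=n\), then \(n\ge1\) because \(A_{\mathcal C,0}=\{(1)\}\), and \(\mathfrak b\notin A_{\mathcal C,n-1}\). So \(\mathfrak b\) lies in the new part of \(A_{\mathcal C,n}\) given by the successor clause. Hence for every tuple in \(I(\mathcal C,\mathfrak b)\) there are \(i\) and \(\mathfrak x_i\in S_{\mathfrak m}(\tilde{\mathfrak c}_i)\) with \(\mathfrak x_i\mathfrak b\in A_{\mathcal C,n-1}\). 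This gives \(\varphi_{\mathcal C}(\mathfrak x_i\mathfrak b)\le n-1<n\). Combined with the pointwise inequality \(\varphi_{\mathcal C}\le\varphi\) from the previous step, this shows \(\varphi_{\mathcal C}\) is the minimal Euclidean function, and it coincides with \(\varphi_{\mathcal C,\min}\) of Proposition \ref{42}.

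The main obstacle is the induction in the second implication. One has to state the claim so that it survives limit stages and does not assume \(\varphi\) starts at \(0\). The cleanest formulation is to prove, by induction on \(\varphi(\mathfrak b)\), the statement \(\mathfrak b\in A_{\mathcal C,\varphi(\mathfrak b)-\min\text{Im}\varphi}\), interpreting the subtraction as the ordinal \(\xi\) with \(\min\text{Im}\varphi+\xi=\varphi(\mathfrak b)\). The witnesses' levels are strictly smaller, so their supremum \(\lambda\) satisfies \(\lambda+1\le\) the level of \(\mathfrak b\). Monotonicity of the hierarchy, \(A_{\mathcal C,\nu}\subset A_{\mathcal C,\mu}\) for \(\nu\le\mu\), then finishes the step; this monotonicity is immediate from the definition.
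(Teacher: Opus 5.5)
Your proposal is correct and follows essentially the paper's argument. It uses an induction placing the sublevel sets of $\varphi$ inside the Motzkin hierarchy, Proposition~\ref{46} to collapse the hierarchy onto $A_{\mathcal C,\omega}$, and the successor clause to show that $\varphi_{\mathcal C}$ is an $\mathbb N$-valued Euclidean function bounded above by every Euclidean $\varphi$.

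There is one slip in your last paragraph. The inference that the supremum $\lambda$ of strictly smaller witness levels satisfies $\lambda+1\le$ the level of $\mathfrak b$ fails at limit levels when infinitely many witnesses are involved; this is why the paper proves the strict form $\{\mathfrak b\mid\varphi(\mathfrak b)<\mu\}\subset A_{\mathcal C,\mu}$. Your main argument, placing $\mathfrak b\in A_{\mathcal C,\lambda+1}$ and then using stationarity, does not depend on this claim.
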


\begin{proof}
Suppose that \(\mathcal{C}\) is Euclidean, and choose a Euclidean function \(\varphi\) for \(\mathcal{C}\). When \(\mu=0\), at most \((1)\) can satisfy \(\varphi(\mathfrak{b})\le \mu\). Hence 
\[
\{\mathfrak{b}\in\text{Int}(J_K^{\mathfrak{m}})~|~\varphi(\mathfrak{b})\le \mu\}\subset \{(1)\}\subset A_{\mathcal{C},\mu}.\tag{$\ast$}
\]
For a natural number $\mu$, assume that the assertion $(\ast)$ holds for all \(\nu<\mu\). Let \((1)\neq \mathfrak{b}\in \text{Int}(J_K^{\mathfrak{m}})\) satisfy \(\varphi(\mathfrak{b})\le \mu\). If \(\varphi(\mathfrak{b})\le \mu-1\), then \(\mathfrak{b}\in A_{\mathcal{C},\mu-1}\subset A_{\mathcal{C},\mu}\). If \(\varphi(\mathfrak{b})=\mu\), then, for any \((\tilde{\mathfrak{c}}_1,\dots,\tilde{\mathfrak{c}}_r)\in I(\mathcal{C},\mathfrak{b})\), there exist \(i=1,\dots,r\) and \(\mathfrak{x}_i\in S_{\mathfrak{m}}(\tilde{\mathfrak{c}}_i)\) such that \(\varphi(\mathfrak{x}_i\mathfrak{b})<\varphi(\mathfrak{b})=\mu\). Hence \(\mathfrak{x}_i\mathfrak{b}\in A_{\mathcal{C},\mu-1}\), and therefore \(\mathfrak{b}\in A_{\mathcal{C},\mu}\). By induction, the assertion ($\ast$) follows for any natural number $\mu$. Consequently, the assertion
\[
\{\mathfrak{b}\in\text{Int}(J_K^{\mathfrak{m}})~|~\varphi(\mathfrak{b})<\mu\}\subset A_{\mathcal{C},\mu}.\tag{$\ast\ast$}
\]
holds, too. Next for an ordinal number $\mu$, assume that the assertion $(\ast\ast)$ holds for all \(\nu<\mu\). If $\mu$ is a limit ordinal,
\[
\{\mathfrak{b}\in\text{Int}(J_K^{\mathfrak{m}})~|~\varphi(\mathfrak{b})<\mu\}=\bigcup_{\nu<\mu}\{\mathfrak{b}\in\text{Int}(J_K^{\mathfrak{m}})~|~\varphi(\mathfrak{b})<\nu\}\subset \bigcup_{\nu<\mu}A_{\mathcal{C},\nu}=A_{\mathcal{C},\mu}.
\]
If $\mu$ is a successor ordinal, let \(\mathfrak{b}\in \text{Int}(J_K^{\mathfrak{m}})\) satisfy \(\varphi(\mathfrak{b})\le \mu-1\). If \(\varphi(\mathfrak{b})<\mu-1\), then \(\mathfrak{b}\in A_{\mathcal{C},\mu-1}\subset A_{\mathcal{C},\mu}\). If \(\varphi(\mathfrak{b})=\mu-1\), then, for any \((\tilde{\mathfrak{c}}_1,\dots,\tilde{\mathfrak{c}}_r)\in I(\mathcal{C},\mathfrak{b})\), there exist \(i=1,\dots,r\) and \(\mathfrak{x}_i\in S_{\mathfrak{m}}(\tilde{\mathfrak{c}}_i)\) such that \(\varphi(\mathfrak{x}_i\mathfrak{b})<\varphi(\mathfrak{b})=\mu-1\). Hence \(\mathfrak{x}_i\mathfrak{b}\in A_{\mathcal{C},\mu-1}\), and therefore \(\mathfrak{b}\in A_{\mathcal{C},\mu}\). Thus, in either case, ($\ast\ast$) holds on $\mu$. By transfinite induction, the assertion ($\ast\ast$) follows for all $\mu$. If \(\mu_0>\sup \varphi(\text{Int}(J_K^{\mathfrak{m}}))\), then
\[
\text{Int}(J_K^{\mathfrak{m}})=A_{\mathcal{C},\mu_0}=A_{\mathcal{C},\omega}.
\]

Conversely, suppose that \(\text{Int}(J_K^{\mathfrak{m}})=A_{\mathcal{C},\omega}\). Then, for any \((1)\neq \mathfrak{b}\in \text{Int}(J_K^{\mathfrak{m}})\), by the definition of the Motzkin hierarchy, \(\varphi_{\mathcal{C}}(\mathfrak{b})\) is a natural number. For any \((\tilde{\mathfrak{c}}_1,\dots,\tilde{\mathfrak{c}}_r)\in I(\mathcal{C},\mathfrak{b})\), there exist \(i=1,\dots,r\) and \(\mathfrak{x}_i\in S_{\mathfrak{m}}(\tilde{\mathfrak{c}}_i)\) such that \(\mathfrak{x}_i\mathfrak{b}\in A_{\varphi_{\mathcal{C}}(\mathfrak{b})-1}\). Therefore \(\varphi_{\mathcal{C}}(\mathfrak{x}_i\mathfrak{b})<\varphi_{\mathcal{C}}(\mathfrak{b})\), and so \(\varphi_{\mathcal{C}}\) is a Euclidean function for \(\mathcal{C}\). Hence \(\mathcal{C}\) is Euclidean. Then, for any Euclidean function \(\varphi\) for \(\mathcal{C}\) and any natural number \(\mu\), we have
\begin{align*}
\{\mathfrak{b}\in \text{Int}(J_K^{\mathfrak{m}})~|~\varphi(\mathfrak{b})\le \mu\}\subset A_{\mathcal{C},\mu}=\{\mathfrak{b}\in \text{Int}(J_K^{\mathfrak{m}})~|~\varphi_{\mathcal{C}}(\mathfrak{b})\le \mu\}.
\end{align*}
Hence, for any \(\mathfrak{b}\in \text{Int}(J_K^{\mathfrak{m}})\), we have \(\varphi_{\mathcal{C}}(\mathfrak{b})\le \varphi(\mathfrak{b})\). Thus \(\varphi_{\mathcal{C}}\) is the minimal Euclidean function.
\\
\end{proof}

\begin{prop}\label{48} \n
Let \(\mathcal{C}\) be a generating set of the ray class group. If \(\exists C\in Cl_K^{\mathfrak{m}}\text{ such that }\text{Int}(C)\subset A_{\mathcal{C},\omega}\), then \(\mathcal{C}\) is Euclidean.
\end{prop}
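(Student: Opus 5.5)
The plan is to show that the set of ray classes $B\in Cl_K^{\mathfrak m}$ with $\text{Int}(B)\subset A_{\mathcal C,\omega}$ is closed under multiplication by each $C_i$. Since $Cl_K^{\mathfrak m}$ is finite, the monoid generated by $\mathcal C$ is the whole group. Starting from the given class $C$, we would then get $\text{Int}(B)\subset A_{\mathcal C,\omega}$ for every $B$, i.e. $\text{Int}(J_K^{\mathfrak m})=A_{\mathcal C,\omega}$. Proposition \ref{47} then gives that $\mathcal C$ is Euclidean.

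\emph{Key step: fix $i$ and assume $\text{Int}(B)\subset A_{\mathcal C,\omega}$; we show $\text{Int}(BC_i)\subset A_{\mathcal C,\omega}$.} Let $\mathfrak b\in \text{Int}(BC_i)$. If $\mathfrak b=(1)$ there is nothing to prove, so assume $\mathfrak b\neq(1)$ and take any tuple $(\tilde{\mathfrak c}_1,\dots,\tilde{\mathfrak c}_r)=(\tilde{\mathfrak d}\prod_{j\neq k}\mathfrak c_j)_k\in I(\mathcal C,\mathfrak b)$.
\begin{itemize}
\item The ray class of $\tilde{\mathfrak c}_i$ is $(C_1\cdots C_r)^{-1}\prod_{j\neq i}C_j=C_i^{-1}$.
\item Every element of $S_{\mathfrak m}(\tilde{\mathfrak c}_i)=\mathfrak d^{-1}\text{Int}(\mathfrak a P_K^{\mathfrak d\mathfrak m})$ lies in the class of $\mathfrak a\mathfrak d^{-1}$, namely $C_i^{-1}$.
\item This set is nonempty, since every ray class modulo $\mathfrak d\mathfrak m$ contains integral ideals.
\item The $\mathfrak c_j$ are coprime to $\mathfrak b$ and $\tilde{\mathfrak d}\mathfrak b$ is integral, so the denominator $\mathfrak d$ of $\tilde{\mathfrak c}_i$ divides $\mathfrak b$. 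Hence for any $\mathfrak x_i\in S_{\mathfrak m}(\tilde{\mathfrak c}_i)$ the ideal $\mathfrak x_i\mathfrak b$ is integral, prime to $\mathfrak m$, and lies in $BC_iC_i^{-1}=B$.
\end{itemize}
So $\mathfrak x_i\mathfrak b\in\text{Int}(B)\subset A_{\mathcal C,\omega}$. Each tuple is thus handled with the same index $i$, and each such $\mathfrak x_i\mathfrak b$ lies in some finite layer $A_{\mathcal C,n}$.

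\emph{Main obstacle: uniformity of the level $n$.} We must bound $n$ uniformly over all (infinitely many) tuples in $I(\mathcal C,\mathfrak b)$. For this we would reuse the argument of Proposition \ref{46}. The tuple of sets $(S_{\mathfrak m}(\tilde{\mathfrak c}_k))_k$ depends only on the class of $\gamma$ in $E_K\backslash(\mathfrak b^{-1}\mathfrak c-\mathfrak c)/\mathfrak c$, which is finite. So only finitely many distinct tuples $(S^{(1)}_k)_k,\dots,(S^{(s)}_k)_k$ occur. For each one we choose $\mathfrak x^{(j)}_i\in S^{(j)}_i$ with $\mathfrak x^{(j)}_i\mathfrak b\in A_{\mathcal C,n_j}$. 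Then $\mathfrak b\in A_{\mathcal C,\max_j n_j+1}\subset A_{\mathcal C,\omega}$. (Equivalently, $\mathfrak b\in A_{\mathcal C,\omega+1}=A_{\mathcal C,\omega}$ by Proposition \ref{46} directly.)

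\emph{Conclusion.} By induction, $\text{Int}(C\,C_{i_1}\cdots C_{i_k})\subset A_{\mathcal C,\omega}$ for every finite word in the $C_i$. Each $C_i$ has finite order, so $C_i^{-1}$ is a positive power of $C_i$. Hence these products exhaust $C\cdot\langle\mathcal C\rangle=Cl_K^{\mathfrak m}$. Therefore $\text{Int}(J_K^{\mathfrak m})=A_{\mathcal C,\omega}$, and Proposition \ref{47} finishes the proof.
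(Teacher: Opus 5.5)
Your proof is correct and is essentially the paper's argument: propagate $\text{Int}(B)\subset A_{\mathcal C,\omega}$ to $\text{Int}(BC_i)$ by picking any $\mathfrak x_i\in S_{\mathfrak m}(\tilde{\mathfrak c}_i)$, use $A_{\mathcal C,\omega+1}=A_{\mathcal C,\omega}$ (Proposition \ref{46}), and iterate, with your finite-order remark making explicit the induction the paper leaves implicit. For the uniformity step, simply citing Proposition \ref{46} is cleanest; if you want a self-contained reason, note that each $S_{\mathfrak m}(\tilde{\mathfrak c}_i)$ is determined by its denominator $\mathfrak d$ (a divisor of $\mathfrak b$) and the class of its numerator in the finite group $Cl_K^{\mathfrak d\mathfrak m}$, which avoids having to check that the $\gamma$-coset map is well defined when $\mathfrak m\neq(1)$.
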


\begin{proof}
For any \(i=1,\dots,r\), any \(\mathfrak{b}\in \text{Int}(CC_i)\), and any \((\tilde{\mathfrak{c}}_1,\dots,\tilde{\mathfrak{c}}_r)
\in I(\mathcal{C},\mathfrak{b})\), choose \(\mathfrak{x}_i\in S_{\mathfrak{m}}(\tilde{\mathfrak{c}}_i)\). Then \(\mathfrak{x}_i\mathfrak{b}\in \text{Int}(C)\subset A_{\mathcal{C},\omega}\). Hence \(\mathfrak{b}\in A_{\mathcal{C},\omega+1}=A_{\mathcal{C},\omega}\). Therefore \(\text{Int}(CC_i)\subset A_{\mathcal{C},\omega}\). Inductively, we obtain \(\text{Int}(J_K^{\mathfrak{m}})\subset A_{\mathcal{C},\omega}\). Thus \(\mathcal{C}\) is Euclidean.
\\
\end{proof}

We now prove Theorem 1.1.
\\

\begin{thm}[Reformulation of Theorem 1.1]\label{49} \n
Let \(\pi_{\mathfrak{m}}:J_K^{\mathfrak{m}}\to Cl_K^{\mathfrak{m}}\) be the natural projection.\\
(i) For any \(C\in \pi_{\mathfrak{m}}(A_{\mathcal{C},\omega})\), we have
\[
C=\prod_{i=1}^r C_i^{n_i}~~~\left(\sum_{i=1}^r n_i\le\min\{\varphi_{\mathcal{C}}(\mathfrak{b})~|~(1)\neq \mathfrak{b}\in C\cap A_{\mathcal{C},\omega}\}\right),
\]
and in particular,
\[
\left<\pi_{\mathfrak{m}}(A_{\mathcal{C},\omega})\right> \subset\left<\mathcal{C}\right>.
\]
In particular, if \(\mathcal{C}\) is Euclidean, then \(\mathcal{C}\) is a generating set of the ray class group.
(ii) If all of
\[
A_{C_i,1}~~~(i=1,\dots,r)
\]
are non-trivial, then
\[
\left<\pi_{\mathfrak{m}}(A_{\mathcal{C},\omega})\right>=\left<\mathcal{C}\right>.
\]
\end{thm}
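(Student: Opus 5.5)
Part (i) will be proved by induction on \(n=\varphi_{\mathcal{C}}(\mathfrak{b})\) for \(\mathfrak{b}\in A_{\mathcal{C},\omega}\). By Proposition 4.6 every element of \(A_{\mathcal{C},\omega}\) has finite level, so ordinary induction suffices. The claim to establish is: for every \(\mathfrak{b}\in A_{\mathcal{C},n}\), the class \([\mathfrak{b}]\) equals \(\prod_i C_i^{n_i}\) with \(\sum n_i\le n\). Taking the minimum over \(\mathfrak{b}\in C\cap A_{\mathcal{C},\omega}\) then gives the stated bound. For \(n=0\) we have \(\mathfrak{b}=(1)\) and all \(n_i=0\).

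For the inductive step, let \(\mathfrak{b}\in A_{\mathcal{C},n}-A_{\mathcal{C},n-1}\), so \(\mathfrak{b}\neq(1)\). First we need \(I(\mathcal{C},\mathfrak{b})\neq\emptyset\), and we construct an element as in the proof of Proposition 4.5. Choose \(\mathfrak{c}_i\in\text{Int}(C_i)\) such that \(\mathfrak{b},\mathfrak{c}_1,\dots,\mathfrak{c}_r\) are pairwise coprime; this is possible because every ray class contains integral ideals prime to any given ideal. Then choose \(\tilde{\mathfrak{a}}\in\text{Int}([\mathfrak{b}](C_1\cdots C_r)^{-1})\) prime to \(\mathfrak{b}\), and set \(\tilde{\mathfrak{d}}=\tilde{\mathfrak{a}}/\mathfrak{b}\). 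This \(\tilde{\mathfrak{d}}\) lies in \((C_1\cdots C_r)^{-1}\) and is non-integral because \(\mathfrak{b}\neq(1)\) and \(\tilde{\mathfrak a}\) is prime to \(\mathfrak b\). Also \(\tilde{\mathfrak{d}}\mathfrak{b}\) is integral. The definition of \(A_{\mathcal{C},n}\) then provides an index \(i\) and some \(\mathfrak{x}_i\in S_{\mathfrak{m}}(\tilde{\mathfrak{c}}_i)\) with \(\mathfrak{x}_i\mathfrak{b}\in A_{\mathcal{C},n-1}\).

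The key class computation is this. By definition \(S_{\mathfrak m}(\tilde{\mathfrak c})=\mathfrak d^{-1}\text{Int}(\mathfrak a P_K^{\mathfrak d\mathfrak m})\subset [\tilde{\mathfrak c}]\) in \(Cl_K^{\mathfrak m}\), since \(P_K^{\mathfrak d\mathfrak m}\subset P_K^{\mathfrak m}\). Hence \([\mathfrak{x}_i]=[\tilde{\mathfrak{d}}]\prod_{j\neq i}C_j=C_i^{-1}\), which is the fact already used in Propositions 4.3 and 4.5. By induction \([\mathfrak{x}_i\mathfrak{b}]=\prod_j C_j^{m_j}\) with \(\sum m_j\le n-1\). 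Therefore \([\mathfrak{b}]=C_i\prod_j C_j^{m_j}\) with total exponent at most \(n\). This gives the inclusion \(\langle\pi_{\mathfrak m}(A_{\mathcal C,\omega})\rangle\subset\langle\mathcal C\rangle\). If \(\mathcal{C}\) is Euclidean, Proposition 4.7 gives \(A_{\mathcal{C},\omega}=\text{Int}(J_K^{\mathfrak m})\), which meets every ray class, so \(\mathcal{C}\) generates \(Cl_K^{\mathfrak m}\).

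For (ii) only the reverse inclusion remains. Suppose \(A_{C_i,1}\neq A_{C_i,0}\). By Proposition 4.5(i) there is a prime \(\mathfrak{p}_i\in C_i\cap A_{C_i,1}\), and by Proposition 4.5(ii) \(\mathfrak p_i\in A_{\mathcal{C},1}\subset A_{\mathcal{C},\omega}\). Hence \(C_i=\pi_{\mathfrak m}(\mathfrak{p}_i)\in\pi_{\mathfrak m}(A_{\mathcal C,\omega})\) for every \(i\), giving equality. The main obstacle I expect is the bookkeeping in (i): checking that \(I(\mathcal C,\mathfrak b)\) is nonempty, and that the ray class of every element of \(S_{\mathfrak m}(\tilde{\mathfrak c}_i)\) is exactly \(C_i^{-1}\), including the compatibility of the modulus \(\mathfrak d\mathfrak m\) with the class taken modulo \(\mathfrak m\).
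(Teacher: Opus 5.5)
Your proposal is correct and follows essentially the paper's argument: your induction on the level $\varphi_{\mathcal{C}}(\mathfrak{b})$ is a formalized version of the paper's iterated descent from a $\varphi_{\mathcal{C}}$-minimizer in $C$, using $[\mathfrak{x}_i]=C_i^{-1}$ at each step, and you handle part (ii) the same way via Proposition 4.5. Your explicit check that $I(\mathcal{C},\mathfrak{b})\neq\emptyset$, which the paper tacitly assumes when it writes ``choose'', is a welcome addition; the appeal to Proposition 4.6 is unnecessary, since $A_{\mathcal{C},\omega}$ is by definition the union of the finite levels.
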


\begin{proof}
(i) Let \(C\in \pi_{\mathfrak{m}}(A_{\mathcal{C},\omega}\cap J_K^{\mathfrak{m}})\). Choose an integral ideal \(\mathfrak{b}_0\) such that
\[
\varphi_{\mathcal{C}}(\mathfrak{b}_0)=\min\{\varphi_{\mathcal{C}}(\mathfrak{b})~|~(1)\neq \mathfrak{b}\in C\cap A_{\mathcal{C},\omega}\}.
\]
Choose \((\tilde{\mathfrak{c}}_1,\dots,\tilde{\mathfrak{c}}_r)\in I(\mathcal{C},\mathfrak{b}_0)\). Then there exist \(i=1,\dots,r\) and \(\mathfrak{x}_i\in S_{\mathfrak{m}}(\tilde{\mathfrak{c}}_i)\) such that \(\mathfrak{x}_i\mathfrak{b}_0\in A_{\mathcal{C},\varphi_{\mathcal{C}}(\mathfrak{b}_0)-1}\). Moreover, \([\mathfrak{x}_i\mathfrak{b}_0]=C_i^{-1}C\). This operation can be carried out inductively as long as \(\mathfrak{x}_i\mathfrak{b}_0\neq (1)\). Since the value of \(\varphi_{\mathcal{C}}\) strictly decreases, after at most \(\varphi_{\mathcal{C}}(\mathfrak{b}_0)\) steps we reach \((1)\). Hence there exist \(n_i\in\mathbb{N}\) with
\[
0\le \sum_{i=1}^r n_i\le\varphi_{\mathcal{C}}(\mathfrak{b}_0)
\]
such that \(\ds C\prod_{i=1}^r C_i^{-n_i}=[(1)]\). Therefore \(C\in \left<\mathcal{C}\right>\), and hence \(\pi_{\mathfrak{m}}(A_{\mathcal{C},\omega})\subset\left<\mathcal{C}\right>\).

(ii) Suppose that all of \(A_{C_i,1}~~(i=1,\dots,r)\) are non-trivial. For each \(i=1,\dots,r\), choose \(\mathfrak{b}_i\in A_{C_i,1}-A_{C_i,0}\). Then \([\mathfrak{b}_i]=C_i\). Therefore
\[
\left<\mathcal{C}\right>\subset\left<\pi_{\mathfrak{m}}(A_{\mathcal{C},\omega})\right>.
\]
This proves the assertion.
\\
\end{proof}

\section{A sufficient criterion for the existence of Euclidean systems}

Throughout this section, we assume GRH. 

For a finite set \(S\) of rational primes, we write
\[
K(\zeta_S)=K(\zeta_\ell~|~\ell\in S).
\]

We recall the form of Lenstra's theorem which will be used below. We state it in essentially the notation of \cite{Lenstra1977}.

Let \(K\) be a global field, let \(F/K\) be a finite Galois extension, and let
\[
C\subset \text{Gal}(F/K)
\]
be a union of conjugacy classes. Let \(W\subset K^\ast\) be a finitely generated infinite subgroup, and let \(k\) be a positive integer prime to the characteristic of \(K\). Let \(M=M(K,F,C,W,k)\) be the set of primes \(\mathfrak{p}\) of \(K\) such that
\[
(\mathfrak{p},F/K)\in C,~~~\text{ord}_{\mathfrak{p}}(w)=0~~(\text{for all }w\in W),
\]
and the index of the image of \(W\) in \(\kappa_{\mathfrak{p}}^{\ast}\) divides \(k\).

For a rational prime \(\ell\) prime to the characteristic of \(K\), let \(q(\ell)\) be the smallest power of \(\ell\) which does not divide \(k\), and put
\[
L_\ell=K(\zeta_{q(\ell)},\sqrt[q(\ell)]{W}).
\]
Let \(P_1\) be the set of all rational primes \(\ell\) such that
\[
W\subset (K^\ast)^{q(\ell)}.
\]
Put
\[
P_2=\left\{\ell~\middle|~L_{\ell}\subset F(\zeta_{P_1})\right\}.
\]
\\

\begin{thm}[Lenstra {\cite[Theorem 4.6]{Lenstra1977}}]\label{51} \n
With the notation above, if \(M\) is infinite, then there exists
\[
\tilde{\sigma}\in \text{Gal}(F(\zeta_{P_1})/K)
\]
such that
\[
\tilde{\sigma}|_F\in C
\]
and
\[
\tilde{\sigma}|_{L_\ell}\neq \text{id}_{L_\ell}
\]
for every rational prime \(\ell\in P_2\). Conversely, if such a \(\tilde{\sigma}\) exists and GRH holds, then \(M\) is infinite and has positive density.
\\
\end{thm}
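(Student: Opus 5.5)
The first thing to say is that this is Lenstra's theorem \cite[Theorem 4.6]{Lenstra1977}, and in the paper it is simply quoted. If I had to reprove it, I would split it into an unconditional necessity direction and a GRH-conditional sufficiency direction, following Hooley's method for Artin's conjecture. Both directions rest on one translation. Fix a prime \(\mathfrak{p}\) with \(\text{ord}_{\mathfrak{p}}(w)=0\) for all \(w\in W\), not dividing \(k\), and unramified in every \(L_\ell\) and in \(F(\zeta_{P_1})\). For such \(\mathfrak{p}\) and a prime power \(q\), we have \(q\mid [\kappa_{\mathfrak{p}}^\ast:\overline{W}]\) if and only if \(\mathfrak{p}\) splits completely in \(K(\zeta_q,\sqrt[q]{W})\). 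Applying this with \(q=q(\ell)\) shows that the index divides \(k\) exactly when \(\mathfrak{p}\) does not split completely in any \(L_\ell\).

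For necessity, assume \(M\) is infinite and choose \(\mathfrak{p}\in M\) outside the finitely many bad primes. Let \(\tilde{\sigma}\) be its Frobenius in \(\text{Gal}(F(\zeta_{P_1})/K)\). This makes sense because \(P_1\) is finite, since \(W\) is infinite and finitely generated. Then \(\tilde{\sigma}|_F=(\mathfrak{p},F/K)\in C\). For \(\ell\in P_2\) we have \(L_\ell\subset F(\zeta_{P_1})\), so \(\tilde{\sigma}|_{L_\ell}\) is the Frobenius of \(\mathfrak{p}\) in \(L_\ell\). By the translation above it is nontrivial.

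For sufficiency, write the counting function of \(M\) by inclusion–exclusion over finite sets \(T\) of primes \(\ell\). The term for \(T\) counts primes whose Frobenius lies in \(C\) on \(F\) and which split completely in \(L_T=\prod_{\ell\in T}L_\ell\). I would split the range of \(\ell\) into three parts, as Hooley does.
\begin{itemize}
\item For \(\ell\le \xi=\tfrac16\log x\), apply the GRH-effective Chebotarev theorem to the fields \(FL_T\). The error terms are controlled because the degrees \([FL_T:K]\) grow like \(\prod_{\ell\in T}q(\ell)^{\,\text{rk}W+1}\).
\item For \(\xi<\ell\le \sqrt{x}\log x\), use GRH Chebotarev for the single field \(L_\ell\) together with the decay of \(1/[L_\ell:K]\).
\item For \(\ell>\sqrt{x}\log x\), use the elementary bound: if \(\ell\mid[\kappa_{\mathfrak{p}}^\ast:\overline{W}]\), then \(\mathfrak{p}\) divides \(w^{(N\mathfrak{p}-1)/\ell}-1\) for a fixed non-torsion \(w\in W\), and there are only \(o(x/\log x)\) such primes.
\end{itemize}
The outcome is that \(M\) has a density equal to
\[
\delta=\sum_{T}(-1)^{|T|}\frac{|\{\sigma\in\text{Gal}(FL_T/K):\sigma|_F\in C,\ \sigma|_{L_\ell}=\text{id}\ \forall \ell\in T\}|}{[FL_T:K]},
\]
which can be rewritten as the Haar measure of the set of \(\sigma\) in the profinite group \(\text{Gal}(F\prod_\ell L_\ell/K)\) with \(\sigma|_F\in C\) and \(\sigma|_{L_\ell}\neq\text{id}\) for every \(\ell\).

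The main obstacle is proving \(\delta>0\) from the mere existence of \(\tilde{\sigma}\). My plan has three steps.
\begin{enumerate}
\item Using Kummer theory, show there is a finite set \(P_3\supseteq P_2\) such that, over \(E=F(\zeta_{P_1})\prod_{\ell\in P_3}L_\ell\), the fields \(L_\ell\) with \(\ell\notin P_3\) are linearly disjoint over \(E\). This uses that \(W\) is not an \(\ell\)-th power for \(\ell\notin P_1\) and that \(E\) contains only finitely many such roots.
\item Conclude that the density factors as the density of an open set in \(\text{Gal}(E/K)\) times \(\prod_{\ell\notin P_3}\bigl(1-1/[L_\ell E:E]\bigr)\). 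This product converges to a positive number because \([L_\ell:K]\) grows like \(\ell^{\,\text{rk}W+1}\) and \(\text{rk}W\ge 1\).
\item Show that the open set is nonempty by extending \(\tilde{\sigma}\) from \(F(\zeta_{P_1})\) to \(E\). For \(\ell\in P_2\), the required nontriviality is already forced by \(\tilde{\sigma}\). For \(\ell\in P_3\setminus P_2\), the field \(L_\ell\) is not contained in \(F(\zeta_{P_1})\), so there is freedom to choose the extension nontrivial on \(L_\ell\). The delicate part is making these finitely many choices simultaneously, which requires controlling intersections such as \(L_\ell\cap F(\zeta_{P_1})\prod_{\ell'\neq\ell}L_{\ell'}\).
\end{enumerate}
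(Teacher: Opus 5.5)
The paper gives no proof of this statement; it simply quotes Lenstra. Your outline therefore has to stand on its own, and it stops exactly at the step that is the content of Theorem 4.6.

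\emph{The missing step.} In Step 3 you note that, for each single \(\ell\in P_3\setminus P_2\), some extension of \(\tilde\sigma\) is nontrivial on \(L_\ell\). You leave the intersections uncontrolled, however, and freedom prime by prime does not give a simultaneous choice. For example, take \(K=F=\mathbb{Q}\), \(W=\langle 5\rangle\), \(k=1\). Then \(P_2=\emptyset\), but \(L_2=\mathbb{Q}(\sqrt5)\subset\mathbb{Q}(\zeta_5)\subset L_5\). So an element chosen nontrivial on \(L_5\) but fixing \(\sqrt5\) is forced to be trivial on \(L_2\).

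The missing idea, in the case \(k=1\) that the paper uses, with \(F'=F(\zeta_{P_1})\), is the following. If \(\ell\notin P_2\) and \(\zeta_\ell\notin K\), then \(L_\ell\not\subseteq F'\prod_{\ell'\in S}L_{\ell'}\) for every finite \(S\not\ni\ell\). Here is why.
\begin{itemize}
\item \(\text{Gal}(L_\ell/K)\) is an extension of \(\text{Gal}(K(\zeta_\ell)/K)\) by \(V_\ell=\text{Gal}(L_\ell/K(\zeta_\ell))\). This \(V_\ell\) is a nonzero \(\mathbb{F}_\ell\)-space, because \(\ell\notin P_1\). The quotient acts on it through the faithful, nontrivial cyclotomic character. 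Hence every nontrivial normal subgroup meets \(V_\ell\) nontrivially, and no nontrivial element of \(V_\ell\) is central.
\item Suppose \(L_\ell\subseteq F'\prod_S L_{\ell'}\). The group \(\text{Gal}(F'\prod_S L_{\ell'}/F'(\zeta_S))\) has order prime to \(\ell\). Its image in \(\text{Gal}(L_\ell/K)\) is a normal subgroup meeting \(V_\ell\) trivially, hence trivial, so \(L_\ell F'\subseteq F'(\zeta_S)\).
\item \(\text{Gal}(F'(\zeta_S)/F')\) is central in \(\text{Gal}(F'(\zeta_S)/K)\). So its image \(\text{Gal}(L_\ell/L_\ell\cap F')\) would be central in \(\text{Gal}(L_\ell/K)\). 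This image is nontrivial (as \(\ell\notin P_2\)) and normal, which contradicts the first point.
\end{itemize}

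The finitely many \(\ell\notin P_2\) with \(\zeta_\ell\in K\), for instance \(\ell=2\), give abelian Kummer extensions that can be entangled as in the example. So handle them first. Among themselves there is no containment over \(F'\): \(L_\ell F'/F'\) is an \(\ell\)-extension, while the compositum of the others has degree prime to \(\ell\) over \(F'\). Then extend \(\tilde\sigma\) one prime at a time. At each stage, either the current element is already nontrivial on \(L_\ell\cap(\text{current field})\), or \(L_\ell\) is not contained in the current field and some extension is nontrivial on \(L_\ell\).

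\emph{Two further corrections.} First, your middle analytic range fails as stated. GRH--Chebotarev has an error of \(O(\sqrt{x}\log x)\) for each \(\ell\). Summing over the \(\asymp\sqrt{x}\) primes \(\ell\le\sqrt{x}\log x\) gives a total error bound of order \(x\log x\), far larger than the main term. As in Hooley, use GRH only up to about \(\sqrt{x}/\log^2x\), and Brun--Titchmarsh (counting \(N\mathfrak{p}\equiv1\bmod\ell\)) for \(\sqrt{x}/\log^2x<\ell\le\sqrt{x}\log x\).

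Second, ``unramified in every \(L_\ell\)'' is not a finite exclusion, since primes above \(\ell\) usually ramify in \(L_\ell\). You need it only for \(\ell\) different from the residue characteristic of \(\mathfrak{p}\). For necessity, it suffices that \(\mathfrak{p}\) be unramified in \(F(\zeta_{P_1})\) and prime to the \(\ell\in P_2\). With that repair your necessity argument is correct, and the reduction in Steps 1--2 is fine.
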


We apply this theorem in the following special case. We assume that \(E^{\mathfrak{m}}\) is infinite; equivalently, \(K\) is neither \(\mathbb{Q}\) nor an imaginary quadratic field. Let \(H_K^{\mathfrak{m}}\) be the ray class field of \(K\) modulo \(\mathfrak{m}\).

We take
\[
F=H_K^{\mathfrak{m}},~~W=E^{\mathfrak{m}},~~k=1,
\]
and for a ray class \(C\in Cl_K^{\mathfrak{m}}\), we take the conjugacy class in \(\text{Gal}(H_K^{\mathfrak{m}}/K)\) to be the singleton consisting of the Artin element corresponding to \(C\). Since \(k=1\), we have
\[
q(\ell)=\ell,~~~L_\ell=K(\zeta_\ell,\sqrt[\ell]{E^{\mathfrak{m}}}).
\]
Thus the set \(P_1\) in Lenstra's theorem is 
\[
\{\ell\mid E^{\mathfrak m}\subset (K^\ast)^\ell\},
\]
and the set \(P_2\) is
\[
\{\ell~|~L_{\ell}\subset H_K^{\mathfrak{m}}(\zeta_{P_1})\}.
\]
\\

\begin{defn}\label{52} \n
Let \(C\in Cl_K^{\mathfrak m}\), and let
\[
\sigma_C\in \text{Gal}(H_K^{\mathfrak m}/K)
\]
be the Artin element corresponding to \(C\). We say that \(C\) is \textit{admissible} if there exists
\[
\tilde{\sigma}_C\in\text{Gal}(H_K^{\mathfrak{m}}(\zeta_{P_1})/K)
\]
such that
\[
\tilde{\sigma}_C|_{H_K^{\mathfrak{m}}}=\sigma_C
\]
and
\[
\tilde{\sigma}_C\notin\bigcup_{\ell\in P_2}\text{Gal}\left(H_K^{\mathfrak m}(\zeta_{P_1})/K(\zeta_\ell,\sqrt[\ell]{E^{\mathfrak{m}}})
\right).
\]
\\
\end{defn}

We use the following form of Lenstra's theorem.
\\

\begin{cor}\label{53} \n
Assume GRH. Let \(C\in Cl_K^{\mathfrak{m}}\), and let \(\sigma_C\in \text{Gal}(H_K^{\mathfrak{m}}/K)\) be the element corresponding to \(C\) under the Artin isomorphism. Then the set of prime ideals \(\mathfrak{p}\in C\) for which the reduction map
\[
E^{\mathfrak{m}}\longrightarrow \kappa_{\mathfrak{p}}^{\ast}
\]
is surjective has positive density if and only if \(C\) is admissible.
\\
\end{cor}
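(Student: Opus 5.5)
The corollary is a direct specialization of Lenstra's Theorem \ref{51}. I take $F=H_K^{\mathfrak m}$, $W=E^{\mathfrak m}$, $k=1$, and for the union of conjugacy classes the singleton $\{\sigma_C\}$. This is a conjugacy class because $\mathrm{Gal}(H_K^{\mathfrak m}/K)$ is abelian. The proof then has three steps: identify Lenstra's set $M$ with the set in the corollary up to finitely many primes, identify Lenstra's Galois-theoretic condition with admissibility, and check the hypotheses ($W$ finitely generated and infinite, $k$ prime to the characteristic).

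For the first step, consider a prime $\mathfrak p\nmid\mathfrak m$ that is unramified in $H_K^{\mathfrak m}$. By Artin reciprocity, $(\mathfrak p,H_K^{\mathfrak m}/K)=\sigma_C$ exactly when $\mathfrak p\in C$. Every $\varepsilon\in E^{\mathfrak m}$ is a unit, so $\mathrm{ord}_{\mathfrak p}(\varepsilon)=0$ holds automatically. With $k=1$, the index condition says that the image of $E^{\mathfrak m}$ in $\kappa_{\mathfrak p}^\ast$ is all of $\kappa_{\mathfrak p}^\ast$. Hence $M$ and the set of prime ideals $\mathfrak p\in C$ with $E^{\mathfrak m}\to\kappa_{\mathfrak p}^\ast$ surjective differ only by the finitely many primes dividing $\mathfrak m$ or ramified in $H_K^{\mathfrak m}$. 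Such a finite set affects neither infiniteness nor density. The group $W=E^{\mathfrak m}$ has finite index in $E_K$, so it is finitely generated by Dirichlet's unit theorem, and it is infinite by the standing assumption. Since $K$ has characteristic $0$, $k=1$ is admissible. Finally, $q(\ell)=\ell$ and $L_\ell=K(\zeta_\ell,\sqrt[\ell]{E^{\mathfrak m}})$, as already recorded in the text.

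For the second step, I compare the conditions. Lenstra requires $\tilde\sigma\in\mathrm{Gal}(F(\zeta_{P_1})/K)$ with $\tilde\sigma|_F\in\{\sigma_C\}$ and $\tilde\sigma|_{L_\ell}\neq\mathrm{id}$ for all $\ell\in P_2$. For $\ell\in P_2$ we have $L_\ell\subset H_K^{\mathfrak m}(\zeta_{P_1})$. Therefore $\tilde\sigma|_{L_\ell}=\mathrm{id}$ is equivalent to $\tilde\sigma\in\mathrm{Gal}(H_K^{\mathfrak m}(\zeta_{P_1})/L_\ell)$ by the Galois correspondence. So Lenstra's condition is word for word the definition of admissibility (Definition \ref{52}).

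The implications then follow. If $C$ is admissible, the converse part of Theorem \ref{51} under GRH shows that $M$ is infinite with positive density, and so the set in the corollary also has positive density. Conversely, if the set in the corollary has positive density, it is infinite, so $M$ is infinite. The first part of Theorem \ref{51}, which is unconditional, then yields $\tilde\sigma$, and $C$ is admissible. One technical point needs care: $P_1$ might be infinite, in which case $F(\zeta_{P_1})$ is an infinite extension. In fact $P_1$ is finite, because $E^{\mathfrak m}$ contains a non-torsion unit $\varepsilon$, and $\varepsilon\in(K^\ast)^\ell$ can hold for only finitely many $\ell$. The height of $\varepsilon$ bounds such $\ell$, or equivalently, $\varepsilon$ is not divisible in the finitely generated group $E_K$ modulo torsion. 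With $P_1$ finite, all the fields involved are finite Galois extensions, and nothing remains to check beyond the bookkeeping above.
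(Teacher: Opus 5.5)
Your proposal is correct and takes the same route as the paper. The paper gives no separate proof, but the paragraph before Definition~\ref{52} sets up exactly your specialization of Lenstra's theorem ($F=H_K^{\mathfrak m}$, $W=E^{\mathfrak m}$, $k=1$, singleton class $\{\sigma_C\}$), under which Lenstra's Galois condition becomes admissibility word for word, and your extra bookkeeping fills in details the paper leaves implicit: $E^{\mathfrak m}$ is finitely generated and infinite, $P_1$ is finite so that $H_K^{\mathfrak m}(\zeta_{P_1})/K$ is a finite extension, and $M$ differs from the set of $\mathfrak p\in C$ with surjective reduction by only finitely many primes.
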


\begin{cor}
Let \(\mathcal{C}=\{C_1,\dots,C_r\}\subset Cl_K^{\mathfrak{m}}\) be a subset of ray classes. If each \(C_i\) is admissible, then
\[
\left<\pi_{\mathfrak{m}}(A_{\mathcal C,\omega})\right>=\left<\mathcal{C}\right>.
\]
\end{cor}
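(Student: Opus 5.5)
The plan is to reduce the statement to Theorem 4.9(ii). That theorem already gives \(\left<\pi_{\mathfrak m}(A_{\mathcal C,\omega})\right>=\left<\mathcal C\right>\) provided each \(A_{C_i,1}\) is non-trivial, that is, \(A_{C_i,1}\neq A_{C_i,0}=\{(1)\}\). So the whole task is to show that admissibility of \(C_i\) forces the existence of at least one element of \(A_{C_i,1}-A_{C_i,0}\).

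\textbf{Step 1.} Fix \(i\). By Corollary 5.3 (which uses GRH, in force throughout this section), admissibility of \(C_i\) implies that the set
\[
\{\mathfrak p\in C_i~|~E^{\mathfrak m}\to\kappa_{\mathfrak p}^\ast \text{ is surjective}\}
\]
has positive density. In particular, it is non-empty; in fact it is infinite.

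\textbf{Step 2.} Take such a prime \(\mathfrak p\). Since \(\mathfrak p\in C_i\subset J_K^{\mathfrak m}\), it is prime to \(\mathfrak m\) and belongs to \(\text{Int}(J_K^{\mathfrak m})\). By Proposition 4.5(i), the set \(A_{C_i,1}-A_{C_i,0}\) is exactly the set of nonzero primes \(\mathfrak p\in C_i\) with \(E^{\mathfrak m}\to\kappa_{\mathfrak p}^\ast\) surjective. Hence \(\mathfrak p\in A_{C_i,1}-A_{C_i,0}\), and \(A_{C_i,1}\) is non-trivial.

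\textbf{Step 3.} Since this holds for every \(i=1,\dots,r\), Theorem 4.9(ii) applies and gives the equality \(\left<\pi_{\mathfrak m}(A_{\mathcal C,\omega})\right>=\left<\mathcal C\right>\). One inclusion is already unconditional by Theorem 4.9(i).

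There is no serious obstacle. The one point to check is that the setting of Corollary 5.3 is in place, namely the standing assumption that \(E^{\mathfrak m}\) is infinite, which Lenstra's theorem needs for \(W=E^{\mathfrak m}\) to be an infinite finitely generated group. Admissibility is formulated within that setting, so I take it as implicit in the hypothesis. I will also remark that the positive-density conclusion is stronger than needed, since a single prime per class suffices.
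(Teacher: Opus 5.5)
Your proposal is correct and matches the paper's own proof. Corollary 5.3 gives a prime \(\mathfrak{p}_i\in C_i\) with \(E^{\mathfrak{m}}\to\kappa_{\mathfrak{p}_i}^{\ast}\) surjective, Proposition 4.5(i) then places \(\mathfrak{p}_i\) in \(A_{C_i,1}-A_{C_i,0}\), and Theorem 4.9(ii) yields the equality.
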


\begin{proof}
If \(C_i\) is admissible, then, by Corollary \ref{53}, there exists a prime ideal \(\mathfrak{p}_i\in C_i\) such that the reduction map \(E^{\mathfrak{m}}\longrightarrow \kappa_{\mathfrak{p}_i}^{\ast}\) is surjective. By Proposition \ref{45}, this implies \(\mathfrak{p}_i\in A_{C_i,1}-A_{C_i,0}\). Hence \(A_{C_i,1}-A_{C_i,0}\neq\emptyset\) for every \(i\). The assertion therefore follows by Theorem \ref{49}(ii).
\\
\end{proof}

The following lemma is the key ingredient in the proof of Theorem 1.2.
\\

\begin{lem}\label{55} \n
Let \(C\in Cl_K^{\mathfrak{m}}\) be admissible, and let \(C'\in Cl_K^{\mathfrak{m}}\). Let \(\mathfrak{p}\in CC'\) be a prime ideal which does not divide \(\mathfrak{m}\), any rational prime in \(P_1\), or \(\#\mu_K\). Then, for every \(\tilde{\mathfrak{c}}\in I(C',\mathfrak{p})\), there exists \(\mathfrak{x}\in S_{\mathfrak{m}}(\tilde{\mathfrak{c}})\) such that
\[
\mathfrak{x}\mathfrak{p}\in A_{C,1}.
\]
\end{lem}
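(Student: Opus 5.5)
The plan is to reduce the statement to producing a single prime ideal in a prescribed ray class modulo \(\mathfrak{p}\mathfrak{m}\), and to get that prime from Lenstra's theorem (Theorem \ref{51}) applied to the ray class field \(H_K^{\mathfrak{p}\mathfrak{m}}\) instead of \(H_K^{\mathfrak{m}}\).

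\textbf{Reduction.} Write \(\tilde{\mathfrak{c}}=\tilde{\mathfrak{a}}/\mathfrak{p}\). Here \(\tilde{\mathfrak{a}}\) is integral, prime to \(\mathfrak{p}\mathfrak{m}\), and lies in the class \([\mathfrak{p}]C'^{-1}=C\) of \(Cl_K^{\mathfrak{m}}\). Then
\[
S_{\mathfrak{m}}(\tilde{\mathfrak{c}})=\mathfrak{p}^{-1}\text{Int}(\tilde{\mathfrak{a}}P_K^{\mathfrak{p}\mathfrak{m}}).
\]
So it suffices to find a prime \(\mathfrak{q}\nmid\mathfrak{p}\mathfrak{m}\) with \(\mathfrak{q}\in \tilde{\mathfrak{a}}P_K^{\mathfrak{p}\mathfrak{m}}\) such that \(E^{\mathfrak{m}}\to\kappa_{\mathfrak{q}}^\ast\) is surjective. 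Indeed, \(\tilde{\mathfrak{a}}P_K^{\mathfrak{p}\mathfrak{m}}\) refines \(C\), so \(\mathfrak{q}\in C\), and Proposition \ref{45}(i) then gives \(\mathfrak{q}\in A_{C,1}\). Taking \(\mathfrak{x}=\mathfrak{p}^{-1}\mathfrak{q}\in S_{\mathfrak{m}}(\tilde{\mathfrak{c}})\) gives \(\mathfrak{x}\mathfrak{p}=\mathfrak{q}\in A_{C,1}\).

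\textbf{Setup for Lenstra.} Apply Theorem \ref{51} with
\[
F'=H_K^{\mathfrak{p}\mathfrak{m}},\quad W=E^{\mathfrak{m}},\quad k=1,
\]
and take the conjugacy class to be the Artin element \(\sigma'\) of the class of \(\tilde{\mathfrak{a}}\) in \(Cl_K^{\mathfrak{p}\mathfrak{m}}\). Then \(\sigma'|_{H_K^{\mathfrak{m}}}=\sigma_C\). Since \(W\) is unchanged, the set \(P_1\) and the fields \(L_\ell=K(\zeta_\ell,\sqrt[\ell]{E^{\mathfrak{m}}})\) are unchanged. The new set is \(P_2'=\{\ell\mid L_\ell\subset F'(\zeta_{P_1})\}\supseteq P_2\). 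By the GRH direction of Theorem \ref{51}, it suffices to find \(\tilde\sigma\in \text{Gal}(F'(\zeta_{P_1})/K)\) with \(\tilde\sigma|_{F'}=\sigma'\) and \(\tilde\sigma|_{L_\ell}\neq\id\) for all \(\ell\in P_2'\). Positive density then yields infinitely many such \(\mathfrak{q}\), and we can discard the finitely many dividing \(\mathfrak{p}\mathfrak{m}\).

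\textbf{Constructing \(\tilde\sigma\).} Start from the admissibility element \(\tilde\sigma_C\in\text{Gal}(H_K^{\mathfrak{m}}(\zeta_{P_1})/K)\). We want to glue it with \(\sigma'\). This needs
\[
F'\cap H_K^{\mathfrak{m}}(\zeta_{P_1})=H_K^{\mathfrak{m}}.
\]
To see this, note that \(H_K^{\mathfrak{m}}(\zeta_{P_1})/K\) is unramified at \(\mathfrak{p}\), since \(\mathfrak{p}\nmid\mathfrak{m}\) and \(\mathfrak{p}\nmid\ell\) for \(\ell\in P_1\). On the other hand, every subextension of \(F'/K\) unramified at \(\mathfrak{p}\) (and with the same infinite ramification) lies in \(H_K^{\mathfrak{m}}\) by class field theory, since its conductor divides \(\mathfrak{m}\). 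Given the gluing, \(\tilde\sigma\) avoids every \(L_\ell\) with \(\ell\in P_2\), because \(L_\ell\subset H_K^{\mathfrak{m}}(\zeta_{P_1})\) and \(\tilde\sigma\) restricts to \(\tilde\sigma_C\) there.

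\textbf{Main obstacle: new primes in \(P_2'\).} We must show \(P_2'=P_2\), i.e. rule out \(\ell\in P_2'\setminus P_2\).

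First, ramification. \(L_\ell\) is unramified outside \(\ell\) and the infinite places, because \(W\) consists of units. If \(L_\ell\subset F'(\zeta_{P_1})\) but \(L_\ell\not\subset H_K^{\mathfrak{m}}(\zeta_{P_1})\), then the compositum \(L_\ell H_K^{\mathfrak{m}}(\zeta_{P_1})\) must be ramified at \(\mathfrak{p}\). This forces \(\mathfrak{p}\mid\ell\), so \(\ell\) is the residue characteristic of \(\mathfrak{p}\).

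Next, ramification at \(\mathfrak{p}\) in \(F'(\zeta_{P_1})/H_K^{\mathfrak{m}}(\zeta_{P_1})\) is tame, with index dividing \(N\mathfrak{p}-1\), which is prime to \(\ell\). So the same holds for the relevant part of \(L_\ell\) over \(L_\ell\cap H_K^{\mathfrak{m}}(\zeta_{P_1})\). Now \(\ell\notin P_1\), because \(\mathfrak{p}\) divides no prime of \(P_1\). Hence \(E^{\mathfrak{m}}\not\subset (K^\ast)^\ell\), and the Kummer part \(K(\zeta_\ell,\sqrt[\ell]{E^{\mathfrak{m}}})/K(\zeta_\ell)\) has degree a positive power of \(\ell\).

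The hypothesis \(\mathfrak{p}\nmid\#\mu_K\) gives \(\ell\nmid\#\mu_K\), so \(\zeta_\ell\notin K\). I would use this to control \(K(\zeta_\ell)\) and then argue that an \(\ell\)-power-degree extension ramified only above \(\ell\) cannot sit inside a tame extension at \(\mathfrak{p}\) of degree prime to \(\ell\). This would give the contradiction.

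I expect this last degree/ramification bookkeeping to be the delicate step. In particular, one must handle the possibility that \(L_\ell\) is ramified at other primes above \(\ell\) besides \(\mathfrak{p}\). I would try first to argue via the inertia group at \(\mathfrak{p}\) of \(\text{Gal}(F'(\zeta_{P_1})/K)\), which is cyclic of order prime to \(\ell\) modulo the part coming from \(H_K^{\mathfrak{m}}(\zeta_{P_1})\).
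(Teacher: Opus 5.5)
Your reduction, the application of Lenstra's theorem to $H_K^{\mathfrak{p}\mathfrak{m}}$, the gluing via $H_K^{\mathfrak{p}\mathfrak{m}}\cap H_K^{\mathfrak{m}}(\zeta_{P_1})=H_K^{\mathfrak{m}}$, and the deduction that a new $\ell\in P_2'\setminus P_2$ must satisfy $\mathfrak{p}\mid\ell$, $\ell\notin P_1$ and $\zeta_\ell\notin K$ all agree with the paper. The gap is the final contradiction, because degree and ramification bookkeeping cannot produce it.

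Put $M=H_K^{\mathfrak{m}}(\zeta_{P_1})$ and $F''=H_K^{\mathfrak{p}\mathfrak{m}}(\zeta_{P_1})$. The degree $[L_\ell M:M(\zeta_\ell)]$ divides $[L_\ell:K(\zeta_\ell)]$, so it is a power of $\ell$. It also divides $[F'':M]$, which divides $N\mathfrak{p}-1$, so it is prime to $\ell$. Hence all your argument can give is $L_\ell\subset M(\zeta_\ell)$. In other words, the Kummer layer is pushed into $M(\zeta_\ell)$, and the only part of $L_\ell$ lying outside $M$ is the tame, prime-to-$\ell$ piece $K(\zeta_\ell)$. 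That is not a contradiction.

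Ramification does not rule out $L_\ell\subset M(\zeta_\ell)$ either. $L_\ell$ ramifies only above $\ell$ and at infinity, and $M(\zeta_\ell)$ may ramify at all those places. At $\mathfrak{p}$ itself, the Kummer layer may even be unramified, since nothing constrains units of $E^{\mathfrak{m}}$ locally at $\mathfrak{p}\nmid\mathfrak{m}$. The difficulty you anticipated about other primes above $\ell$ therefore does not go away.

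The missing idea is Galois-theoretic rather than ramification-theoretic. $F''/K$ is abelian, so $L_\ell\subset F''$ would make $L_\ell/K$ abelian, and that is impossible here.

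First, since $\ell\nmid[K(\zeta_\ell):K]$, the norm argument gives $(K(\zeta_\ell)^\ast)^\ell\cap K^\ast=(K^\ast)^\ell$. So there exists $\varepsilon\in E^{\mathfrak{m}}\setminus(K(\zeta_\ell)^\ast)^\ell$. You asserted this nontriviality of the Kummer part without justification, and this is the justification.

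Now let $\alpha=\sqrt[\ell]{\varepsilon}$. Take $\sigma\in\text{Gal}(L_\ell/K(\zeta_\ell))$ with $\sigma(\alpha)\neq\alpha$. Take a lift $\tilde{\delta}$ of some $1\neq\delta\in\text{Gal}(K(\zeta_\ell)/K)$, which exists because $\zeta_\ell\notin K$. A direct computation gives $\tilde{\delta}\sigma\tilde{\delta}^{-1}(\alpha)/\alpha=\delta\bigl(\sigma(\alpha)/\alpha\bigr)$. This differs from $\sigma(\alpha)/\alpha$, because $\delta$ acts nontrivially on $\mu_\ell$. So $\sigma$ and $\tilde{\delta}$ do not commute, and $L_\ell/K$ is not abelian.

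This is how the paper closes the argument. With it, the tame-index computation becomes unnecessary: ramification is only needed to force $\mathfrak{p}\mid\ell$, and hence $\zeta_\ell\notin K$ via $\mathfrak{p}\nmid\#\mu_K$.
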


\begin{proof}
Let \(\ds \tilde{\mathfrak{c}}=\frac{\tilde{\mathfrak{a}}}{\mathfrak{p}}\in I(C',\mathfrak{p})\). Then \(\tilde{\mathfrak{a}}\in \text{Int}(\mathfrak{p} {C'}^{-1})=\text{Int}(C)\). Let \(\sigma_C\in \text{Gal}(H_K^{\mathfrak{m}}/K)\) be the element corresponding to \(C\). Put \(D=\tilde{\mathfrak{a}}P_K^{\mathfrak{p}\mathfrak{m}}\in Cl_K^{\mathfrak{p}\mathfrak{m}}\), and let \(\tau_D\in \text{Gal}(H_K^{\mathfrak{p}\mathfrak{m}}/K)\) be the element corresponding to \(D\).

Since \(C\) is admissible, there exists an extension \(\tilde{\sigma}_C\in \text{Gal}(H_K^{\mathfrak{m}}(\zeta_{P_1})/K)\) of \(\sigma_C\) such that
\[
\tilde{\sigma}_C\notin \bigcup_{\ell\in P_2}\text{Gal}(H_K^{\mathfrak{m}}(\zeta_{P_1})/K(\zeta_\ell,\sqrt[\ell]{E^{\mathfrak{m}}})).
\]
Here \(H_K^{\mathfrak{m}}\subset H_K^{\mathfrak{m}}(\zeta_{P_1})\cap H_K^{\mathfrak{p}\mathfrak{m}}\). Since \(\mathfrak{p}\) is prime to \(\mathfrak{m}\) and to all rational primes in \(P_1\), the prime \(\mathfrak{p}\) is unramified in \(H_K^{\mathfrak{m}}(\zeta_{P_1})/K\). On the other hand, the extension \(H_K^{\mathfrak{p}\mathfrak{m}}/H_K^{\mathfrak{m}}\) is ramified only at primes above \(\mathfrak{p}\). Hence \(H_K^{\mathfrak{m}}(\zeta_{P_1})\cap H_K^{\mathfrak{p}\mathfrak{m}}\subset H_K^{\mathfrak{m}}\). Therefore
\[
H_K^{\mathfrak{m}}(\zeta_{P_1})\cap H_K^{\mathfrak{p}\mathfrak{m}}=H_K^{\mathfrak{m}}.
\]
By the compositum theorem, there exists \(\tilde{\tau}_D\in \text{Gal}(H_K^{\mathfrak{p}\mathfrak{m}}(\zeta_{P_1})/K)\) such that
\[
\tilde{\tau}_D|_{H_K^{\mathfrak{p}\mathfrak{m}}}=\tau_D,~~~\tilde{\tau}_D|_{H_K^{\mathfrak{m}}(\zeta_{P_1})}=\tilde{\sigma}_C.
\]
It follows that
\[
\tilde{\tau}_D\notin \bigcup_{\ell\in P_2}\text{Gal}(H_K^{\mathfrak{p}\mathfrak{m}}(\zeta_{P_1})/K(\zeta_\ell,\sqrt[\ell]{E^{\mathfrak{m}}})).
\]

We next show that no new obstruction occurs. Suppose, to the contrary, that there exists \(\ell\notin P_2\) such that \(K(\zeta_\ell,\sqrt[\ell]{E^{\mathfrak{m}}})\subset H_K^{\mathfrak{p}\mathfrak{m}}(\zeta_{P_1})\). Since \(H_K^{\mathfrak{p}\mathfrak{m}}(\zeta_{P_1})/K\) is abelian, the extension \(K(\zeta_\ell,\sqrt[\ell]{E^{\mathfrak{m}}})/K\) is abelian. Moreover,
\[
H_K^{\mathfrak{m}}(\zeta_{P_1})\subsetneq H_K^{\mathfrak{m}}(\zeta_{P_1})K(\zeta_\ell,\sqrt[\ell]{E^{\mathfrak{m}}})\subset H_K^{\mathfrak{p}\mathfrak{m}}(\zeta_{P_1}).
\]
Since \(\mathfrak{p}\) is prime to \(\mathfrak{m}\) and to all rational primes in \(P_1\), the extension \(H_K^{\mathfrak{m}}(\zeta_{P_1})
K(\zeta_\ell,\sqrt[\ell]{E^{\mathfrak{m}}})/H_K^{\mathfrak{m}}(\zeta_{P_1})\) is ramified at \(\mathfrak{p}\). Hence \(K(\zeta_\ell,\sqrt[\ell]{E^{\mathfrak{m}}})/K\) is ramified at \(\mathfrak{p}\). Since this extension is unramified outside primes above \(\ell\), we must have \(\mathfrak{p}|\ell\). Since \(\mathfrak{p}\) is prime to \(\#\mu_K\), we have \(\ell\nmid \#\mu_K\), and therefore \(\zeta_\ell\notin K\).

Also, since \(\ell\notin P_1\), we have \(E^{\mathfrak{m}}\not\subset (K^\ast)^\ell\). Consider the natural \(\mathbb{F}_\ell\)-linear map
\[
K^\ast/(K^\ast)^\ell\longrightarrow K(\zeta_\ell)^\ast/(K(\zeta_\ell)^\ast)^\ell.
\]
The composite with the norm map
\[
N_{K(\zeta_\ell)/K}:K(\zeta_\ell)^\ast/(K(\zeta_\ell)^\ast)^\ell\longrightarrow K^\ast/(K^\ast)^\ell
\]
is multiplication by \([K(\zeta_\ell):K]\). Since \(\ell\nmid [K(\zeta_\ell):K]\), this composite is an automorphism. Hence
\[
(K^\ast)^\ell=(K(\zeta_\ell)^\ast)^\ell\cap K^\ast.
\]
It follows that \(E^{\mathfrak{m}}\not\subset (K(\zeta_\ell)^\ast)^\ell\). Thus \(K(\zeta_\ell,\sqrt[\ell]{E^{\mathfrak{m}}})/K(\zeta_\ell)\) is a non-trivial \(\ell\)-Kummer extension.

Choose \(\varepsilon\in E^{\mathfrak{m}}-(K(\zeta_\ell)^\ast)^\ell\), a non-trivial element \(1\neq \delta\in \text{Gal}(K(\zeta_\ell)/K)\), and \(1\neq \sigma\in\text{Gal}(K(\zeta_\ell,\sqrt[\ell]{E^{\mathfrak{m}}})/K(\zeta_\ell))\) such that
\[
\sigma\notin\text{Gal}(K(\zeta_\ell,\sqrt[\ell]{E^{\mathfrak{m}}})/K(\zeta_\ell,\sqrt[\ell]{\varepsilon})).
\]
Let \(\tilde{\delta}\in \text{Gal}(K(\zeta_\ell,\sqrt[\ell]{E^{\mathfrak{m}}})/K)\) be an extension of \(\delta\). Since \(K(\zeta_\ell,\sqrt[\ell]{E^{\mathfrak{m}}})/K\) is abelian, we have \(\tilde{\delta}\sigma\tilde{\delta}^{-1}=\sigma\). Now \(\ds \frac{\sigma\sqrt[\ell]{\varepsilon}}{\sqrt[\ell]{\varepsilon}}\) is a primitive \(\ell\)-th root of unity. Moreover,
\begin{align*}
\frac{\sigma\sqrt[\ell]{\varepsilon}}{\sqrt[\ell]{\varepsilon}}=\frac{(\tilde{\delta}\sigma\tilde{\delta}^{-1})\sqrt[\ell]{\varepsilon}}
{\sqrt[\ell]{\varepsilon}}=\tilde{\delta}\left(\frac{\sigma(\tilde{\delta}^{-1}\sqrt[\ell]{\varepsilon})}{\tilde{\delta}^{-1}\sqrt[\ell]{\varepsilon}}
\right)=\delta\left(\frac{\sigma\sqrt[\ell]{\varepsilon}}{\sqrt[\ell]{\varepsilon}}\right).
\end{align*}
This contradicts the fact that \(\delta\neq 1\) acts non-trivially on \(\mu_\ell\). Hence no such \(\ell\notin P_2\) exists.

Therefore
\[
\tilde{\tau}_D\notin\bigcup_{K(\zeta_\ell,\sqrt[\ell]{E^{\mathfrak{m}}})\subset H_K^{\mathfrak{p}\mathfrak{m}}(\zeta_{P_1})}\text{Gal}(H_K^{\mathfrak{p}\mathfrak{m}}(\zeta_{P_1})/K(\zeta_\ell,\sqrt[\ell]{E^{\mathfrak{m}}})).
\]
Thus the class \(D\in Cl_K^{\mathfrak{p}\mathfrak{m}}\) satisfies Lenstra's obstruction condition for the group \(E^{\mathfrak{m}}\). By Lenstra's theorem, there exist prime ideals \(\mathfrak{q}\in D\) of positive density such that the reduction map \(E^{\mathfrak m}\longrightarrow \kappa_{\mathfrak{q}}^{\ast}\) is surjective.

Choose such a prime ideal \(\mathfrak{q}\in D\). Since \(D=\tilde{\mathfrak{a}}P_K^{\mathfrak{p}\mathfrak{m}}\), we have
\[
\frac{\mathfrak{q}}{\mathfrak{p}}\in \mathfrak{p}^{-1}\text{Int}(\tilde{\mathfrak{a}}P_K^{\mathfrak{p}\mathfrak{m}})=S_{\mathfrak{m}}(\tilde{\mathfrak{c}}).
\]
Put \(\ds \mathfrak{x}=\frac{\mathfrak{q}}{\mathfrak{p}}\), then \(\mathfrak{x}\mathfrak{p}=\mathfrak{q}\). Moreover, since \(\mathfrak{q}\in D\) and \(D\) maps to \(C\) modulo \(\mathfrak{m}\), we have \(\mathfrak{q}\in C\). Together with the surjectivity of \(E^{\mathfrak{m}}\longrightarrow \kappa_{\mathfrak{q}}^{\ast}\), the description of the first Motzkin layer gives \(\mathfrak{q}\in A_{C,1}\).
Therefore \(\mathfrak{x}\mathfrak{p}\in A_{C,1}\). This proves the lemma.
\\
\end{proof}

\begin{cor}\label{56} \n
Suppose that \(C_1\) is admissible. \\
(i) We have
\[
\left\{\mathfrak{p}\in \text{Spec}(\ded_K)\cap\bigcup_{i=1}^r C_1C_i~\middle|~\mathfrak{p} \text{ is prime to every rational prime in }P_1\text{ and to }\#\mu_K\right\}\subset A_{\mathcal{C},2}.
\]
(ii) For every \(k\ge 3\), we have
\[
\text{Spec}(\ded_K)\cap\bigcup_{i_1,\dots,i_{k-1}=1}^rC_1C_{i_1}\cdots C_{i_{k-1}}\subset A_{\mathcal{C},k}.
\]
(iii) Put
\[
s=\min\left\{\tilde{s}~\middle|~\left<\mathcal{C}\right>=\left\{\prod_{i=1}^r C_i^{n_i}~\middle|~n_1\in\mathbb{N}_{>0},~n_2,\dots,n_r\in\mathbb{N},~0<\sum_{i=1}^r n_i\le \tilde{s}\right\}\right\}.
\]
Then
\[
\text{Spec}(\ded_K)\cap\pi_{\mathfrak{m}}^{-1}(\left<\mathcal{C}\right>)\subset A_{\mathcal{C},s+2}.
\]
\end{cor}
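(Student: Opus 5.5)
We prove the three parts in order. Parts (ii) and (iii) are induction and bookkeeping on top of (i), and Lemma~\ref{55} is the only arithmetic input. Two facts will be used throughout. First, $P_1$ is finite, since $E^{\mathfrak m}$ is finitely generated with positive rank. So only finitely many primes of $\ded_K$ divide a rational prime in $P_1$ or divide $\#\mu_K$; call a prime ideal \emph{good} if it avoids these and does not divide $\mathfrak m$. Second, by Chebotarev every ray class of every modulus contains infinitely many primes, and hence infinitely many good primes.

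For (i), let $\mathfrak p\in C_1C_i$ be good and take any $(\tilde{\mathfrak c}_1,\dots,\tilde{\mathfrak c}_r)\in I(\mathcal C,\mathfrak p)$. We look only at the $i$-th component,
\[
\tilde{\mathfrak c}_i=\tilde{\mathfrak d}\prod_{j\neq i}\mathfrak c_j .
\]
Its class is $(C_1\cdots C_r)^{-1}\prod_{j\ne i}C_j=C_i^{-1}$, and $\tilde{\mathfrak c}_i\mathfrak p$ is integral. It is not integral, because the denominator of $\tilde{\mathfrak d}$ is a nontrivial divisor of $\mathfrak p$ and the $\mathfrak c_j$ are prime to $\mathfrak p$. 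Hence $\tilde{\mathfrak c}_i\in I(C_i,\mathfrak p)$. Lemma~\ref{55}, applied with $C=C_1$ and $C'=C_i$, gives $\mathfrak x\in S_{\mathfrak m}(\tilde{\mathfrak c}_i)$ with $\mathfrak x\mathfrak p\in A_{C_1,1}$. By Proposition~\ref{45}(ii), $A_{C_1,1}\subset A_{\mathcal C,1}$. Since the tuple was arbitrary, $\mathfrak p\in A_{\mathcal C,2}$.

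For (ii) we induct on $k\ge 3$. Let $\mathfrak p\in C_1C_{i_1}\cdots C_{i_{k-1}}$ be any prime, good or not, and again use only the component $i=i_{k-1}$. Write $\tilde{\mathfrak c}_i=\tilde{\mathfrak a}/\mathfrak p$, exactly as in the proof of Lemma~\ref{55}. Then $\tilde{\mathfrak a}$ is integral and lies in $\mathfrak p C_i^{-1}=C_1C_{i_1}\cdots C_{i_{k-2}}$. By Chebotarev, choose a good prime $\mathfrak q$ in the class $D=\tilde{\mathfrak a}P_K^{\mathfrak p\mathfrak m}\in Cl_K^{\mathfrak p\mathfrak m}$. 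Then $\mathfrak x:=\mathfrak q/\mathfrak p\in S_{\mathfrak m}(\tilde{\mathfrak c}_i)$, and $\mathfrak q\in C_1C_{i_1}\cdots C_{i_{k-2}}$. If $k=3$, then $\mathfrak q$ is a good prime in some $C_1C_{i_1}$, so $\mathfrak q\in A_{\mathcal C,2}$ by (i). If $k>3$, then $\mathfrak q\in A_{\mathcal C,k-1}$ by the induction hypothesis. In either case $\mathfrak p\in A_{\mathcal C,k}$. The freedom to choose $\mathfrak q$ good is why (ii) needs no coprimality hypothesis on $\mathfrak p$.

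For (iii), let $\mathfrak p$ be a prime with $[\mathfrak p]\in\langle\mathcal C\rangle$. By definition of $s$ we can write $[\mathfrak p]=C_1C_{i_1}\cdots C_{i_{k-1}}$ with $1\le k\le s$. If $k\ge 3$, then (ii) gives $\mathfrak p\in A_{\mathcal C,k}\subset A_{\mathcal C,s+2}$, since the hierarchy is increasing. The remaining cases $k\le 2$ are the main obstacle: $\mathfrak p$ may be bad, and (i) does not apply.

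For these cases we take one more descent step, as in (ii). Choose an index $j$ and, via Chebotarev, a good prime $\mathfrak q$ with $\mathfrak q/\mathfrak p\in S_{\mathfrak m}(\tilde{\mathfrak c}_j)$. Then $\mathfrak q$ lies in $[\mathfrak p]C_j^{-1}\in\langle\mathcal C\rangle$, which has an expression of length $k'\le s$ beginning with $C_1$. If $k'\ge 2$, then (i) or (ii) puts $\mathfrak q$ in $A_{\mathcal C,\max(k',2)}\subset A_{\mathcal C,s}$, hence $\mathfrak p\in A_{\mathcal C,s+1}$. The bad subcase is when the new class is exactly $C_1$. We avoid it by choosing $j$ with $[\mathfrak p]C_j^{-1}\neq C_1$, which is possible unless $[\mathfrak p]=C_1C_j$ for every $j$. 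That forces all $C_j$ to coincide, i.e.\ $r=1$.

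The case $r=1$ is the one I expect to need care. There the plan is to use the expression of $C_1$ as a longer power $C_1^{1+\operatorname{ord}C_1}$, or to spend one more step, which is what the allowance $s+2$ rather than $s+1$ provides room for. For example, first descend from a good prime in $C_1$ to a good prime in the trivial class, and then treat that class via its representation of length at most $s$.
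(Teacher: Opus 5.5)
Your parts (i) and (ii) follow the paper's argument. In (i) you also supply a detail the paper leaves implicit: the \(i\)-th component of any tuple in \(I(\mathcal{C},\mathfrak{p})\) lies in \(I(C_i,\mathfrak{p})\), so Lemma~\ref{55} applies with \(C=C_1\), \(C'=C_i\).

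Part (iii), however, is not completed. You treat words of length \(k\le 2\) as an obstacle, because the prime may be bad.

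\begin{itemize}
\item For \(r\ge 2\) you remove it with an extra descent to a good prime. That argument is correct, and it even lands in \(A_{\mathcal{C},s+1}\).
\item For \(r=1\) you only describe a plan, and as stated it does not close. Suppose \(C_1\) is trivial. Then \(C_1^{1+\operatorname{ord}C_1}=C_1^2\) still has length \(2\), and (i) says nothing about a bad prime there. Likewise, the ``representation of length at most \(s\)'' of the trivial class is \(C_1\) itself, which is the bad subcase again.
\end{itemize}

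The missing observation is that padding by \(C_1\) works uniformly and makes the whole case split unnecessary. Apply the definition of \(s\) to \(BC_1^{-2}\in\langle\mathcal{C}\rangle\) rather than to \(B=[\mathfrak{p}]\). This gives \(BC_1^{-2}=\prod_i C_i^{n_i}\) with \(n_1>0\) and \(1\le\sum_i n_i\le s\). Hence \(B\) has a word \(C_1C_{i_1}\cdots C_{i_{k-1}}\) with \(k=2+\sum_i n_i\in\{3,\dots,s+2\}\).

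Since (ii) places no goodness hypothesis on \(\mathfrak{p}\), it gives \(\mathfrak{p}\in A_{\mathcal{C},k}\subset A_{\mathcal{C},s+2}\). This holds for every prime whose class lies in \(\langle\mathcal{C}\rangle\), good or bad, and for every \(r\). This is what the paper means by ``(iii) follows from (ii) and the definition of \(s\)'', and it is exactly why the bound is \(s+2\) rather than \(s\): the two padding factors guarantee \(k\ge 3\).
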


\begin{proof}
The assertion (i) follows immediately from Lemma \ref{55}.

We prove (ii). First let \(k=3\). Let \(\mathfrak{p}\in C_1C_{i_1}C_{i_2}\) be a prime ideal, and let
\[
(\tilde{\mathfrak{c}}_1,\dots,\tilde{\mathfrak{c}}_r)=\left(\frac{\tilde{\mathfrak{a}}_1}{\mathfrak{p}},\dots,\frac{\tilde{\mathfrak{a}}_r}{\mathfrak{p}}\right)\in I(\mathcal{C},\mathfrak{p}).
\]
Then \(\tilde{\mathfrak{a}}_{i_2}\in [\mathfrak{p}]C_{i_2}^{-1}=C_1C_{i_1}\). By the density theorem, we may choose a prime ideal \(\mathfrak{q}_{i_2}\in\tilde{\mathfrak{a}}_{i_2}P_K^{\mathfrak{p}\mathfrak{m}}\) which is prime to \(\mathfrak{m}\), to every rational prime in \(P_1\), and to \(\#\mu_K\). Then \(\ds \mathfrak{x}_{i_2}=\frac{\mathfrak{q}_{i_2}}{\mathfrak{p}}\in S_{\mathfrak{m}}(\tilde{\mathfrak{c}}_{i_2})\), and \(\mathfrak{x}_{i_2}\mathfrak{p}=\mathfrak{q}_{i_2}\in A_{\mathcal{C},2}\) by (i). Hence 
\[
\mathfrak{p}\in A_{\mathcal{C},3}.
\]

Now assume that the assertion holds for \(k-1\). Let \(\mathfrak{p}\in C_1C_{i_1}\cdots C_{i_{k-1}}\) be a prime ideal, and let \(\ds (\tilde{\mathfrak{c}}_1,\dots,\tilde{\mathfrak{c}}_r)=\left(\frac{\tilde{\mathfrak{a}}_1}{\mathfrak{p}},\dots,\frac{\tilde{\mathfrak{a}}_r}{\mathfrak{p}}\right)\in I(\mathcal{C},\mathfrak{p})\). Then \(\tilde{\mathfrak{a}}_{i_{k-1}}\in[\mathfrak{p}]C_{i_{k-1}}^{-1}=C_1C_{i_1}\cdots C_{i_{k-2}}\). By the density theorem, we may choose a prime ideal \(\mathfrak{q}_{i_{k-1}}\in\tilde{\mathfrak{a}}_{i_{k-1}}P_K^{\mathfrak{p}\mathfrak{m}}\). Then
\[
\mathfrak{x}_{i_{k-1}}=\frac{\mathfrak{q}_{i_{k-1}}}{\mathfrak{p}}\in S_{\mathfrak{m}}(\tilde{\mathfrak{c}}_{i_{k-1}}),
\]
and
\[
\mathfrak{x}_{i_{k-1}}\mathfrak{p}=\mathfrak{q}_{i_{k-1}}\in\text{Spec}(\ded_K)\cap C_1C_{i_1}\cdots C_{i_{k-2}}\subset A_{\mathcal{C},k-1}
\]
by the induction hypothesis. Hence
\[
\mathfrak{p}\in A_{\mathcal{C},k}.
\]
Thus (ii) follows by induction.

Finally, (iii) follows from (ii) and the definition of \(s\).
\\
\end{proof}

\begin{thm}\label{57} \n
Let \(\mathcal{C}=\{C_1,\dots,C_r\}\subset Cl_K^{\mathfrak{m}}\) be a subset of ray classes, and suppose that \(C_1\) is admissible. Let
\[
s=\min\left\{\tilde{s}~\middle|~\left<\mathcal C\right>=\left\{\prod_{i=1}^r C_i^{n_i}~\middle|~n_1\in\mathbb{N}_{>0},~n_2,\dots,n_r\in\mathbb{N},~0<\sum_{i=1}^r n_i\le \tilde{s}\right\}\right\}.
\]
Then
\[
\text{Spec}(\ded_K)\cap \pi_{\mathfrak{m}}^{-1}(\left<\mathcal{C}\right>)\subset A_{\mathcal{C},s+2}.
\]

Define
\[
\lambda:\{(1)\}\cup\left(\text{Spec}(\ded_K)\cap \pi_{\mathfrak{m}}^{-1}(\left<\mathcal{C}\right>)\right)\longrightarrow \{0,1,\dots,s+2\}
\]
by
\[
\lambda(\mathfrak{p})=\begin{cases}
0 & (\mathfrak{p}=(1)), \\ 1 & ((1)\neq \mathfrak{p}\in A_{C_1,1}), \\ 2 & \left(\mathfrak{p}\in \ds\bigcup_{i=1}^r C_1C_i\text{ and }\mathfrak{p}\text{ is prime to every rational prime in }P_1\text{ and to }\#\mu_K\right), \\ k & \left(
\begin{array}{l}
\text{otherwise, where }k\text{ is the minimum integer in }\{3,\dots,s+2\} \\
\text{such that there exist }n_1\in\mathbb{N}_{>0},~n_2,\dots,n_r\in\mathbb{N} \\
\text{with }\mathfrak{p}\in \ds\prod_{i=1}^r C_i^{n_i}\text{ and }\sum_{i=1}^r n_i=k
\end{array}
\right).
\end{cases}
\]
Define
\[
\theta:\left<\text{Spec}(\ded_K)\cap\pi_{\mathfrak{m}}^{-1}(\left<\mathcal{C}\right>)\right>\longrightarrow\mathbb{N}
\]
by
\[
\theta\left(\prod_{\mathfrak{p}}\mathfrak p^{\nu_{\mathfrak{p}}}\right)=\sum_{\mathfrak{p}}(\lambda(\mathfrak{p})+s+1)\nu_{\mathfrak{p}}.
\]
Then, for every non-trivial integral ideal
\[
(1)\neq\mathfrak{b}\in\left<\text{Spec}(\ded_K)\cap\pi_{\mathfrak{m}}^{-1}(\left<\mathcal{C}\right>)\right>,
\]
and for every
\[
(\tilde{\mathfrak{c}}_1,\dots,\tilde{\mathfrak{c}}_r)\in I(\mathcal{C},\mathfrak{b}),
\]
there exist \(i=1,\dots,r\) and \(\mathfrak{x}_i\in S_{\mathfrak{m}}(\tilde{\mathfrak{c}}_i)\) such that
\[
\theta(\mathfrak{x}_i\mathfrak{b})<\theta(\mathfrak{b}).
\]
Moreover,
\[
\left<\text{Spec}(\ded_K)\cap\pi_{\mathfrak{m}}^{-1}(\left<\mathcal{C}\right>)\right>\subset A_{\mathcal{C},\omega}\subset\pi_{\mathfrak{m}}^{-1}(\left<\mathcal{C}\right>).
\]
In particular, if \(\mathcal{C}\) generates \(Cl_K^{\mathfrak{m}}\), then
\[
A_{\mathcal{C},\omega}=\text{Int}(J_K^{\mathfrak{m}}),
\]
and \(\mathcal{C}\) is a Euclidean system with completely additive Euclidean function \(\theta\).
\end{thm}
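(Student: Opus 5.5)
The first assertion is Corollary \ref{56}(iii), so the work lies in the Euclidean inequality for \(\theta\) and in the two inclusions. We reduce everything to the case \(\mathfrak{b}=\mathfrak{p}\) prime, using the complete additivity of \(\theta\). We also use that \(\lambda\) is well defined, with \(\lambda(\mathfrak{p})\ge 1\) for \(\mathfrak{p}\neq(1)\). Well-definedness follows from Corollary \ref{56}: every prime in \(\pi_{\mathfrak m}^{-1}(\langle\mathcal C\rangle)\) lies in some \(C_1^{n_1}\cdots C_r^{n_r}\) with \(n_1>0\) and \(\sum n_i\le s\), and this bound is at most \(s+2\).

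The key local statement is the following. For a prime \(\mathfrak{p}\) with \(\lambda(\mathfrak{p})=k\ge1\) and any \(\tilde{\mathfrak c}\)-tuple in \(I(\mathcal C,\mathfrak{p})\), there are \(i\) and \(\mathfrak{x}_i\) such that \(\mathfrak{x}_i\mathfrak{p}\) is either \((1)\) or a prime \(\mathfrak q\) with \(\lambda(\mathfrak q)=k-1\), or at least \(\lambda(\mathfrak q)<k\). The cases are as follows.
\begin{itemize}
\item For \(k=1\), use Proposition \ref{45} to get \(\mathfrak{x}_1=\mathfrak{p}^{-1}\).
\item For \(k=2\), apply Lemma \ref{55} with \(C=C_1\) and \(C'=C_i\), yielding \(\mathfrak q\in A_{C_1,1}\).
\item For \(k\ge3\), argue as in Corollary \ref{56}(ii). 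Remove one factor \(C_{i_{k-1}}\) with \(n_{i_{k-1}}>0\), keeping \(n_1>0\). Then apply Chebotarev in the ray class \(\tilde{\mathfrak a}_{i}P_K^{\mathfrak p\mathfrak m}\) to choose a prime \(\mathfrak q\) avoiding \(P_1\), \(\#\mu_K\) and \(\mathfrak m\). This \(\mathfrak q\) has \(\lambda(\mathfrak q)\le k-1\): when \(k-1=2\), the prime condition was built into the choice of \(\mathfrak q\).
\end{itemize}
A subtle point is \(k=3\) with \(n_1=2\) and all other \(n_i=0\). In that case the removed factor is \(C_1\) itself, leaving \(C_1C_1\), which is the \(i=1\) case of \(C_1C_i\), so this case is fine.

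For general \(\mathfrak b=\mathfrak p\mathfrak b'\), given a tuple in \(I(\mathcal C,\mathfrak b)\), we must produce \(\mathfrak x_i\in S_{\mathfrak m}(\tilde{\mathfrak c}_i)\) with \(\theta(\mathfrak x_i\mathfrak b)<\theta(\mathfrak b)\). The generic element of \(S_{\mathfrak m}(\tilde{\mathfrak c}_i)\) has the form \(\mathfrak d^{-1}\mathfrak q\), where \(\mathfrak d\) is the denominator of \(\tilde{\mathfrak c}_i\) (dividing \(\mathfrak b\)) and \(\mathfrak q\) is an integral ideal in a prescribed class of \(Cl_K^{\mathfrak d\mathfrak m}\). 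By Chebotarev, we may take \(\mathfrak q\) to be a single prime chosen with suitable auxiliary conditions. Then \(\theta(\mathfrak x_i\mathfrak b)=\theta(\mathfrak b)-\theta(\mathfrak d)+\lambda(\mathfrak q)+s+1\). Since \(\theta(\mathfrak d)\ge(s+2)\) whenever \(\mathfrak d\neq(1)\), we need \(\lambda(\mathfrak q)\le s+2\) with strict total inequality. This holds if \(\mathfrak d\) has at least two prime factors, because \(\theta(\mathfrak d)\ge 2(s+2)>s+1+s+2\). It also holds if \(\mathfrak d=\mathfrak p\) is a single prime, using the local step: \(\lambda(\mathfrak q)<\lambda(\mathfrak p)\) gives a strict decrease. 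This explains the shift \(s+1\) in the definition of \(\theta\).

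The main obstacle is the single-prime-denominator case. Here \(\mathfrak q\) must lie in \(\pi_{\mathfrak m}^{-1}(\langle\mathcal C\rangle)\), which follows because classes of \(\mathfrak b\) and \(C_i\) lie in \(\langle\mathcal C\rangle\). It must also have small \(\lambda\), and the local step has to apply to a tuple whose numerators carry extra factors from \(\mathfrak b'\). I would handle this by noting that only the class of the numerator in \(Cl_K^{\mathfrak p\mathfrak m}\) matters, so the argument of Lemma \ref{55} goes through verbatim. For the inclusions, the Euclidean property of \(\theta\) on the submonoid gives, by induction on \(\theta\), that the submonoid is contained in \(A_{\mathcal C,\omega}\). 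Theorem \ref{49}(i) gives \(A_{\mathcal C,\omega}\subset\pi_{\mathfrak m}^{-1}(\langle\mathcal C\rangle)\). Finally, if \(\mathcal C\) generates \(Cl_K^{\mathfrak m}\), every integral ideal prime to \(\mathfrak m\) factors into such primes, and Proposition \ref{47} finishes the proof.
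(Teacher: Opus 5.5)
Your proposal follows the paper's proof essentially step for step: a composite common denominator is absorbed by the shift $s+1$ (since $\theta(\mathfrak{d})\ge 2s+4>2s+3\ge\theta(\mathfrak{q})$), and a prime denominator $\mathfrak{p}$ is handled by lowering $\lambda$ via Chebotarev for $\lambda(\mathfrak{p})\ge 3$, Lemma \ref{55} for $\lambda(\mathfrak{p})=2$ and the definition of $A_{C_1,1}$ for $\lambda(\mathfrak{p})=1$, with the inclusions coming from induction on $\theta$ plus Theorem \ref{49}(i). Two small tidy-ups: your ``extra factors'' worry is moot, because every component of a tuple in $I(\mathcal{C},\mathfrak{b})$ has the same denominator as $\tilde{\mathfrak{d}}$ (the $\mathfrak{c}_j$ are prime to $\mathfrak{b}$), so when that denominator is $\mathfrak{p}$ one has $\tilde{\mathfrak{c}}_i\in I(C_i,\mathfrak{p})$ and Lemma \ref{55} applies as stated; and well-definedness of $\lambda$ needs a representation with $3\le\sum n_i\le s+2$, which you get by writing $CC_1^{-2}$ using $s$ and multiplying back by $C_1^2$.
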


\begin{proof}
Let \((1)\neq\mathfrak{b}\in\left<\text{Spec}(\ded_K)\cap\pi_{\mathfrak{m}}^{-1}(\left<\mathcal{C}\right>)\right>\), and let
\[
(\tilde{\mathfrak{c}}_1,\dots,\tilde{\mathfrak{c}}_r)=\left(\frac{\tilde{\mathfrak{a}}_1}{\tilde{\mathfrak{b}}_1},\dots,\frac{\tilde{\mathfrak{a}}_r}{\tilde{\mathfrak{b}}_r}\right)\in I(\mathcal{C},\mathfrak{b}).
\]

Suppose first that, for some \(i\), the denominator \(\tilde{\mathfrak{b}}_i\) is not a prime ideal. Then
\[
\theta(\tilde{\mathfrak{b}}_i)\ge 2s+4.
\]
By the density theorem, we may choose a prime ideal \(\mathfrak{p}_i\in \tilde{\mathfrak{a}}_iP_K^{\tilde{\mathfrak{b}}_i\mathfrak{m}}\). Then
\[
\mathfrak{x}_i=\frac{\mathfrak{p}_i}{\tilde{\mathfrak{b}}_i}
\in S_{\mathfrak{m}}(\tilde{\mathfrak{c}}_i),
\]
and
\[
\theta(\mathfrak{x}_i\mathfrak{b})=\theta(\mathfrak{p}_i)-\theta(\tilde{\mathfrak{b}}_i)+\theta(\mathfrak{b})\le(2s+3)-(2s+4)+\theta(\mathfrak{b})<\theta(\mathfrak{b}).
\]

We may therefore assume that all of \(\tilde{\mathfrak{b}}_1,\dots,\tilde{\mathfrak{b}}_r\) are prime ideals. Write
\[
(\tilde{\mathfrak{c}}_1,\dots,\tilde{\mathfrak{c}}_r)=\left(\prod_{j\neq i}\mathfrak{c}_j\right)_i\tilde{\mathfrak{c}},
\]
where
\[
\left(\prod_{j\neq i}\mathfrak{c}_j\right)_i\in D(\mathcal{C},\mathfrak{b}),~~~\tilde{\mathfrak{c}}\in I(C_1\cdots C_r,\mathfrak{b}).
\]
Then, for every \(i=2,\dots,r\), we have
\[
\frac{\tilde{\mathfrak{a}}_i}{\tilde{\mathfrak{b}}_i}=\frac{\tilde{\mathfrak{a}}_1\mathfrak{c}_1}{\tilde{\mathfrak{b}}_1\mathfrak{c}_i},
\]
where
\[
\tilde{\mathfrak{b}}_1\mid\mathfrak{b},~~~(\mathfrak{c}_1,\mathfrak{b})=1,
\]
and both \(\tilde{\mathfrak{b}}_1\) and \(\tilde{\mathfrak{b}}_i\) are prime ideals. Hence
\[
\tilde{\mathfrak{b}}_i=\tilde{\mathfrak{b}}_1
\]
for all \(i\). In what follows, we write this common prime denominator simply as \(\tilde{\mathfrak{b}}\).

Suppose that
\[
\lambda(\tilde{\mathfrak{b}})=k\ge 4.
\]
Then \(\ds \tilde{\mathfrak b}\in \prod_{i=1}^r C_i^{n_i}\) for some \(\ds n_1\in\mathbb{N}_{>0},~n_2,\dots,n_r\in\mathbb{N},~\sum_{i=1}^r n_i=k\). Since \(k\ge 4\), either there exists \(i=2,\dots,r\) such that \(n_i>0\), or \(n_1>1,~i=1\). Choose such an index \(i\). Then
\[
\tilde{\mathfrak{a}}_i\in[\tilde{\mathfrak{b}}]C_i^{-1}=C_i^{n_i-1}\prod_{j\neq i}C_j^{n_j}.
\]
By the density theorem, there exists a prime ideal
\[
\mathfrak{q}_i\in\text{Int}(\tilde{\mathfrak{a}}_iP_K^{\tilde{\mathfrak{b}}\mathfrak{m}})\subset C_i^{n_i-1}\prod_{j\neq i}C_j^{n_j}.
\]
Then \(\ds \lambda(\mathfrak{q}_i)\le n_i-1+\sum_{j\neq i}n_j<k\). Put
\[
\mathfrak{x}_i=\frac{\mathfrak{q}_i}{\tilde{\mathfrak{b}}}\in S_{\mathfrak{m}}(\tilde{\mathfrak{c}}_i).
\]
Since \(\theta(\mathfrak{q}_i)<\theta(\tilde{\mathfrak{b}})\), we get
\[
\theta(\mathfrak{x}_i\mathfrak{b})=\theta(\mathfrak{q}_i)-\theta(\tilde{\mathfrak{b}})+\theta(\mathfrak{b})<\theta(\mathfrak{b}).
\]

Suppose next that
\[
\lambda(\tilde{\mathfrak{b}})=3.
\]
Then \(\ds \tilde{\mathfrak{b}}\in \prod_{i=1}^r C_i^{n_i}\) for some
\[
n_1\in\mathbb{N}_{>0},~n_2,\dots,n_r\in\mathbb{N},~\sum_{i=1}^r n_i=3.
\]
Again, either there exists \(i=2,\dots,r\) such that \(n_i>0\), or \(n_1>1,~i=1\). Choose such an index \(i\). Then
\[
\tilde{\mathfrak{a}}_i\in [\tilde{\mathfrak{b}}]C_i^{-1}=C_i^{n_i-1}\prod_{j\neq i}C_j^{n_j}\subset \bigcup_{j=1}^r C_1C_j.
\]
By the density theorem, we may choose a prime ideal \(\mathfrak{q}_i\in\text{Int}(\tilde{\mathfrak{a}}_iP_K^{\tilde{\mathfrak{b}}\mathfrak{m}})\) which is prime to every rational prime in \(P_1\) and to \(\#\mu_K\). Then \(\lambda(\mathfrak{q}_i)=2\). Put
\[
\mathfrak{x}_i=\frac{\mathfrak{q}_i}{\tilde{\mathfrak{b}}}\in S_{\mathfrak{m}}(\tilde{\mathfrak{c}}_i).
\]
Since \(\theta(\mathfrak{q}_i)<\theta(\tilde{\mathfrak{b}})\), we get
\[
\theta(\mathfrak{x}_i\mathfrak{b})<\theta(\mathfrak{b}).
\]

Suppose next that
\[
\lambda(\tilde{\mathfrak{b}})=2.
\]
Then there exists \(i=1,\dots,r\) such that \(\tilde{\mathfrak{b}}\in C_1C_i\). By Lemma \ref{55}, there exists
\[
\mathfrak{x}_i\in S_{\mathfrak{m}}(\tilde{\mathfrak{c}}_i)
\]
such that
\[
\mathfrak{x}_i\tilde{\mathfrak{b}}\in A_{C_1,1}.
\]
Hence \(\lambda(\mathfrak{x}_i\tilde{\mathfrak{b}})\le 1\), and therefore
\[
\theta(\mathfrak{x}_i\mathfrak{b})=\theta(\mathfrak{x}_i\tilde{\mathfrak{b}})-\theta(\tilde{\mathfrak{b}})+\theta(\mathfrak{b})<\theta(\mathfrak{b}).
\]

Finally, suppose that
\[
\lambda(\tilde{\mathfrak{b}})=1.
\]
Then \(\tilde{\mathfrak{b}}\in A_{C_1,1}\). Hence there exists \(\mathfrak{x}_1\in S_{\mathfrak{m}}(\tilde{\mathfrak{c}}_1)\) such that
\[
\mathfrak{x}_1\tilde{\mathfrak{b}}=(1).
\]
Thus
\[
\theta(\mathfrak{x}_1\mathfrak{b})=\theta(\mathfrak{x}_1\tilde{\mathfrak{b}})-\theta(\tilde{\mathfrak{b}})+\theta(\mathfrak{b})<\theta(\mathfrak{b}).
\]

We have proved that, for every non-trivial integral ideal
\[
\mathfrak{b}\in\left<\text{Spec}(\ded_K)\cap\pi_{\mathfrak{m}}^{-1}(\left<\mathcal{C}\right>)\right>,
\]
and every \((\tilde{\mathfrak{c}}_1,\dots,\tilde{\mathfrak{c}}_r)\in I(\mathcal{C},\mathfrak{b})\), there exist \(i=1,\dots,r\) and \(\mathfrak{x}_i\in S_{\mathfrak{m}}(\tilde{\mathfrak{c}}_i)\) such that
\[
\theta(\mathfrak{x}_i\mathfrak{b})<\theta(\mathfrak{b}).
\]

For \(n\in\mathbb{N}\), put
\[
\tilde{A}_n=\left\{\mathfrak{b}\in\left<\text{Spec}(\ded_K)\cap\pi_{\mathfrak{m}}^{-1}(\left<\mathcal{C}\right>)\right>~\middle|~\theta(\mathfrak{b})\le n\right\}.
\]
Then \(\tilde{A}_0={(1)}=A_{\mathcal{C},0}\). Assume that \(\tilde{A}_{n-1}\subset A_{\mathcal{C},n-1}\). Let \(\mathfrak{b}\in \tilde{A}_n\). For every \((\tilde{\mathfrak{c}}_1,\dots,\tilde{\mathfrak{c}}_r)\in I(\mathcal{C},\mathfrak{b})\), there exist \(i=1,\dots,r\) and \(\mathfrak{x}_i\in S_{\mathfrak{m}}(\tilde{\mathfrak{c}}_i)\) such that
\[
\theta(\mathfrak{x}_i\mathfrak{b})<\theta(\mathfrak{b})\le n.
\]
Hence \(\mathfrak{x}_i\mathfrak{b}\in \tilde{A}_{n-1}\subset A_{\mathcal{C},n-1}\), and therefore \(\mathfrak{b}\in A_{\mathcal{C},n}\). Thus
\[
\tilde{A}_n\subset A_{\mathcal{C},n}.
\]
By induction, this holds for all \(n\in\mathbb{N}\). Consequently,
\[
\left<\text{Spec}(\ded_K)\cap\pi_{\mathfrak{m}}^{-1}(\left<\mathcal{C}\right>)\right>=\bigcup_{n=0}^{\infty}\tilde{A}_n\subset\bigcup_{n=0}^{\infty}A_{\mathcal{C},n}=A_{\mathcal{C},\omega}.
\]
The inclusion
\[
A_{\mathcal{C},\omega}\subset\pi_{\mathfrak{m}}^{-1}(\left<\mathcal{C}\right>)
\]
follows from (i) in Theorem \ref{49}. If \(\mathcal{C}\) generates \(Cl_K^{\mathfrak{m}}\), then \(\pi_{\mathfrak{m}}^{-1}(\left<\mathcal{C}\right>)=\text{Int}(J_K^{\mathfrak{m}})\), and hence
\[
A_{\mathcal{C},\omega}=\text{Int}(J_K^{\mathfrak{m}}).
\]
Therefore \(\mathcal{C}\) is a Euclidean system, and \(\theta\) is a completely additive Euclidean function for \(\mathcal{C}\).
\\
\end{proof}

\begin{thm}[Reformulation of Theorem 1.2]\label{58} \n
Assume GRH. Let \(K\) be a totally real Galois number field of degree \(n=[K:\mathbb Q]\ge 3\). Let \(p\) be an odd rational prime which does not split completely in \(K\). Let \(N\in\mathbb{N}_{>0},~\mathfrak{m}=(p)^N\). Let \(\mathcal{C}=\{C_1,\dots,C_r\}\subset Cl_K^{\mathfrak{m}}\) be a generating set of the ray class group. Then at least one of \(C_1,\dots,C_r\) is admissible. In particular, \(\mathcal{C}\) is a Euclidean system.
\end{thm}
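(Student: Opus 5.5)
The plan is to prove the first assertion by showing directly that the Lenstra obstruction in Definition \ref{52} reduces to a single condition, and then to deduce the second assertion from Theorem \ref{57}. Suppose some \(C_{i_0}\in\mathcal{C}\) is admissible. Renumber so that \(i_0=1\). Since \(\mathcal{C}\) generates \(Cl_K^{\mathfrak{m}}\), Theorem \ref{57} gives \(A_{\mathcal{C},\omega}=\text{Int}(J_K^{\mathfrak{m}})\), so \(\mathcal{C}\) is a Euclidean system; note that the set \(\mathcal{C}\) is unchanged by the renumbering. Since \(K\) is totally real of degree \(n\ge 3\), \(E^{\mathfrak{m}}\) has rank \(n-1\ge 2\) and is infinite, so the setup preceding Definition \ref{52} applies. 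Everything therefore rests on exhibiting one admissible \(C_i\).

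First I will determine \(P_1\) and \(P_2\) for \(W=E^{\mathfrak{m}}\) and \(\mathfrak{m}=(p)^N\). Since \(p\) is odd, \(-1\not\equiv 1 \pmod{p}\), so \(-1\notin E^{\mathfrak{m}}\). Hence \(E^{\mathfrak{m}}\) is torsion-free of finite index in \(E_K\). It follows that \(P_1\) is finite, consisting of the primes \(\ell\) with \(E^{\mathfrak{m}}\subset E_K^{\ell}\); here I use that \(K\) is real, so \(\pm\) signs cause no trouble for odd \(\ell\). For \(\ell\in P_1\) we have \(L_\ell=K(\zeta_\ell)\). For \(\ell\notin P_1\) I will reuse the Kummer argument from the proof of Lemma \ref{55}: if \(\zeta_\ell\notin K\) and \(E^{\mathfrak{m}}\not\subset (K^\ast)^\ell\), then \(L_\ell/K\) is non-abelian, so \(L_\ell\) cannot lie in the abelian field \(H_K^{\mathfrak{m}}(\zeta_{P_1})\). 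Because \(K\) is real, \(\zeta_\ell\in K\) happens only for \(\ell=2\). So \(P_2\subset P_1\cup\{2\}\), and the only candidates are \(L_\ell=K(\zeta_\ell)\) for odd \(\ell\in P_1\), together with \(L_2=K(\sqrt{E^{\mathfrak{m}}})\) when \(2\in P_2\).

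Next I will decide, for each such obstruction field \(L\), whether \(L\subset H_K^{\mathfrak{m}}\).
\begin{itemize}
\item If \(L\not\subset H_K^{\mathfrak{m}}\), then \(\text{Gal}(H_K^{\mathfrak{m}}(\zeta_{P_1})/K)\) gives freedom in lifting \(\sigma_C\). Using linear disjointness of the cyclotomic pieces at distinct \(\ell\), I will choose the lift to act non-trivially on \(L\) regardless of \(C\).
\item If \(L\subset H_K^{\mathfrak{m}}\), the behaviour of \(\tilde{\sigma}_C\) on \(L\) is dictated by \(\sigma_C\). For \(\ell\ne p\), the field \(K(\zeta_\ell)\) is ramified above \(\ell\), while \(H_K^{\mathfrak{m}}\) is ramified only above \(p\) and at infinity. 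So this case can occur only for \(\ell=p\), or for \(\ell=2\) through the sign character \(K(\sqrt{-1})\) and square roots of units.
\end{itemize}
Here I will use that \(p\) does not split completely and that \(K\) is Galois. These should force \(K(\zeta_p)\ne K\) and pin down \(K(\zeta_p)\cap H_K^{\mathfrak{m}}\). The condition on \(C\) should then become: \(\sigma_C\) lies outside a small number of explicit proper subgroups \(\text{Gal}(H_K^{\mathfrak{m}}/L)\), ideally a single one, \(L_0\).

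Finally, since \(\mathcal{C}\) generates \(Cl_K^{\mathfrak{m}}\cong\text{Gal}(H_K^{\mathfrak{m}}/K)\), not all \(\sigma_{C_i}\) can lie in the proper subgroup \(\text{Gal}(H_K^{\mathfrak{m}}/L_0)\). Hence some \(C_i\) has \(\sigma_{C_i}\) non-trivial on \(L_0\), and this \(C_i\) is admissible.

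The main obstacle is the last step. A generating set avoids any single proper subgroup, but not necessarily a union of several. I therefore expect the real work to be showing that, under the hypotheses (Galois, totally real, \(n\ge 3\), \(p\) odd and not totally split), all the obstruction fields that lie inside \(H_K^{\mathfrak{m}}\) are contained in one field \(L_0\), or are nested in a chain. The first thing I would try is to prove that \(2\notin P_2\), by showing \(K(\sqrt{E^{\mathfrak{m}}})\not\subset H_K^{\mathfrak{m}}(\zeta_{P_1})\), using that the rank of \(E^{\mathfrak{m}}\) is at least \(2\) and the resulting ramification at \(2\). After that, only \(\ell=p\) remains as a constraint coming from \(H_K^{\mathfrak{m}}\).
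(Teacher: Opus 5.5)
Your reduction to exhibiting one admissible $C_i$ (then Theorem 5.7) and the inclusion $P_2\subset P_1\cup\{2\}$ are fine. But the proposal stops exactly where the proof happens: you correctly note that a generating set only escapes a single proper subgroup, and then leave open how to get down to one. The first missing idea is hidden by an error: $H_K^{\mathfrak{m}}$ is \emph{not} ramified at infinity. Since $\mathfrak{m}=(p)^N$ has no infinite part and $P_K^{\mathfrak{m}}$ carries no sign condition, $H_K^{\mathfrak{m}}$ is totally real. Hence $K(\zeta_p)\not\subset H_K^{\mathfrak{m}}$, and complex conjugation gives $J\in\text{Gal}(H_K^{\mathfrak{m}}(\zeta_p)/H_K^{\mathfrak{m}})$ with $J(\zeta_p)=\zeta_p^{-1}$. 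Replacing a lift $\tilde\sigma$ of $\sigma_C$ by $J\tilde\sigma$ when needed makes it nontrivial on $L_p=K(\zeta_p)$ without changing it on $H_K^{\mathfrak{m}}$, so $\ell=p$ never constrains $\sigma_C$. The second missing idea is $P_1\subset\{p\}$, which you never establish. If $\ell\neq p$ and $E^{\mathfrak{m}}\subset E_K^{\ell}$, then $n-1\le\text{rank}_\ell(E_K/E^{\mathfrak{m}})\le\text{rank}_\ell\,(\mathfrak{o}_K/\mathfrak{m})^\ast\le g\le n/2$, which is impossible for $n\ge 3$. This is where ``Galois and not totally split'' enters, to get $g\le n/2$; it is not needed to force $K(\zeta_p)\neq K$, which is automatic for real $K$ and odd $p$. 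Without $P_1\subset\{p\}$, your appeal to linear disjointness of the cyclotomic pieces is unfounded: over $K$ they need not be disjoint (if $\sqrt{21}\in K$ then $K(\zeta_3)=K(\sqrt{-7})\subset K(\zeta_7)$). Several odd $\ell$ together with the coset constraint would then be a genuine covering problem.

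Your proposed way out, proving $2\notin P_2$ via ramification at $2$, has no basis, and the paper does not claim it. Units can generate quadratic extensions unramified at $2$: in $\mathbb{Q}(\sqrt{34})$, with $\varepsilon=35+6\sqrt{34}$, one has $2\varepsilon=(6+\sqrt{34})^2$, so $K(\sqrt{\varepsilon})=K(\sqrt2)$ is the Hilbert class field. What is actually needed is $L_2\cap H_K^{\mathfrak{m}}\neq K$ whenever $2\in P_2$. If $P_1=\emptyset$ this is immediate, since $K\subsetneq L_2\subset H_K^{\mathfrak{m}}$. If $P_1=\{p\}$, then $L_2\subset H_K^{\mathfrak{m}}(\zeta_p)$ and $\text{rank}_2\,\text{Gal}(L_2/K)=\text{rank}_2(E^{\mathfrak{m}}E_K^2/E_K^2)\ge n-g\ge n/2>1$. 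Since $\text{Gal}(H_K^{\mathfrak{m}}(\zeta_p)/H_K^{\mathfrak{m}})$ is cyclic, $L_2\cap H_K^{\mathfrak{m}}=K$ would contradict the compositum theorem. The only condition on $\sigma_{C_i}$ is then to lie outside the single proper subgroup $\text{Gal}(H_K^{\mathfrak{m}}/L_2\cap H_K^{\mathfrak{m}})$, which some generator must do. Every lift of that $\sigma_{C_i}$ is then nontrivial on $L_2$, and $J$ repairs the $\zeta_p$ condition without disturbing $L_2\cap H_K^{\mathfrak{m}}$. Your bullet ``choose the lift regardless of $C$'' treats $L_2\not\subset H_K^{\mathfrak{m}}$ and $L_p$ one at a time. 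When one lift must satisfy both conditions, it is exactly this rank argument that makes the $L_2$ condition depend only on $\sigma_{C_i}$.
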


\begin{proof}
We first prove that
\[
P_1\subset \{p\},~~P_2\subset \{2,p\}.
\]
Let \(\mathfrak{p}_1,\dots,\mathfrak{p}_g\) be the prime ideals of \(K\) above \(p\). Since \(K/\mathbb{Q}\) is Galois and \(p\) does not split completely in \(K\), we have \(\ds g\le \frac{n}{2}\).

Let \(\ell\in P_1\). Since \(E^{\mathfrak{m}}\subset E_K^\ell\), we have
\[
E_K/E^{\mathfrak{m}}\twoheadrightarrow E_K/E_K^\ell.
\]
On the other hand, reduction modulo \(\mathfrak{m}\) gives
\[
E_K/E^{\mathfrak{m}}\hookrightarrow (\ded_K/\mathfrak{m})^\ast\cong\prod_{i=1}^g\left(\kappa_{\mathfrak{p}_i}^{\ast}\oplus U_{\mathfrak{p}_i}^{(1)}/U_{\mathfrak{p}_i}^{(e_p^KN)}
\right).
\]
Moreover,
\[
E_K/E_K^\ell\cong\begin{cases}
(\mathbb{Z}/\ell\mathbb{Z})^{n-1} & (\ell\neq 2) \\ (\mathbb{Z}/2\mathbb{Z})^n & (\ell=2)
\end{cases}.
\]
Suppose that \(\ell\neq p\). Then the \(\ell\)-rank of \(\ds \prod_{i=1}^g\left(\kappa_{\mathfrak p_i}^{\ast}\oplus U_{\mathfrak{p}_i}^{(1)}/U_{\mathfrak{p}_i}^{(e_p^KN)}\right)\) is at most \(g\). Hence
\[
n-1\le \text{rank}_{\ell}(E_K/E_K^\ell)\le \text{rank}_{\ell}(E_K/E^{\mathfrak{m}})\le \text{rank}_{\ell}\left(\prod_{i=1}^g\left(\kappa_{\mathfrak{p}_i}^{\ast}\oplus U_{\mathfrak{p}_i}^{(1)}/U_{\mathfrak{p}_i}^{(e_p^KN)}\right)\right)\le g\le \frac{n}{2}.
\]
This contradicts \(n\ge 3\). Therefore \(P_1\subset\{p\}\).

Next let
\[
\ell\in P_2\setminus P_1.
\]
Then \(K(\zeta_\ell,\sqrt[\ell]{E^{\mathfrak{m}}})\subset H_K^{\mathfrak{m}}(\zeta_{P_1})\). Since \(H_K^{\mathfrak{m}}(\zeta_{P_1})/K\) is abelian, the extension \(K(\zeta_\ell,\sqrt[\ell]{E^{\mathfrak{m}}})/K\) is also abelian. Hence either \(\zeta_\ell\in K\) or \(\sqrt[\ell]{E^{\mathfrak{m}}}\subset K\). Since \(\ell\notin P_1\), we have \(\sqrt[\ell]{E^{\mathfrak{m}}}\not\subset K\). Thus \(\zeta_\ell\in K\). Because \(K\) is totally real, this implies \(\ell=2\). Therefore
\[
P_2\setminus P_1\subset\{2\},
\]
and hence \(P_2\subset\{2,p\}\).

If \(P_2=\emptyset\), then every ray class is admissible, and there is nothing to prove.

Suppose first that
\[
P_1=\emptyset,~~P_2=\{2\}.
\]
Then \(K(\sqrt{E^{\mathfrak{m}}})\subset H_K^{\mathfrak{m}}\). Since \(2\notin P_1\), we have \(K\subsetneq K(\sqrt{E^{\mathfrak{m}}})
\subset H_K^{\mathfrak{m}}\). Since \(C_1,\dots,C_r\) generate \(Cl_K^{\mathfrak m}\), there exists \(i=1,\dots,r\) such that, if
\[
\sigma_i\in\operatorname{Gal}(H_K^{\mathfrak{m}}/K)
\]
is the Artin element corresponding to \(C_i\), then
\[
\sigma_i\notin\text{Gal}(H_K^{\mathfrak{m}}/K(\sqrt{E^{\mathfrak{m}}})).
\]
Therefore \(C_i\) is admissible.

Suppose next that
\[
P_1=P_2=\{p\}.
\]
Let \(\sigma_1\in\text{Gal}(H_K^{\mathfrak m}/K)\) be the Artin element corresponding to \(C_1\), and choose an extension \(\tilde{\sigma}_1\in\text{Gal}(H_K^{\mathfrak m}(\zeta_p)/K)\) of \(\sigma_1\). If \(\tilde{\sigma}_1\notin\text{Gal}(H_K^{\mathfrak{m}}(\zeta_p)/K(\zeta_p))\), then \(C_1\) is admissible. Otherwise, \(\tilde{\sigma}_1\in\text{Gal}(H_K^{\mathfrak{m}}(\zeta_p)/K(\zeta_p))\). Since \(K\) is totally real, the ray class field \(H_K^{\mathfrak{m}}\) is also totally real. Hence there exists \(1\neq J\in\text{Gal}(H_K^{\mathfrak{m}}(\zeta_p)/H_K^{\mathfrak{m}}(\zeta_p+\zeta_p^{-1}))\) such that \(J(\zeta_p)=\zeta_p^{-1}\). Then
\[
J\tilde{\sigma}_1\notin\text{Gal}(H_K^{\mathfrak{m}}(\zeta_p)/K(\zeta_p)),
\]
and
\[
(J\tilde{\sigma}_1)|_{H_K^{\mathfrak{m}}}=\sigma_1.
\]
Thus \(C_1\) is admissible.

It remains to consider the case
\[
P_1=\{p\},~~P_2=\{2,p\}.
\]
Suppose, to the contrary, that \(K(\sqrt{E^{\mathfrak m}})\cap H_K^{\mathfrak m}=K\). As above, reduction modulo \(\mathfrak{m}\) gives
\[
E_K/E^{\mathfrak m}\hookrightarrow (\ded_K/\mathfrak{m})^\ast\cong\prod_{i=1}^g\left(\kappa_{\mathfrak{p}_i}^{\ast}\oplus U_{\mathfrak{p}_i}^{(1)}/U_{\mathfrak{p}_i}^{(e_p^KN)}\right),
\]
and
\[
E_K/E_K^2\cong (\mathbb{Z}/2\mathbb{Z})^n.
\]
Therefore
\begin{align*}
\text{rank}_2\text{Gal}(K(\sqrt{E^{\mathfrak{m}}})/K)&=\text{rank}_2(E^{\mathfrak{m}}E_K^2/E_K^2) \\
&\ge\text{rank}_2(E_K/E_K^2)-\text{rank}_2(E_K/E^{\mathfrak{m}})\ge n-g\ge\frac{n}{2}>1
\end{align*}
By the compositum theorem, we obtain
\begin{align*}
1=\text{rank}_2\text{Gal}(H_K^{\mathfrak{m}}(\zeta_p)/H_K^{\mathfrak{m}})\ge\text{rank}_2\text{Gal}(H_K^{\mathfrak{m}}(\sqrt{E^{\mathfrak{m}}})/H_K^{\mathfrak{m}})=\text{rank}_2\text{Gal}(K(\sqrt{E^{\mathfrak{m}}})/K)>1,
\end{align*}
which is a contradiction. Hence
\[
K\subsetneq K(\sqrt{E^{\mathfrak{m}}})\cap H_K^{\mathfrak{m}}\subset H_K^{\mathfrak{m}}.
\]

Since \(C_1,\dots,C_r\) generate \(Cl_K^{\mathfrak{m}}\), there exists \(i=1,\dots,r\) such that, if
\[
\sigma_i\in\text{Gal}(H_K^{\mathfrak{m}}/K)
\]
is the Artin element corresponding to \(C_i\), then
\[
\sigma_i\notin\text{Gal}(H_K^{\mathfrak{m}}/H_K^{\mathfrak{m}}\cap K(\sqrt{E^{\mathfrak{m}}})).
\]
Choose an extension \(\tilde{\sigma}_i\in\text{Gal}(H_K^{\mathfrak{m}}(\zeta_p)/K)\) of \(\sigma_i\). If
\[
\tilde{\sigma}_i\notin\text{Gal}(H_K^{\mathfrak{m}}(\zeta_p)/K(\zeta_p)),
\]
then 
\[
\tilde{\sigma}_i|_{K(\zeta_p)}\neq \id,~~\tilde{\sigma}_i|_{K(\sqrt{E^{\mathfrak{m}}})}\neq \id,~~\tilde{\sigma}_i|_{H_K^{\mathfrak{m}}}=\sigma_i.
\]
Thus \(C_i\) is admissible.

Otherwise, \(\tilde{\sigma}_i\in\text{Gal}(H_K^{\mathfrak{m}}(\zeta_p)/K(\zeta_p))\). Since \(K\) is totally real, \(H_K^{\mathfrak{m}}\) is totally real. Hence there exists \(1\neq J\in\text{Gal}(H_K^{\mathfrak{m}}(\zeta_p)/H_K^{\mathfrak{m}}(\zeta_p+\zeta_p^{-1}))\) such that \(J(\zeta_p)=\zeta_p^{-1}\). Then
\[
(J\tilde{\sigma}_i)|_{K(\zeta_p)}=J|_{K(\zeta_p)}\neq \id,~~(J\tilde{\sigma}_i)|_{K(\sqrt{E^{\mathfrak{m}}})}\neq \id,~~(J\tilde{\sigma}_i)|_{H_K^{\mathfrak{m}}}=\sigma_i.
\]
Thus \(C_i\) is admissible.

We have shown that at least one of \(C_1,\dots,C_r\) is admissible. By Theorem \ref{57}, the generating set \(\mathcal{C}\) is a Euclidean system.
\\
\end{proof}

\bibliographystyle{plain}
\bibliography{refer}

\end{document}